\bmdefine{\sss}{s}
\bmdefine{\vvv}{v}
\newcommand{\NNN}{\mathbb{N}}
\newcommand{\ZZZ}{\mathbb{Z}}
\newcommand{\QQQ}{\mathbb{Q}}
\newcommand{\RRR}{\mathbb{R}}
\newcommand{\KKK}{\mathbb{K}}
\newcommand{\mmmm}{\mathfrak{m}}
\newcommand{\RRRRR}{{\mathcal R}}
\newcommand{\TTTTT}{{\mathcal T}}
\newcommand{\CCCCC}{{\mathcal C}}
\newcommand{\FFFFF}{{\mathcal F}}
\newcommand{\TTTTTn}{{\mathcal T}^{(n)}}
\newcommand{\TTTTTe}{{\mathcal T}^{(\epsilon)}}
\newcommand{\covers}{\mathrel{\cdot\!\!\!>}}
\newcommand{\covered}{\mathrel{<\!\!\!\cdot}}
\newcommand{\define}{\mathrel{:=}}
\newcommand{\gor}{Gorenstein}
\newcommand{\cm}{Cohen-Macaulay}
\newcommand{\sr}{Stanley-Reisner}
\newcommand{\joinirred}{join-irreducible}
\newcommand{\rank}{\mathrm{rank}}
\newcommand{\meet}{\wedge}
\newcommand{\join}{\vee}
\newcommand{\bigjoin}{\bigvee}
\newcommand{\Div}{{\mathrm{Div}}}
\renewcommand{\hom}{{\mathrm{Hom}}}
\newcommand{\interior}{{\mathrm{int}}}
\newcommand{\prehat}[1]{{}^{#1}}
\newcommand{\condn}{{condition N}}
\newcommand{\scn}{{sequence with condition N}}
\newcommand{\sscn}{{sequences with condition N}}
\newcommand{\nudown}{\nu^{\downarrow}}
\newcommand{\nuup}{\nu^{\uparrow}}
\newcommand{\dwfg}{{degree-wise finitely generated}}
\newcommand{\dist}{{\mathrm{dist}}}
\newcommand{\qnmax}{{q^{(n)}_{\max}}}
\newcommand{\qdist}[1]{{q^{(#1)}\dist}}
\newcommand{\qndist}{{q^{(n)}\dist}}
\newcommand{\qmdist}{{q^{(m)}\dist}}
\newcommand{\qedist}{q^{(\epsilon)}\dist}
\newcommand{\qnedist}{q^{(n\epsilon)}\dist}
\newcommand{\qmodist}{q^{(-1)}\dist}
\newcommand{\qn}{q^{(n)}}
\newcommand{\qm}{q^{(m)}}
\newcommand{\qone}{q^{(1)}}
\newcommand{\qmone}{q^{(-1)}}
\newcommand{\qe}{q^{(\epsilon)}}
\newcommand{\qne}{q^{(n\epsilon)}}
\newcommand{\qnred}{$\qn$-reduced}
\newcommand{\qonered}{$\qone$-reduced}
\newcommand{\qmonered}{$\qmone$-reduced}
\newcommand{\qered}{$q^{(\epsilon)}$-reduced}
\newcommand{\qnered}{$q^{(n\epsilon)}$-reduced}
\newcommand{\rkh}{\RRRRR_\KKK[H]}
\newcommand{\cx}{\mathrm{cx}}
\newcommand{\cxf}{\mathrm{cx}_F}
\newcommand{\tcx}{\mathrm{Tcx}}
\newcommand{\pnonmin}{P_\mathrm{nonmin}}
\newcommand{\pnonmax}{P_\mathrm{nonmax}}
\newtheorem{thm}{Theorem}[section]
\newtheorem{fact}[thm]{Fact}
\newtheorem{example}[thm]{Example}
\newtheorem{lemma}[thm]{Lemma}
\newtheorem{cor}[thm]{Corollary}
\newtheorem{definition}[thm]{Definition}
\newtheorem{remark}[thm]{Remark}
\newtheorem{prob}[thm]{Problem}
\newcommand{\bigzerou}{\smash{\lower1.7ex\hbox{\bg 0}}}
\newcommand{\bigastu}{\smash{\lower1.7ex\hbox{\bg *}}}
\newcommand{\refeq}[1]{(\ref{#1})}
\numberwithin{equation}{section}
\newcommand{\mylabel}[1]{{\label{#1}\tt [#1]}}
\let\mylabel=\label
\title{%
Fiber cones, analytic spreads of the canonical and anticanonical
ideals 
and limit Frobenius complexity
of Hibi rings%
}
\author{Mitsuhiro MIYAZAKI\footnote{%
The author is supported partially by 
JSPS KAKENHI Grant Number JP15K04818.}%
}
\date{\normalsize
Department of Mathematics, Kyoto University of Education,\\
1 Fujinomori, Fukakusa, Fushimi-ku, Kyoto, 612-8522, Japan}
\begin{document}


\maketitle

\sloppy

\begin{abstract}
Let $\rkh$ be the Hibi ring over a field $\KKK$ on a finite distributive lattice
$H$, $P$ the set of \joinirred\ elements of $H$ and $\omega$ the canonical ideal of $\rkh$.
We show the powers $\omega^{(n)}$ of $\omega$ in the group of divisors 
$\Div(\rkh)$ is identical with the ordinary powers of $\omega$, 
describe the $\KKK$-vector space basis of $\omega^{(n)}$ for $n\in\ZZZ$.
Further, we show that the fiber cones
$\bigoplus_{n\geq 0}\omega^n/\mmmm\omega^n$ and 
$\bigoplus_{n\geq0}(\omega^{(-1)})^n/\mmmm(\omega^{(-1)})^n$
of $\omega$ and $\omega^{(-1)}$ are sum of the Ehrhart rings,
 defined by sequences of elements of $P$ with a certain condition,
which are polytopal complex version of \sr\ rings.
Moreover, we show that the analytic spread of $\omega$ and $\omega^{(-1)}$ are
maximum of the dimensions of these Ehrhart rings.
Using these facts, we show that the question of Page about Frobenius complexity is
affirmative:
$\lim_{p\to\infty}\cxf(\rkh)=\dim(\bigoplus_{n\geq0}\omega^{(-n)}/\mmmm\omega^{(-n)})-1$,
where $p$ is the characteristic of the field $\KKK$.
\\
Key Words:
Frobenius complexity, Hibi ring, analytic spread, fiber cone, Ehrhart ring
\\
MSC:
13A35, 13A30, 13F50, 06A07
\end{abstract}

\section{Introduction}

Lyubeznik and Smith \cite{ls} defined the ring of Frobenius operators:
let $R$ be a commutative ring with prime characteristic $p$ and $M$ an 
$R$-module.
Let $\prehat e M$ denote the $R$-module whose additive module structure
is that of $M$ and the action of $R$ is defined by $e$-times iterated 
Frobenius map: $r\cdot m=r^{p^e}m$, where the right hand side is 
the original action of $R$ on $M$.
$\hom_R(M,\prehat e M)$ is an additive group, which is denoted
$\FFFFF^e(M)$.
Since for any $\varphi\in\hom_R(M,\prehat e M)$ and
$\phi\in\hom(M,\prehat {e'} M)$, $\phi\circ\varphi\in\hom_R(M,\prehat {e+e'}M)$,
we see that $\bigoplus_{e\geq0}\FFFFF^e(M)$ has a structure of noncommutative
ring which is denoted $\FFFFF(M)$ and called the ring of Frobenius operators
on $M$ in \cite{ls}.

In \cite{ls}, they studied the relation of finite generation of $\FFFFF(M)$
over $\FFFFF^0(M)$ and well behavior of tight closure operation,
e.g.\ commutativity of tight closure of an ideal and 
localization of a ring.
Despite the fact that it is now known that tight closure does not
commute with localization \cite{bm},
problem of finite generation of $\FFFFF(M)$, especially the case where
$R$ is a local ring and $M$ is the injective hull $E$ of the residue field of $R$
is important: see \cite{kssz}.
Moreover, Enescu and Yao \cite{ey1} defined the Frobenius complexity of a local
ring by taking $\log_p$ of the complexity of $\FFFFF(E)$:
the complexity of an $\NNN$-graded ring is a measure of infinite generation 
over its degree 0 part.
They took $\log_p$ in the definition of Frobenius complexity because there is
substantial evidence that, in important cases, there is a limit as $p\to \infty$
of Frobenius complexity.
Above all they showed in \cite{ey2} that if $m>n\geq 2$, then the determinantal
ring obtained by modding out the 2-minors of an $m\times n$ matrix of indeterminates with
base field prime characteristic $p$ has limit Frobenius complexity $m-1$ as $p\to\infty$.

Page \cite{pag} generalized this result to non-\gor\ anticanonical level Hibi rings:
let $\rkh$ be  a Hibi ring over a field $\KKK$ of characteristic $p$ on 
a distributive lattice $H$ and $P$ the set of \joinirred\ elements of $H$.
Then the Frobenius complexity of $\rkh$ approaches to $\#\pnonmin$ as $p\to\infty$,
where $\pnonmin=\{z\in P\mid z$ is not in any maximal chain of $P$ of minimal length$\}$.
See Definition \ref{def:anticanonical level} for the definition of anticanonical level 
property.

In the case where $\rkh$ is anticanonical level, $\#\pnonmin$ is equal to the 
analytic spread of $\omega^{(-1)}$ minus 1, where $\omega$ is the canonical module
of $\rkh$ and $\omega^{(-1)}$ is the inverse element of $\omega$ in $\Div(\rkh)$.
Thus, Page raised a question if the limit of Frobenius complexity of an arbitrary
non-\gor\ Hibi ring is equal to the analytic spread of $\omega^{(-1)}$ minus 1
as $p\to\infty$ \cite[Question 5.1]{pag}.

The main purpose of this paper is to answer this question affirmatively
(see Theorem \ref{thm:f comp}).
In order to accomplish this task, we first analyze the fiber cone of $\omega^{(-1)}$.
Since the treatment is the same for $\omega$, we study the fiber cones of $\omega$
and $\omega^{(-1)}$ simultaneously.
We show that the fiber cone of $\omega$ (resp.\ $\omega^{(-1)}$) is a finite sum
of Ehrhart rings each of which is defined by a certain 
``\scn" (see Definition \ref{def:condn})
and express the analytic spread of $\omega$ (resp.\ $\omega^{(-1)}$) by the
dimensions of the Ehrhart rings defined by these sequences.
This expression, which is described by a polytopal complex, is interesting in its own right.
After this, we show that the Frobenius complexity of $\rkh$ approaches to the analytic
spread of $\omega^{(-1)}$ minus 1 by using the expression above.

This paper is organized as follows.
First 
in \S \ref{sec:pre},
we recall the definition and basic facts of Hibi rings,
study the $n$-th power $\omega^{(n)}$
of the canonical ideal of $\rkh$ in $\Div(\rkh)$, where $n\in\ZZZ$ and $\rkh$ is the Hibi
ring over $\KKK$ on a finite distributive lattice $H$.
We describe Laurent monomials in $\omega^{(n)}$ for $n\in\ZZZ$ and show that for $n>0$,
$\omega^{(n)}=\omega^n$ and $\omega^{(-n)}=(\omega^{(-1)})^n$.
See Theorem \ref{thm:omega n basis}.
Though these results are obtained by Page \cite[Corollary 3.1 and Proposition 3.2]{pag}
for the case of negative powers, our proof is more down to earth 
and treat the cases of positive and negative powers simultaneously.

Next in \S \ref{sec:qnred}, we recall the notion of a \scn\ \cite[Definition 3.1]{mo},
define the notion of \qnred\ \scn,
where $n\in\ZZZ$.
See Definitions \ref{def:condn} and \ref{def:qnred}.
We show that the Laurent monomial $\prod_{x}T_x^{\nu(x)}$, where $\nu$ is a map from
the set $P$ of \joinirred\ elements of $H$ to $\ZZZ$, is a generator of 
$\omega^{(n)}$ if there is a \qnred\ \scn\ with a certain 
condition related to $\nu$.
Conversely, we construct for each \qnred\ \scn, maps $\nudown$ and $\nuup$ form $P$ to $\ZZZ$ 
such that the Laurent monomials $\prod_{x\in P}T_x^{\nudown(x)}$ and $\prod_{x\in P}T_x^{\nuup(x)}$
are generators of $\omega^{(n)}$.
From this, we deduce that $\rkh$ is level (resp.\ anticanonical level) if and only if 
\qonered\ (resp. \qmonered) \scn\ is the empty sequence only.
Further, we show the degrees of the generators of $\omega^{(n)}$ are consecutive integers,
i.e., if there are generators of degrees $d_1$ and $d_2$ of $\omega^{(n)}$ with $d_1<d_2$,
then for any integer $d$ with $d_1\leq d\leq d_2$, 
there is a generator of $\omega^{(n)}$ with degree $d$.

After these preparations, 
we define in
\S \ref{sec:conv polytope},
for each \qered\ 
\scn\ an integral convex polytope whose
Ehrhart ring is standard, i.e., generated by elements of degree 1 over the base field,
where $\epsilon=\pm1$.
We express the dimension of this convex polytope by the word of poset and the
\qered\ \scn\ which defines this convex polytope.
As a special case, we show that if the \qonered\ (resp.\ \qmonered) 
sequence under consideration is an 
empty sequence, then the dimension of this convex polytope is
$\#\pnonmax$ (resp.\ $\#\pnonmin$), where
$\pnonmax=\{z\in P\mid z$ is not in any 
chain of $P$ of maximal length$\}$.

In \S \ref{sec:anal spread},
we show that the Ehrhart ring defined by the convex polytope above is 
isomorphic to a graded subalgebra of the fiber cone
$\bigoplus_{n\geq 0}\omega^n/\mmmm\omega^n$ (resp.\ 
$\bigoplus_{n\geq0}(\omega^{(-1})^n/\mmmm(\omega^{(-1)})^n)$) of $\omega$ (resp.\ $\omega^{(-1)}$)
if $\epsilon=1$ (resp.\ $\epsilon=-1$),
where $\mmmm$ is the irrelevant maximal ideal of $\rkh$.
Further, we show that 
$\bigoplus_{n\geq 0}\omega^n/\mmmm\omega^n$ (resp.\ 
$\bigoplus_{n\geq0}(\omega^{(-1})^n/\mmmm(\omega^{(-1)})^n)$) 
is the sum of finite number of these types of subalgebras.
Since the dimension of a graded ring is computed by the Hilbert function,
we conclude that the analytic spread of $\omega$ (resp. $\omega^{(-1)}$) is the maximum of the
dimensions of these Ehrhart rings.
We also note that glueing of these Ehrhart rings in
$\bigoplus_{n\geq 0}\omega^n/\mmmm\omega^n$ (resp.\ 
$\bigoplus_{n\geq0}(\omega^{(-1})^n/\mmmm(\omega^{(-1)})^n)$) 
is a generalization of \sr\ rings to polytopal complexes.

In \S \ref{sec:comp}, we recall the definition of complexity of
(not necessarily commutative) $\NNN$-graded ring and Frobenius complexity.
We also define the notion of strong left $R$-skew algebra and show that 
if $A=\bigoplus_{n\geq0}A_n$ is a strong left $A_0$-skew algebra and $A_0$ is a commutative
local ring with maximal ideal $\mmmm$, then $\mmmm A$ is a graded two sided ideal of $A$
and the complexity of $A$ and $A/\mmmm A$ coincide.

In \S \ref{sec:t const}, we recall the operation T-construction defined by
Katzman et al.\ \cite{kssz} and define the T-complexity of a commutative 
$\NNN$-graded ring of characteristic $p$.
By the result of Katzman et al.\ \cite[Theorem 3.3]{kssz} and the results of previous
sections, we see that the Frobenius complexity of a Hibi ring can be computed by
the T-complexities of Ehrhart rings appeared in \S \ref{sec:anal spread}.
We state key lemmas to compute the limit T-complexity of Ehrhart rings.

Finally in \S \ref{sec:frob}, by using the results up to the previous section,
we show that the Frobenius complexities of Hibi rings approaches to analytic spread
of the anticanonical ideal minus 1.


\section{Posets and Hibi rings}
\mylabel{sec:pre}

In this paper, all rings and algebras are assumed to have identity element
and, up to \S \ref{sec:anal spread}, assumed to 
be commutative 
unless stated otherwise.
We also assume that a ring homomorphism maps the identity element to the identity element.
We denote by $\NNN$ the set of nonnegative integers, by
$\ZZZ$ the set of integers, by
$\RRR$ the set of real numbers 
by $\RRR_{>0}$ the set of positive real numbers and
by $\RRR_{\geq0}$ the set of nonnegative real numbers.
We use letter $p$ to express a prime number.

We denote the cardinality of a set $X$ by $\#X$.
For two sets $X$ and $Y$, we denote by $X\setminus Y$ the set 
$\{x\in X\mid x\not\in Y\}$.
We use this notation not only the case where $X\supset Y$ but also the case
where $X\not\supset Y$.
We denote the set of maps from $X$ to $Y$ by $Y^X$.
If $X$ is a finite set, we identify $\RRR^X$ the Euclidean space $\RRR^{\#X}$.

Next we recall some definitions concerning finite partially
ordered sets (poset for short).
Let $Q$ be a finite poset.
A chain in $Q$ is a totally ordered subset of $Q$.
For a chain $X$ in $Q$, we define the length of $X$ as $\#X-1$.
The maximum length of chains in $Q$ is called the rank of $Q$ and denoted  $\rank Q$.
If every maximal chain of $Q$ has the same length, we say that $Q$ is pure.
If $I\subset Q$ and
$x\in I$, $y\in Q$, $y\leq x\Rightarrow y\in I$,
then we say that $I$ is a poset ideal of $Q$.
If $x$, $y\in Q$, $x<y$ and there is no $z\in Q$ with $x<z<y$,
we say that $y$ covers $x$ and denote 
$x\covered y$ or $y\covers x$.
For $x$, $y\in Q$ with $x\leq y$, we set
$[x,y]_Q\define\{z\in Q\mid x\leq z\leq y\}$.
We denote $[x,y]_Q$ 
as $[x,y]$ 
if there is no fear of confusion.
Let $\infty$ be a new element which is not contained in $Q$.
We define a new poset $Q^+$ whose base set is $Q\cup\{\infty\}$ and
$x<y$ in $Q^+$ if and only if $x$, $y\in Q$ and $x<y$ in $Q$
or $x\in Q$ and $y=\infty$.

\begin{definition}\rm
Let $Q$ be an arbitrary poset and let
$x$ and $y$ be elements of $Q$ with $x\leq y$.
A saturated chain from $x$ to $y$ is a sequence of elements 
$z_0$, $z_1$, \ldots, $z_t$  of $Q$ such that
$$
x=z_0\covered z_1\covered \cdots\covered z_t=y.
$$
\end{definition}
Note that the length of the chain $z_0$, $z_1$, \ldots, $z_t$ is $t$.

\begin{definition}\rm
\mylabel{def:dist}
Let $Q$, $x$ and $y$ be as above.
We define 
$\dist(x,y)\define\min\{t\mid$ there is a saturated chain from $x$ to $y$
with length $t.\}$
and call $\dist(x,y)$ the distance of $x$ and $y$.
Further, for $n\in \ZZZ$, we define 
$\qndist(x,y)\define\max\{nt\mid$
there is a saturated chain from $x$ to $y$
with length $t.\}$
and call $\qndist(x,y)$ the $n$-th quasi-distance of $x$ and $y$.
\end{definition}
Note that $\qdist{-1}(x,y)=-\dist(x,y)$ and
$\qdist{1}(x,y)=\rank([x,y])$.
Note also that $\qndist(x,z)+\qndist(z,y)\leq\qndist(x,y)$
for any $x$, $z$, $y$ with $x\leq z\leq y$.
Further, $\qndist(x,y)=n$ if $x\covered y$,
$\qndist(x,x)=0$ and $\qdist{mn}(x,y)=m\qndist(x,y)$ for any positive integer $m$.

\begin{definition}\rm
\mylabel{def:tn}
For a poset $Q$ and $n\in \ZZZ$, we set $\TTTTTn(Q)\define\{
\nu\colon Q^+\to\ZZZ\mid \nu(\infty)=0$,
$\nu(x)-\nu(y)\geq n$ if $x\covered y$ in $Q^+\}$.
\end{definition}
Note that if $x$ is a maximal element of $Q$ and $\nu\in\TTTTTn(Q)$, then $\nu(x)\geq n$
since $x\covered \infty$ in $Q^+$.
Note also that if $\nu\in\TTTTTn(Q)$, $x$, $y\in Q^+$ and $x\leq y$, then
$\nu(x)-\nu(y)\geq\qndist(x,y)$.

In the following, we identify a map $\nu\colon Q^+\to \RRR$ with $\nu(\infty)=0$ with
the restriction $\nu\mid_Q\colon Q\to \RRR$.

Next we define operations of maps from a set to $\ZZZ$.

\begin{definition}\rm
Let $X$ be a set.
For $\nu$, $\nu'\in \ZZZ^X$ and a positive integer $n$, we define
maps $\nu\pm\nu'$, $\max\{\nu,\nu'\}$, $\min\{\nu,\nu'\}$, $n\nu$ and 
$\lfloor\frac{\nu}{n}\rfloor\in\ZZZ^X$
by
$(\nu\pm\nu')(x)=\nu(x)\pm\nu'(x)$,
$\max\{\nu,\nu'\}(x)=\max\{\nu(x),\nu'(x)\}$,
$\min\{\nu,\nu'\}(x)=\min\{\nu(x),\nu'(x)\}$,
$(n\nu)(x)=n\nu(x)$ and
$\lfloor\frac{\nu}{n}\rfloor(x)=\lfloor\frac{\nu(x)}{n}\rfloor$
for $x\in X$, where $\lfloor\frac{\nu(x)}{n}\rfloor$ is the largest integer
not exceeding $\frac{\nu(x)}{n}$.
\end{definition}
We note the following fact which is easily proved.

\begin{lemma}
\mylabel{lem:op nu}
Let $m$, $\ell$ be integers 
and $n$ an integer greater than 1.
Suppose that $\nu_1$, $\nu'_1\in\TTTTT^{(m)}(Q)$,
$\nu_2\in\TTTTT^{(\ell)}(Q)$, $\nu\in\TTTTTn(Q)$
and $\nu'\in\TTTTT^{(-n)}(Q)$.
Then $\nu_1+\nu_2\in\TTTTT^{(m+\ell)}(Q)$,
$\max\{\nu_1,\nu'_1\}$,
$\min\{\nu_1,\nu'_1\}\in\TTTTT^{(m)}(Q)$,
$n\nu_1\in\TTTTT^{(nm)}(Q)$,
$\lfloor\frac{\nu}{n}\rfloor\in\TTTTT^{(1)}(Q)$,
$\nu-\lfloor\frac{\nu}{n}\rfloor\in\TTTTT^{(n-1)}(Q)$.
$\lfloor\frac{\nu'}{n}\rfloor\in\TTTTT^{(-1)}(Q)$ and
$\nu'-\lfloor\frac{\nu'}{n}\rfloor\in\TTTTT^{(-n+1)}(Q)$.
\end{lemma}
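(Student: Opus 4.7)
The plan is to verify each of the seven memberships directly from Definition \ref{def:tn}: that the function vanishes at $\infty$, and that $\nu(x)-\nu(y)\geq k$ for every cover $x\covered y$ in $Q^+$, for the appropriate value of $k$. Since every function in the hypotheses takes value $0$ at $\infty$ and each operation appearing in the statement ($+$, $-$, $\max$, $\min$, scalar multiplication, $\lfloor\cdot/n\rfloor$) sends $0$ to $0$, the first condition is automatic throughout; so the work lies entirely in checking the cover inequalities.

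For $\nu_1+\nu_2$, $\max\{\nu_1,\nu_1'\}$, $\min\{\nu_1,\nu_1'\}$, and $n\nu_1$, the inequalities follow at once: add the defining inequalities; note that $\max\{\nu_1(x),\nu_1'(x)\}$ is at least both $\nu_1(y)+m$ and $\nu_1'(y)+m$ and hence at least $\max\{\nu_1(y),\nu_1'(y)\}+m$, with the dual argument for $\min$; and multiply by $n$. For $\lfloor\nu/n\rfloor\in\TTTTT^{(1)}(Q)$ with $\nu\in\TTTTT^{(n)}(Q)$, the hypothesis $\nu(x)\geq\nu(y)+n$ gives $\nu(x)/n\geq\nu(y)/n+1$, and the identity $\lfloor a+1\rfloor=\lfloor a\rfloor+1$ (valid for any $a\in\RRR$) yields $\lfloor\nu(x)/n\rfloor\geq\lfloor\nu(y)/n\rfloor+1$; applied to $\nu'\in\TTTTT^{(-n)}(Q)$, the same argument shows $\lfloor\nu'/n\rfloor\in\TTTTT^{(-1)}(Q)$.

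The main obstacle lies in the two remainder-type statements $\nu-\lfloor\nu/n\rfloor\in\TTTTT^{(n-1)}(Q)$ and $\nu'-\lfloor\nu'/n\rfloor\in\TTTTT^{(-n+1)}(Q)$, where the floor arithmetic needs an explicit calculation. My approach is to set $d=\nu(x)-\nu(y)$, write $\nu(y)=qn+r$ with $0\leq r\leq n-1$, so that $\lfloor\nu(x)/n\rfloor-\lfloor\nu(y)/n\rfloor=\lfloor(r+d)/n\rfloor$, and then to reduce the target inequality to an integer comparison among $d$, $r$, and the residue $b$ of $r+d$ modulo $n$. A short case split on whether $b\geq r$ (the case $b<r$ absorbing the slack in $d\geq n$ via the factor $n-1$) closes the positive case, and the same computation with sign changes handles the negative one. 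The bookkeeping with floors is the only real subtlety; once the decomposition above is in place, the rest is routine.
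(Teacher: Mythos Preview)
Your proof is correct. The paper itself does not supply a proof of this lemma: it simply introduces it with ``We note the following fact which is easily proved'' and moves on. Your direct verification from Definition~\ref{def:tn} is precisely the intended elementary argument, and your handling of the two floor--remainder statements via the decomposition $\nu(y)=qn+r$, $r+d=an+b$ and the case split on $b\gtrless r$ is clean and complete. (One could shorten the last two items slightly by noting that already $\lfloor\nu/n\rfloor\in\TTTTT^{(1)}(Q)$ and $\nu\in\TTTTT^{(n)}(Q)$ do \emph{not} directly give $\nu-\lfloor\nu/n\rfloor\in\TTTTT^{(n-1)}(Q)$ by subtraction, since subtraction is not among the listed operations; so an explicit computation like yours is indeed required.)
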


Here we note the following fact.
Let $R$ be a Noetherian normal domain and $I$ a fractional ideal.
$I$ is said to be divisorial if $R:_{Q(R)}(R:_{Q(R)}I)=I$, i.e.,
$I$ is reflexive as an $R$-module,
where $Q(R)$ is the fraction field of $R$.
It is known that the set of divisorial ideals form a group,
denoted $\Div(R)$, by the operation
$I\cdot J\define R:_{Q(R)}(R:_{Q(R)}IJ)$.
We denote the $n$-th power of $I$ in this group $I^{(n)}$, 
where $n\in\ZZZ$.
Note that if $I\subsetneq R$, then $I^{(n)}$ is identical with the 
$n$-th symbolic power of $I$.
Note also that the inverse element of $I$ in $\Div(R)$ is $R:_{Q(R)}I$.

Suppose further that $R$ is a 
standard graded ring over a field $\KKK$
(resp.\ 
affine semigroup ring
generated by Laurent monomials 
in the Laurent polynomial ring $\KKK[X_1^{\pm1}, \ldots, X_s^{\pm1}]$
over $\KKK$
with weight so that $R$ is a standard graded ring,
where $\KKK$ is a field and $X_1$, \ldots, $X_s$ are indeterminates).
Let $I$ be a divisorial ideal generated by 
homogeneous elements
(resp.\ 
Laurent monomials)
$m_1$, \ldots, $m_\ell$.
Then $R:_{Q(R)}I=\bigcap_{i=1}^\ell Rm_i^{-1}$.
Thus, $R:_{Q(R)}I$ is an $R$-submodule of 
$S^{-1}R$ generated by homogeneous elements, 
where $S=\{x\in R\mid x\neq0$, $x$ is a homogeneous element$\}$
(resp.\
of $\KKK[X_1^{\pm1},\ldots, X_s^{\pm1}]$
generated by Laurent monomials).
Therefore, the set of divisorial ideals generated by 
homogeneous elements in $S^{-1}R$
(resp.\
Laurent monomials)
 form a subgroup of $\Div(R)$.
It is known that the canonical module is reflexive and isomorphic to an ideal.
Therefore
$\omega\in\Div(R)$.
Thus, if the  canonical module $\omega$ of
$R$ is isomorphic to an ideal of $R$ generated by 
homogeneous elements
(resp.\
Laurent monomials),
the inverse element $\omega^{(-1)}$ of $\omega$ in $\Div(R)$ 
is also an $R$-submodule
of $S^{-1}R$
(resp.\
$\KKK[X_1^{\pm1},\ldots, X_s^{\pm1}]$)
generated by 
homogeneous elements in $S^{-1}R$
(resp.\
Laurent monomials).

Taking into account of this fact,
we recall the definition of level (resp.\ anticanonical level) property.

\begin{definition}[\cite{sta1,pag}]\rm
\mylabel{def:anticanonical level}
Let $R$ be a standard graded \cm\ algebra over a field.
If the degree of all the generators of the canonical module $\omega$ of $R$ are the same,
then we say that $R$ is level.
Moreover, if $R$ is normal (thus, is a domain) 
and the degree of all the generators of $\omega^{(-1)}$ 
are the same, we say that $R$ is anticanonical level.
\end{definition}
As is noted in \cite[Example 3.4]{pag}, level property does not imply anticanonical 
level property nor anticanonical level property does not imply level property.

Now we recall the definition of a Hibi ring.
A lattice is a poset $L$ such that for any elements $\alpha$ and $\beta\in L$,
there are the minimum upper bound of $\{\alpha,\beta\}$, denoted $\alpha\join\beta$
and the maximum lower bound of $\{\alpha,\beta\}$, denoted $\alpha\meet\beta$.
A lattice $L$ is distributive if 
$\alpha\meet(\beta\join\gamma)=(\alpha\meet\beta)\join(\alpha\meet\gamma)$ and
$\alpha\join(\beta\meet\gamma)=(\alpha\join\beta)\meet(\alpha\join\gamma)$
for any $\alpha$, $\beta$ and $\gamma\in L$.

Let $\KKK$ be a field,
$H$ a finite distributive lattice with unique minimal element
$x_0$, $P$ the set of \joinirred\ elements of $H$, i.e.,
$P=\{\alpha \in H\mid \alpha=\beta\join\gamma\Rightarrow
\alpha=\beta$ or $\alpha=\gamma\}$.
Note that we treat $x_0$ as a \joinirred\ element.
It is known that $H$ is isomorphic to the set of nonempty poset ideals of $P$
ordered by inclusion.

Let $\{T_x\}_{x\in P}$ be a family of indeterminates indexed by $P$.

\begin{definition}[{\cite{hib}}]
\rm
$\rkh\define \KKK[\prod_{x\in I}T_x\mid I$ is a nonempty poset ideal of $P]$.
\end{definition}
It is easily verified that if we set $\alpha=\bigjoin_{x\in I}x$
for a nonempty poset ideal $I$, then 
$I=\{x\in P\mid x\leq \alpha$ in $H\}$.
Further, for $\alpha \in H$, $\{x\in P\mid x\leq \alpha\}$ is a nonempty poset ideal
of $P$.
Thus, 
$\rkh=\KKK[\prod_{x\leq \alpha}T_x\mid \alpha\in H]$.

$\rkh$ is called the Hibi ring over $\KKK$ on $H$ nowadays.
Hibi \cite[\S 2 b)]{hib} showed that $\rkh$ is a normal affine 
semigroup ring and thus is \cm\ by the result of Hochster \cite{hoc}.
Further, he showed \cite[\S 3 d)]{hib} that $\rkh$ is \gor\ if and only if 
$P$ is pure.
Moreover, by setting $\deg T_{x_0}=1$ and $\deg T_x=0$ for $x\in P\setminus\{x_0\}$,
$\rkh$ is a standard graded $\KKK$-algebra.
We denote the graded canonical module of $\rkh$ by $\omega$.

For $\nu\colon P\to\ZZZ$, we denote the Laurent monomial
$\prod_{x\in P}T_x^{\nu(x)}$ by $T^\nu$.
Note that $\deg T^\nu=\nu(x_0)$.
It is shown by Hibi \cite{hib} and is easily verified that
$$
\rkh=\bigoplus_{\nu\in\TTTTT^{(0)}(P)}\KKK T^\nu
$$
and  therefore by the description of the canonical module of a normal
affine semigroup ring by Stanley \cite[p.\ 82]{sta2}, we see that
$$
\omega=\bigoplus_{\nu\in\TTTTT^{(1)}(P)}\KKK T^\nu.
$$
We call this ideal the canonical ideal of $\rkh$ and
$\omega^{(-1)}$ the anticanonical ideal of $\rkh$.

Next we state the following

\begin{lemma}
\mylabel{lem:exist n}
Let $x$ and $y$ be elements of $P^+$ with $x\covered y$ and $n\in\ZZZ$.
Then there exists $\nu\in\TTTTTn(P)$ such that
$\nu(x)-\nu(y)=n$.
\end{lemma}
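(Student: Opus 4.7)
The plan is to construct $\nu$ explicitly, but the construction has to be split by the sign of $n$: the cover-inequality $\nu(u)-\nu(v)\geq n$ allows $\nu$ to be locally constant when $n\leq 0$, while for $n>0$ it forces $\nu$ to drop strictly on every cover of $P^+$.

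For $n\leq 0$ I would take the cutoff defined by $\nu(z)=n$ if $z\leq x$ in $P^+$ and $\nu(z)=0$ otherwise. Since $x\covered y$ forces $x\neq\infty$ and $y\not\leq x$, we get $\nu(\infty)=0$ and $\nu(x)-\nu(y)=n$. For any other cover $u\covered v$, transitivity of $\leq$ rules out the configuration $v\leq x$ with $u\not\leq x$, so $\nu(u)-\nu(v)\in\{0,n\}$, and both values are $\geq n$ because $n\leq 0$.

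For $n>0$ the cutoff idea cannot work, so instead I would pick a linear extension $\ell\colon P^+\to\{0,1,\ldots,M\}$ of the partial order on $P^+$ in which $y$ is placed immediately after $x$, i.e.\ $\ell(y)=\ell(x)+1$, and set $\nu(z)=n(M-\ell(z))$. Then $\nu(\infty)=0$ (as $\ell(\infty)=M$), $\nu(x)-\nu(y)=n$ by the choice of $\ell$, and every cover $u\covered v$ satisfies $\ell(v)-\ell(u)\geq 1$, hence $\nu(u)-\nu(v)=n(\ell(v)-\ell(u))\geq n$.

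The one nontrivial ingredient is the existence of a linear extension of $P^+$ in which $y$ is placed immediately after $x$, and this is the step where I expect the write-up to need the most care. It follows from the hypothesis $x\covered y$: because nothing lies strictly between $x$ and $y$, the quotient obtained by identifying them is still a well-defined poset, and lifting any linear extension of this quotient by inserting $x$ immediately followed by $y$ yields the desired linear extension of $P^+$.
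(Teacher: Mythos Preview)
Your argument is correct. The two-valued cutoff for $n\leq 0$ works exactly as you describe, and for $n>0$ the rescaled linear extension does the job once you know that a linear extension placing $y$ immediately after $x$ exists; your quotient argument for this is valid (the only possible failure of antisymmetry in the quotient would require an element strictly between $x$ and $y$, which $x\covered y$ forbids).

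The paper, however, does not split on the sign of $n$. It writes down a single formula that works uniformly: for $z\in P^+$,
\[
\nu(z)=
\begin{cases}
\qndist(z,\infty) & \text{if } z\not\leq y,\\
\max\{\qndist(z,\infty),\ \qndist(x,\infty)-n+\qndist(z,y)\} & \text{if } z\leq y,
\end{cases}
\]
and leaves the routine verification to the reader. This construction leans on the $\qndist$ machinery already set up in the paper (Definition~\ref{def:dist}), so it integrates naturally with the surrounding arguments. Your approach is more self-contained---it needs no reference to $\qndist$---but pays for this with the case split and the auxiliary linear-extension lemma. Either route is perfectly acceptable for this lemma.
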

\begin{proof}
For $z\in P^+$,
set
$$
\nu(z)=
\left\{\begin{array}{ll}
\qndist(z,\infty)\quad&\mbox{if $z\not\leq y$,}
\\
\max\{\qndist(z,\infty),\qndist(x,\infty)-n+\qndist(z,y)\}\quad&\mbox{if $z\leq y$.}
\end{array}
\right.
$$
Then it is easily verified that $\nu$ satisfies the required condition.
\end{proof}

Now we state the following.

\begin{thm}
\mylabel{thm:omega n basis}
For a positive integer $n$,
\begin{eqnarray*}
\omega^n&=&\omega^{(n)}=\bigoplus_{\nu\in\TTTTTn(P)}\KKK T^\nu,
\\
(\omega^{(-1)})^n&=&\omega^{(-n)}=\bigoplus_{\nu\in\TTTTT^{(-n)}(P)}\KKK T^\nu.
\end{eqnarray*}
\end{thm}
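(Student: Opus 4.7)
\proof
The plan is first to carry out the duality computation that identifies $\rkh:_{Q(\rkh)}M_n$ with $M_{-n}$ for every $n\in\ZZZ$ (where $M_k:=\bigoplus_{\nu\in\TTTTT^{(k)}(P)}\KKK T^\nu$), so that each $M_n$ is reflexive and $\omega^{(-1)}=M_{-1}$; then to establish $\omega^n=M_n$ and $(\omega^{(-1)})^n=M_{-n}$ by induction on $n$ via the floor-function identity of Lemma \ref{lem:op nu}; and finally to combine the two observations to conclude that the ordinary and divisorial powers coincide.

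For the duality step I would exploit the $\ZZZ^P$-multigrading on $\rkh$: since $M_n$ is a multigraded submodule of $Q(\rkh)$, so is $\rkh:_{Q(\rkh)}M_n$, and $T^\mu$ lies in it iff $\mu+\nu\in\TTTTT^{(0)}(P)$ for every $\nu\in\TTTTT^{(n)}(P)$, i.e.\ iff $\mu(x)-\mu(y)\geq-(\nu(x)-\nu(y))$ at every cover $x\covered y$ in $P^+$ and every admissible $\nu$. Lemma \ref{lem:exist n} supplies at each cover a $\nu\in\TTTTT^{(n)}(P)$ realizing $\nu(x)-\nu(y)=n$, so the constraint on $\mu$ collapses to $\mu\in\TTTTT^{(-n)}(P)$; hence $\rkh:_{Q(\rkh)}M_n=M_{-n}$. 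Iterating yields $\rkh:_{Q(\rkh)}M_{-n}=M_n$, so each $M_n$ is reflexive, and the case $n=1$ identifies $\omega^{(-1)}=M_{-1}$.

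For the induction I would take as base cases $\omega=M_1$ (the given description of the canonical module) and $\omega^{(-1)}=M_{-1}$ (from the duality step). The inclusions $\subseteq$ are immediate from Lemma \ref{lem:op nu}, since a product of $n$ generators of $\omega$ (resp.\ $\omega^{(-1)}$) has exponent in $\TTTTT^{(n)}(P)$ (resp.\ $\TTTTT^{(-n)}(P)$). For the reverse inclusion I would, for $\nu\in\TTTTT^{(\pm n)}(P)$ with $n\geq 2$, split
\[
\nu=\Bigl\lfloor\frac{\nu}{n}\Bigr\rfloor+\Bigl(\nu-\Bigl\lfloor\frac{\nu}{n}\Bigr\rfloor\Bigr),
\]
whose summands lie in $\TTTTT^{(\pm 1)}(P)$ and $\TTTTT^{(\pm(n-1))}(P)$ respectively by Lemma \ref{lem:op nu}; the inductive hypothesis then factors $T^\nu$ as a product of $n$ elements of $\omega$ or $\omega^{(-1)}$. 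Since the divisorial $n$-th power of a reflexive ideal is the reflexive hull of its ordinary $n$-th power, and each $M_{\pm n}$ is already reflexive, the equalities $\omega^n=\omega^{(n)}$ and $(\omega^{(-1)})^n=\omega^{(-n)}$ follow.

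The principal obstacle is the reverse inclusion in the induction: one needs to peel a component in $\TTTTT^{(\pm 1)}(P)$ off an arbitrary $\nu\in\TTTTT^{(\pm n)}(P)$ while leaving a legitimate remainder in $\TTTTT^{(\pm(n-1))}(P)$, and the floor-function clause of Lemma \ref{lem:op nu} is precisely the right tool. The reflexivity argument, by contrast, is a routine multigraded double-dualization whose sharpness is delivered by the extremal witnesses produced by Lemma \ref{lem:exist n}.
\endproof
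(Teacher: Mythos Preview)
Your proof is correct and follows essentially the same route as the paper: both arguments hinge on Lemma~\ref{lem:exist n} to pin down the colon ideal via extremal witnesses at each cover, and on the floor-function clause of Lemma~\ref{lem:op nu} to peel off a $\TTTTT^{(\pm1)}(P)$-summand and thereby factor $T^\nu$ into an $n$-fold product. The only difference is organizational: you first establish $\rkh:_{Q(\rkh)}M_n=M_{-n}$ for the abstract modules $M_n$ (hence reflexivity) and then prove $\omega^n=M_n$ by induction, whereas the paper first computes $\rkh:\omega^n=M_{-n}$, dualizes again to get $\omega^{(n)}=M_n$, and only then uses the floor decomposition to force $M_n\subset\omega^n$.
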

\begin{proof}
Let $\nu$ be an arbitrary element of $\TTTTT^{(-n)}(P)$
and let $\nu_1$, \ldots, $\nu_n$ be arbitrary elements in $\TTTTT^{(1)}(P)$.
Then $\nu+\nu_1+\cdots+\nu_n\in\TTTTT^{(0)}(P)$.
Therefore,
$$
\bigoplus_{\nu\in\TTTTT^{(-n)}(P)}\KKK T^\nu\subset
(\rkh\colon \omega^n)=\omega^{(-n)}.
$$
In order to prove the converse inclusion, first note that $\omega^{(-n)}$
is a $\ZZZ^{\#P}$-graded 
$\rkh$-submodule of the Laurent polynomial ring 
$\KKK[T_x^{\pm1}\mid x\in P]$ and therefore a $\KKK$-vector subspace of
$\KKK[T_x^{\pm1}\mid x\in P]$ which has a basis consisting of Laurent monomials.

Let $T^\nu$, $\nu\colon P\to \ZZZ$, be an arbitrary Laurent monomial in $\omega^{(-n)}$.
We extend $\nu$ to a map from $P^+$ to $\ZZZ$ by setting $\nu(\infty)=0$.
Let $x$ and $y$ be arbitrary elements of $P^+$ with $x\covered y$.
Then by Lemma \ref{lem:exist n}, we see that there is $\nu'\in\TTTTT^{(1)}(P)$ such that
$\nu'(x)-\nu'(y)=1$.
Since $(T^{\nu'})^n\in\omega^n$, we see that
$$
T^{\nu+n\nu'}=T^\nu (T^{\nu'})^n\in \rkh.
$$
Thus
$
(\nu+n\nu')(x)-(\nu+n\nu')(y)\geq 0
$
and we see that
$\nu(x)-\nu(y)\geq -n$.

Since $x$ and $y$ are arbitrary, we see that $\nu\in\TTTTT^{(-n)}(P)$.
Thus we see that
$$
\omega^{(-n)}\subset\bigoplus_{\nu\in\TTTTT^{(-n)}(P)}\KKK T^\nu
$$
and therefore
$$
\omega^{(-n)}=\bigoplus_{\nu\in\TTTTT^{(-n)}(P)}\KKK T^\nu.
$$

From this fact, we can show that
$$
\omega^{(n)}=(\rkh\colon \omega^{(-n)})=\bigoplus_{\nu\in\TTTTT^{(n)}(P)}\KKK T^\nu
$$
by a similar way.

Next assume that $\nu$ is an arbitrary element of $\TTTTTn(P)$.
By using Lemma \ref{lem:op nu} repeatedly, we see that there are 
$\nu_1$, \ldots, $\nu_n\in\TTTTT^{(1)}(P)$ such that
$\nu=\nu_1+\cdots+\nu_n$.
Therefore, $T^\nu\in\omega^n$.
Since $\nu$ is an arbitrary element of $\TTTTTn(P)$, we see that
$$
\omega^{(n)}=\bigoplus_{\nu\in\TTTTT^{(n)}(P)}\KKK T^\nu\subset\omega^n.
$$
Thus, we see that $\omega^{(n)}=\omega^n$, since the converse inclusion 
holds in general.
We see that $(\omega^{(-1)})^n=\omega^{(-n)}$ by the same way.
\end{proof}

\begin{remark}\rm
By Theorem \ref{thm:omega n basis}, we see that symbolic Rees algebras
$R=\bigoplus_{n\geq 0}\omega^{(n)}$
and
$R'=\bigoplus_{n\geq 0}\omega^{(-n)}$
are ordinary Rees algebras and therefore Noetherian.
Thus, by applying the result of Goto et al. 
\cite[Theorems (4.5) and (4.8)]{ghnv}
to $\omega$ and $\omega^{(-1)}$,
we see that $R'$ is \gor\
and the canonical module of $R$ is isomorphic to $\omega^{(2)}R$.

In our case, we can describe the canonical modules of these rings explicitly.
Let $X$ be a new indeterminate and we embed the above rings in the Laurent
polynomial ring $\KKK[T_x^{\pm1}\mid x\in P][X^{\pm1}]$ by identifying $R$ with
$\bigoplus_{n\geq 0}\omega^{(n)}X^n=\bigoplus_{n\geq0\atop \nu\in\TTTTTn(P)}\KKK T^\nu X^n$
 and $R'$ with $\bigoplus_{n\geq 0}\omega^{(-n)}X^{-n}=
\bigoplus_{n\geq 0\atop \nu\in\TTTTT^{(-n)}(P)}\KKK T^\nu X^{-n}$.
Then we see that these rings are normal by the Hochster's criterion \cite{hoc} and therefore \cm.
Further, by the Stanley's description of the canonical module of a normal affine semigroup ring,
we see that the canonical module of $R$ (resp.\ $R'$) is 
$\bigoplus_{n>0\atop \nu\in\TTTTT^{(n+1)}(P)}\KKK T^\nu X^n$
(resp.\ $\bigoplus_{n>0\atop \nu\in\TTTTT^{(-n+1)}(P)}\KKK T^\nu X^{-n}$).
Thus, we see by Theorem \ref{thm:omega n basis} that the canonical module of $R'$ is generated 
by $X^{-1}$.
Further, we see that the  canonical module of $R$ is generated by
$\{T^\nu X\mid \nu\in\TTTTT^{(2)}(P)\}$.
\end{remark}

\section{Generators of $\omega^{(n)}$ and \qnred\ sequences with \condn}

\mylabel{sec:qnred}

In this section, we state a characterization of a Laurent monomial to be a generator
of $\omega^{(n)}$, where $n\in\ZZZ$.

First, we introduce an order on $\TTTTTn(P)$ and describe generators of $\omega^{(n)}$
with it, where $n\in\ZZZ$.
Since $\omega^{(n)}$ is a finitely generated $\ZZZ^{\#P}$-graded $\rkh$-submodule of the
Laurent polynomial ring $\KKK[T_x^{\pm1}\mid x\in P]$, there is a unique minimal set of
Laurent monomials which generate $\omega^{(n)}$ as an $\rkh$-module.
We call an element of this set a generator of $\omega^{(n)}$.
By Theorem \ref{thm:omega n basis},
we see that for $\nu\in\TTTTTn(P)$, $T^\nu$ is a generator of $\omega^{(n)}$
if and only if there are no $\nu_1\in\TTTTTn(P)$ and $\nu_2\in\TTTTT^{(0)}(P)$ such that
$\nu\neq\nu_1$ and $\nu=\nu_1+\nu_2$.
On account of this fact, we make the following.

\begin{definition}\rm
\mylabel{def:order ttttt}
Let $n\in\ZZZ$ and $\nu$, $\nu'\in\TTTTTn(P)$.
We define the relation $\leq$ on $\TTTTTn(P)$ by
$$
\nu\leq\nu'\iff\nu'-\nu\in\TTTTT^{(0)}(P).
$$
\end{definition}
It is easily verified that $\leq$ is an order relation on $\TTTTT^{(n)}(P)$.
Further, by the above argument, for $\nu\in\TTTTT^{(n)}(P)$, $T^\nu$
is a generator of $\omega^{(n)}$ if and only if $\nu$ is a minimal element of 
$\TTTTT^{(n)}(P)$.

In the following of this section, we fix $n\in\ZZZ$.
First we state the following (cf. \cite[Definition 3.1]{mo}).

\begin{definition}\rm
\mylabel{def:condn}
We say a (possibly empty) sequence $y_0$, $x_1$, $y_1$, $x_2$, \ldots, $y_{t-1}$, $x_t$
of elements $P\setminus\{x_0\}$ satisfies \condn\ if
\begin{enumerate}
\item
$y_0>x_1<y_1>x_2<\cdots<y_{t-1}>x_t$ and
\item
$y_i\not\geq x_j$ if $i\leq j-2$.
\end{enumerate}
\end{definition}
When considering a \scn, we add $x_0$ at the beginning and $y_t=\infty$ at the end of the
sequence and consider a sequence $x_0$, $y_0$, \ldots, $x_t$, $y_t$.
In particular, when $t=0$, we consider a sequence $x_0$, $\infty$.

In order to simplify description, we set the following.

\begin{notation}
Let
$w_0$, $z_0$, $w_1$, $z_1$, \ldots, $w_\ell$, $z_\ell$ be elements of $P^+$ with
$w_0<z_0>w_1<z_1>\cdots>w_\ell<z_\ell$.
We set
$$
\qn(w_0,z_0,w_1,z_1,\ldots,w_\ell,z_\ell)\define
\sum_{i=0}^\ell\qndist(w_i,z_i)-\sum_{i=1}^{\ell}\qndist(w_{i},z_{i-1}).
$$
\end{notation}
%
%
Next we define the following property of a \scn.

\begin{definition}\rm
\mylabel{def:qnred}
Let $m$ be an integer and
$y_0$, $x_1$, \ldots, $y_{t-1}$, $x_t$ a \scn.
Set $y_t=\infty$.
If for any $0\leq i<j\leq t$ with $x_i\leq y_j$,
$\qmdist(x_i,y_j)<\qm(x_i,y_i,\ldots,x_j,y_j)$,
we say that $y_0$, $x_1$, \ldots, $y_{t-1}$, $x_t$ is $\qm$-reduced.
We treat the empty sequence as a $\qm$-reduced \scn.
\end{definition}
Note that a sequence $y_0$, $x_1$, \ldots, $y_{t-1}$, $x_t$ with \condn\ is \qonered\ 
(resp.\ \qmonered) if and only if it is $\qm$-reduced 
(resp.\ $q^{(-m)}$-reduced) for any $m>0$.
\begin{example}
\rm
If there is a part of the sequence of the following form
$$
\begin{picture}(60,40)

\put(10,10){\circle*{3}}
\put(10,30){\circle*{3}}
\put(30,10){\circle*{3}}
\put(30,30){\circle*{3}}
\put(50,10){\circle*{3}}
\put(50,30){\circle*{3}}

\put(20,20){\circle*{3}}

\put(10,10){\line(2,1){40}}
\put(10,10){\line(0,1){20}}
\put(10,30){\line(1,-1){20}}
\put(30,10){\line(0,1){20}}
\put(30,30){\line(1,-1){20}}
\put(50,10){\line(0,1){20}}

\put(9,31){\makebox(0,0)[br]{$y_i$}}
\put(9,9){\makebox(0,0)[tr]{$x_i$}}
\put(30,9){\makebox(0,0)[t]{$x_{i+1}$}}
\put(30,31){\makebox(0,0)[b]{$y_{i+1}$}}
\put(51,9){\makebox(0,0)[tl]{$x_{i+2}$}}
\put(51,31){\makebox(0,0)[bl]{$y_{i+2}$}}

\end{picture}
\begin{picture}(10,40)
\put(5,20){\makebox(0,0){or}}
\end{picture}
\begin{picture}(60,40)

\put(10,10){\circle*{3}}
\put(10,30){\circle*{3}}
\put(30,10){\circle*{3}}
\put(30,30){\circle*{3}}
\put(50,10){\circle*{3}}
\put(50,30){\circle*{3}}


\put(10,10){\line(2,1){40}}
\put(10,10){\line(0,1){20}}
\put(10,30){\line(1,-1){20}}
\put(30,10){\line(0,1){20}}
\put(30,30){\line(1,-1){20}}
\put(50,10){\line(0,1){20}}

\put(9,31){\makebox(0,0)[br]{$y_i$}}
\put(9,9){\makebox(0,0)[tr]{$x_i$}}
\put(30,9){\makebox(0,0)[t]{$x_{i+1}$}}
\put(30,31){\makebox(0,0)[b]{$y_{i+1}$}}
\put(51,9){\makebox(0,0)[tl]{$x_{i+2}$}}
\put(51,31){\makebox(0,0)[bl]{$y_{i+2}$}}

\end{picture}
$$
then it is not \qonered.
Later, we seek a 
sequence
$y_0$, $x_1$, \ldots, $y_{t-1}$, $x_t$ with
\condn\ 
such that
$\qe(x_0, y_0, \ldots, x_t, \infty)$ as large as possible,
where $\epsilon=1$ or $-1$.
If there is a part of the first kind in the \scn, we can replace it with
$y_0$, $x_1$, \ldots, $x_i$, $y_{i+2}$, $x_{i+2}$, \ldots, $x_t$
and obtain a sequence with larger $\qone(x_0,y_0,\ldots)$.
Further, if there is a part of the second kind in the \scn, we apply 
the replacement above and remove redundancy.
In fact, there is no redundancy of this kind is key to Lemma \ref{lem:nu up down}
and \S \ref{sec:conv polytope}.
\end{example}

Now we begin to analyze the property of generating system of $\omega^{(n)}$.
First we state the following (cf. \cite[Lemma 3.2]{mo}).

\begin{lemma}
\mylabel{lem:mo3.2}
Let $\nu$ be an element of $\TTTTTn(P)$.
If there is a possibly empty sequence 
$z_0$, $w_1$, \ldots, $z_{\ell-1}$, $w_\ell$ of elements of
$P\setminus\{x_0\}$ such that
$z_0>w_1<\cdots<z_{\ell-1}>w_\ell$ and
$\nu(w_i)-\nu(z_i)=\qndist(w_i,z_i)$
for any $0\leq i\leq \ell$, where we set $w_0=x_0$ and $z_\ell=\infty$,
then $\nu$ is a minimal element of $\TTTTTn(P)$.
\end{lemma}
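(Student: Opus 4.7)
The plan is to unfold Definition~\ref{def:order ttttt}: to show $\nu$ is minimal I take an arbitrary $\nu'\in\TTTTTn(P)$ with $\nu'\leq\nu$, set $\nu_2:=\nu-\nu'\in\TTTTT^{(0)}(P)$, and aim to deduce $\nu_2\equiv 0$, which will give $\nu=\nu'$.

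The central observation is that $x_0$ is the unique minimum of $P^+$: since any $\nu_2\in\TTTTT^{(0)}(P)$ is non-increasing along saturated chains going up to $\infty$ and satisfies $\nu_2(\infty)=0$, we have $\nu_2(x_0)\geq\nu_2(x)\geq 0$ for every $x\in P^+$. Hence it suffices to prove $\nu_2(x_0)=0$.

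To pin this down I would use the zig-zag $x_0=w_0<z_0>w_1<z_1>\cdots>w_\ell<z_\ell=\infty$ as a backbone along which to telescope. On each up-step $w_i\leq z_i$, the general inequality $\nu'(w_i)-\nu'(z_i)\geq\qndist(w_i,z_i)$ (valid for any $\nu'\in\TTTTTn(P)$, by iterating the cover condition along a saturated chain and taking the appropriate max), combined with the hypothesized equality $\nu(w_i)-\nu(z_i)=\qndist(w_i,z_i)$, yields $\nu_2(w_i)-\nu_2(z_i)\leq 0$; the opposite inequality from $\nu_2\in\TTTTT^{(0)}(P)$ then forces $\nu_2(w_i)=\nu_2(z_i)$. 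On each down-step $z_{i-1}>w_i$, monotonicity of $\nu_2$ gives $\nu_2(z_{i-1})\leq \nu_2(w_i)$. Chaining these,
\[
\nu_2(x_0)=\nu_2(w_0)=\nu_2(z_0)\leq\nu_2(w_1)=\nu_2(z_1)\leq\cdots\leq\nu_2(w_\ell)=\nu_2(z_\ell)=0,
\]
so $\nu_2(x_0)\leq 0$, and combined with $\nu_2(x_0)\geq 0$ this forces $\nu_2(x_0)=0$, whence $\nu_2\equiv 0$.

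No step looks delicate; the only things to watch are keeping the inequality directions straight across the zig-zag and verifying the degenerate case $\ell=0$, in which the sequence collapses to the single pair $x_0<\infty$ and the same telescoping shrinks to one up-step.
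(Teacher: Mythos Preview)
Your argument is correct and is essentially the standard one: the paper omits its own proof, referring to \cite[Lemma~3.2]{mo}, and the telescoping along the zig-zag using $\nu_2=\nu-\nu'\in\TTTTT^{(0)}(P)$ together with the remarks after Definitions~\ref{def:dist} and~\ref{def:tn} is exactly the expected route. The degenerate case $\ell=0$ is handled as you note.
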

The proof is almost identical with that of \cite[Lemma 3.2]{mo}.
Thus, we omit it.

Next we state a strong converse of this Lemma.

\begin{lemma}
\mylabel{lem:mo3.3}
Let $\nu$ be a minimal element of $\TTTTTn(P)$.
Then there is a possibly empty $\qn$-reduced sequence
$y_0$, $x_1$, \ldots, $y_{t-1}$, $x_t$ with \condn\ such that
\begin{equation}
\nu(x_i)-\nu(y_i)=\qndist(x_i,y_i)
\quad\mbox{for $0\leq i\leq t$,}
\mylabel{eq:mo3.3}
\end{equation}
where we set $y_t=\infty$.
\end{lemma}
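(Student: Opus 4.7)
The plan is a greedy left-to-right construction, alternating two rules. Given $x_i$, the first rule picks $y_i\in P^+$ to be a maximal element satisfying $y_i\geq x_i$ and $\nu(x_i)-\nu(y_i)=\qndist(x_i,y_i)$; such a $y_i$ exists because $y_i=x_i$ is always a trivial candidate. If $y_i=\infty$ the construction terminates with $t=i$; otherwise the second rule produces $x_{i+1}$ from the minimality of $\nu$. A preparatory observation is that the maximal choice of $y_i$ rules out any tight cover $y_i\covered w$ in $P^+$: concatenating such a cover with the tight saturated chain from $x_i$ to $y_i$ would, together with the standing bound $\nu(x_i)-\nu(w)\geq\qndist(x_i,w)$, exhibit $w>y_i$ as another tight candidate, contradicting maximality of $y_i$.

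The second rule exploits that the characteristic function $\chi_J$ of any nonempty poset ideal $J\subseteq P$ lies in $\TTTTT^{(0)}(P)$, so by Definition~\ref{def:order ttttt} the minimality of $\nu$ forces $\nu-\chi_J\notin\TTTTT^{(n)}(P)$; hence some covering constraint at the boundary of $J$ must already be tight, yielding a cover $u\covered v$ in $P^+$ with $u\in J$, $v\notin J$, and $\nu(u)-\nu(v)=n$. Applying this to the poset ideal $J_i:=\bigcup_{0\leq k\leq i}\{z\in P : z\leq y_k\}$ generated by all previously chosen $y_k$'s produces $x_{i+1}:=u$. The preparatory observation forbids $u=y_k$ for any $k$ (a tight cover out of $y_k$ would contradict maximality of $y_k$), and the partner $v\notin J_i$ automatically satisfies $\nu(x_{i+1})-\nu(v)=n=\qndist(x_{i+1},v)$, so $v$ feeds the next application of the first rule and forces $y_{i+1}\geq v>x_{i+1}$.

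The main obstacle will be verifying condition~N(2) and the $\qn$-reducedness of the resulting sequence. A telescoping calculation using \refeq{eq:mo3.3} and the standing inequality $\nu(a)-\nu(b)\geq\qndist(a,b)$ for $a\leq b$ in $P^+$ gives, whenever $x_i\leq y_j$ with $i<j$, the sandwich $\qndist(x_i,y_j)\leq\nu(x_i)-\nu(y_j)\leq\qn(x_i,y_i,\ldots,x_j,y_j)$, so failure of $\qn$-reducedness forces equality throughout and makes $y_j$ a tight candidate at stage $i$; maximality of $y_i$ then forces $y_j\leq y_i$, and for $j\geq i+2$ this yields $y_i\geq y_j\geq x_j$, in direct violation of condition~N(2). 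Condition~N(2) itself, together with the remaining case $j=i+1$ (where the derived equalities would give $y_i=y_{i+1}$), is enforced by a backtracking refinement of the second rule: if the element $u$ produced by the minimality argument happens to satisfy $u\leq y_{j^\ast}$ for some $j^\ast<i$, truncate the partial sequence at index $j^\ast$ and restart with $x_{j^\ast+1}:=u$. Termination of the backtracking follows because the leftmost index altered is strictly decreasing, and the overall construction terminates because $P$ is finite and condition~N(2) in any completed run makes all $x_i$'s distinct.
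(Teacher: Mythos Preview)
Your overall strategy---greedily building the sequence by alternating a maximal tight $y_i$ above $x_i$ with a tight cover crossing the ideal generated by the $y_k$'s---is sound in spirit and likely close to the construction in \cite{mo} that the paper defers to. However, two steps in your write-up do not go through as stated.

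\textbf{The $\qn$-reducedness argument.} From the sandwich equality you correctly deduce that $y_j$ is a tight candidate at stage $i$, i.e.\ $y_j\in S_i:=\{y\geq x_i:\nu(x_i)-\nu(y)=\qndist(x_i,y)\}$. But maximality of $y_i$ in $S_i$ only yields $y_j\not>y_i$; it does \emph{not} force $y_j\leq y_i$. The elements $y_i$ and $y_j$ may be incomparable, and then no contradiction with condition~N(2) arises. The same issue breaks your treatment of $j=i+1$: there the equalities give $y_{i+1}\in S_i$ and $y_i\in S_{i+1}$, hence $y_i\not>y_{i+1}$ and $y_{i+1}\not>y_i$; but your construction already guarantees $y_{i+1}\not\leq y_i$ (since $y_{i+1}\geq v\notin J_i$), so $y_i$ and $y_{i+1}$ are incomparable rather than equal, and you have no contradiction. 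A clean repair is to drop the claim that the constructed sequence is already $\qn$-reduced and instead post-process: whenever $x_i\leq y_j$ with $i<j$ and $\qndist(x_i,y_j)\geq\qn(x_i,y_i,\ldots,x_j,y_j)$, delete the segment $y_i,x_{i+1},\ldots,y_{j-1},x_j$. Your sandwich equality shows that \refeq{eq:mo3.3} survives this deletion, condition~N is easily seen to be preserved, and the length drops, so finitely many such reductions yield a $\qn$-reduced sequence.

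\textbf{Termination of the backtracking.} The assertion that ``the leftmost index altered is strictly decreasing'' is not justified. After backtracking to $j^\ast$ and recomputing $y_{j^\ast+1}$, the next application of the second rule uses the \emph{new} ideal $J_{j^\ast+1}$, which need not contain the old $J_i$; a subsequent backtrack can land at $j^{\ast\ast}=j^\ast$ (or, after some advances, at a larger index), and nothing you have written rules out oscillation. One way to obtain a genuine termination invariant is to observe that each freshly computed $y$ lies outside the ideal used in the immediately preceding minimality step, and to arrange the bookkeeping so that the ideals fed to the minimality argument form an increasing chain (for instance by never discarding previously generated $y$'s from the ideal, and adjusting the choice of $j^\ast$ accordingly); alternatively, one can argue termination via a lexicographic measure on the tuple of ideals $(J_0,J_1,\ldots)$. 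Either route requires more than the single sentence you give.
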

Since the proof is almost identical with \cite[Lemma 3.3]{mo},
we omit it.
Note that the \scn\ we constructed in the proof of \cite[Lemma 3.3]{mo}
is  $\qone$-reduced.
By Lemmas \ref{lem:mo3.2} and \ref{lem:mo3.3}, we see that $\nu\in\TTTTTn(P)$
is a minimal element of $\TTTTTn(P)$ if and only if there exists a \scn\ 
which satisfies equations \refeq{eq:mo3.3}.

Noting that there are only finitely many sequences with \condn, we make the following.

\begin{definition}\rm
\mylabel{def:q max}
We set
$\qn_0\define\qndist(x_0,\infty)$
and
$\qnmax\define\max\{\qn(x_0,y_0,\ldots,x_t,\infty)\mid
y_0$, $x_1$, \ldots, $y_{t-1}$, $x_t$ is a $\qn$-reduced
\scn$\}$.
\end{definition}
By 
the fact that 
$\nu(x_0)=\nu(x_0)-\nu(\infty)\geq\qndist(x_0,\infty)$, for $\nu\in\TTTTTn(P)$,
we see that 
$\qn_0\leq\nu(x_0)$ for any minimal element $\nu$ of $\TTTTTn(P)$.
Further, 
by the same way as the proof of \cite[Corollary 3.5]{mo},
we see that $\nu(x_0)\leq \qnmax$ for any minimal element $\nu$ of $\TTTTTn(P)$
by Lemma \ref{lem:mo3.3}.
Thus, 
$\qn_0\leq d\leq\qnmax$ is a necessary condition that there is a 
generator of $\omega^{(n)}$ with degree $d$.

We show that this is also a sufficient condition in the following of this section.

\begin{definition}\rm
\mylabel{def:mo3.6}
Let $y_0$, $x_1$, \ldots, $y_{t-1}$, $x_t$ be a $\qn$-reduced
\scn.
Set $y_t=\infty$.
We define
$$
\mu_{(y_0,x_1, \ldots, y_{t-1},x_t)}(x_i)\define\qn(x_i,y_i,\ldots, x_t,\infty)
$$
and
$$
\mu_{(y_0,x_1, \ldots, y_{t-1},x_t)}(y_i)\define\qn(x_i,y_i,\ldots, x_t,\infty)
-\qndist(x_i,y_i)
$$
for $0\leq i\leq t$.
We also define
$$
\nudown_{(y_0,x_1,\ldots, y_{t-1},x_t)}(z)
\define\max\{\mu_{(y_0,x_1,\ldots,y_{t-1},x_t)}(y_j)+\qndist(z,y_j)\mid
y_j\geq z\}
$$
and
$$
\nuup_{(y_0,x_1,\ldots, y_{t-1},x_t)}(z)
\define\min\{\mu_{(y_0,x_1,\ldots,y_{t-1},x_t)}(x_i)-\qndist(x_i,z)\mid
x_i\leq z\}
$$
\end{definition}
Note that the definition of $\nuup_{(y_0,x_1,\ldots,y_{t-1},x_t)}$ is different
from that of \cite[Definition 3.6]{mo}.
Here we define $\nudown_{(y_0,x_1,\ldots,y_{t-1},x_t)}$ and
$\nuup_{(y_0,x_1,\ldots,y_{t-1},x_t)}$ for $\qn$-reduced \scn\ only.
Next we state basic properties of 
$\nudown_{(y_0,x_1,\ldots,y_{t-1},x_t)}$ and
$\nuup_{(y_0,x_1,\ldots,y_{t-1},x_t)}$.

\begin{lemma}
\mylabel{lem:nu up down}
Let $y_0$, $x_1$, \ldots, $y_{t-1}$, $x_t$
be a \qnred\ \scn.
We denote
$\mu_{(y_0,x_1,\ldots,y_{t-1},x_t)}$,
$\nudown_{(y_0,x_1,\ldots,y_{t-1},x_t)}$ and
$\nuup_{(y_0,x_1,\ldots,y_{t-1},x_t)}$ by
$\mu$, $\nudown$ and $\nuup$ respectively.
Then
$\nudown$, $\nuup$ are minimal elements of $\TTTTTn(P)$,
$\nudown(x_i)=
\nuup(x_i)=
\mu(x_i)$
and
$\nudown(y_i)=
\nuup(y_i)=
\mu(y_i)$
for $0\leq i\leq t$.
\end{lemma}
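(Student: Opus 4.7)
The plan is to split the argument into three steps: (I) show $\nudown, \nuup \in \TTTTTn(P)$; (II) establish the four equalities $\nudown(x_i) = \nuup(x_i) = \mu(x_i)$ and $\nudown(y_i) = \nuup(y_i) = \mu(y_i)$; and (III) conclude by Lemma \ref{lem:mo3.2}. Throughout I abbreviate the three subscripted maps to $\mu$, $\nudown$, $\nuup$. Everything rests on the two telescoping identities $\mu(x_i) - \mu(y_i) = \qndist(x_i, y_i)$ and $\mu(x_{i+1}) - \mu(y_i) = \qndist(x_{i+1}, y_i)$, which are immediate from Definition \ref{def:mo3.6}.

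For Step (I), take a cover $z \covered z'$ in $P^+$ and let $y_j$ attain the maximum in $\nudown(z')$. Since $z < z' \leq y_j$, the index $j$ is also admissible for $\nudown(z)$; super-additivity of $\qndist$ along $z \leq z' \leq y_j$ gives $\qndist(z, y_j) \geq n + \qndist(z', y_j)$, so $\nudown(z) \geq n + \nudown(z')$. A dual argument with an $x_{i'}$ attaining the minimum at $z$ settles $\nuup$. That $\nudown(\infty) = 0$ is immediate from $\mu(y_t) = 0$, and $\nuup(\infty) = 0$ will fall out of Step (II) at $i = t$.

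For Step (II), the inequalities $\nudown \geq \mu$ and $\nuup \leq \mu$ at $x_i, y_i$ are immediate from the two telescoping identities by choosing $j = i$ (respectively $i' = i$). For the reverse inequalities, one observes that condition N confines the admissible index to a window around $i$: for example $y_j \geq y_i$ together with $y_i > x_i$ gives $y_j \geq x_i$, which by condition N forces $j \geq i-1$; dually $x_{i'} \leq x_i$ gives $x_{i'} < y_{i-1}$ and so $i' \leq i$; similar arguments handle the two mixed comparisons. Inside this window, for every ``far'' index (distance $\geq 2$ from $i$) telescoping rewrites the $\mu$-difference as a single $\qn$-expression, e.g.
\[
\mu(x_i) - \mu(y_j) = \qn(x_i, y_i, \ldots, x_j, y_j) \qquad (j > i),
\]
and the required inequality is precisely the (strict) form of the $\qn$-reduced hypothesis applied to $(x_i, y_j)$ or its analogue. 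For the remaining ``adjacent'' indices (distance $1$ from $i$, such as $j = i\pm 1$ or $i' = i\pm 1$), super-additivity of $\qndist$ along three-term chains like $x_{i+1} \leq y_i \leq y_{i+1}$ yields the necessary (non-strict) inequality.

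For Step (III), Step (II) gives $\nudown(x_i) - \nudown(y_i) = \qndist(x_i, y_i)$ for $0 \leq i \leq t$, so the chain $y_0, x_1, \ldots, y_{t-1}, x_t$ fulfills the hypotheses of Lemma \ref{lem:mo3.2} for $\nudown$; the same holds for $\nuup$, and both are therefore minimal elements of $\TTTTTn(P)$. The principal obstacle is the organized case analysis in Step (II): one must handle four distinct extremization problems, use condition N in each to trim away the indices on the ``wrong'' side of $i$, and then invoke the $\qn$-reduced hypothesis via the correct telescoping $\qn$-identity for the surviving far indices while handling the adjacent indices by super-additivity.
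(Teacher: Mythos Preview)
Your approach is essentially the paper's: the cover inequality for $\nudown,\nuup$, condition~N to trim the admissible index, the telescoping identity $\mu(x_i)-\mu(y_j)=\qn(x_i,y_i,\ldots,x_j,y_j)$, the strict $\qn$-reduced inequality, and the appeal to Lemma~\ref{lem:mo3.2} are exactly the ingredients the paper uses, and your Step~(III) matches the paper's final line verbatim.

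One small imprecision in your adjacent/far dichotomy: for the two ``mixed'' cases $\nuup(x_i)$ and $\nudown(y_i)$, the relevant $\mu$-difference is $\mu(x_{i'})-\mu(x_i)$ (respectively $\mu(y_i)-\mu(y_j)$), which is \emph{not} itself a single $\qn$-expression. Even for far indices you must first insert a super-additivity step (e.g.\ $\qndist(x_{i'},x_i)+\qndist(x_i,y_i)\leq\qndist(x_{i'},y_i)$) before the $\qn$-reduced hypothesis applies; this is precisely the paper's chain \refeq{eq:str ineq2}. So ``the required inequality is precisely the $\qn$-reduced hypothesis'' overstates things in those two cases, though the missing step is already in your toolkit and the argument goes through once you add it.
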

\begin{proof}
Suppose $z$, $z'\in P^+$ and $z\covered z'$.
Then it is easily verified that
\begin{equation}
\nudown(z)-\nudown(z')\geq n
\quad\mbox{and}\quad
\nuup(z)-\nuup(z')\geq n.
\mylabel{eq:enough diff}
\end{equation}

Next we show that $\nudown(x_i)=\mu(x_i)$ for $0\leq i\leq t$.
Take $j$ with $y_j\geq x_i$ and $\nudown(x_i)=\qndist(x_i,y_j)+\mu(y_j)$.
Then $j\geq i-1$ since $y_0$, $x_1$, \ldots, $y_{t-1}$, $x_t$ satisfies \condn.
If $j>i$, then since $y_0$, $x_1$, \ldots, $y_{t-1}$, $x_t$ is \qnred, we see that
\begin{eqnarray}
\nudown(x_i)&=&
\qndist(x_i,y_j)+\mu(y_j)
\nonumber\\
&<&\qn(x_i,y_i,\ldots, x_j,y_j)+\mu(y_j)
\mylabel{eq:str ineq1}
\\
&=&\mu(x_i)
\nonumber
\\
&=&\qndist(x_i,y_i)+\mu(y_i).
\nonumber
\end{eqnarray}
This contradicts to the definition of $\nudown$.
Thus, $j=i$ or $i-1$.
Since $\qndist(x_i,y_i)+\mu(y_i)=\mu(x_i)$ and
$\qndist(x_i,y_{i-1})+\mu(y_{i-1})=\mu(x_i)$ if $i>0$, we see that
$\nudown(x_i)=\mu(x_i)$ by the definition of $\nudown$.
We also see that $\nuup(y_i)=\mu(y_i)$ for $0\leq i\leq t$
by the same way.

Next we show that $\nuup(x_i)=\mu(x_i)$ for $0\leq i\leq t$.
Take $j$ with $x_j\leq x_i$ and $\nuup(x_i)=\mu(x_j)-\qndist(x_j,x_i)$.
Since $x_j\leq x_i<y_{i-1}$ and $y_0$, $x_1$, \ldots, $y_{t-1}$, $x_t$ satisfies \condn,
we see that $j\leq i$.
If $j<i$, then, since $y_0$, $x_1$, \ldots, $y_{t-1}$, $x_t$ is \qnred,
we see that
\begin{eqnarray}
\nuup(x_i)&=&\mu(x_j)-\qndist(x_j,x_i)
\nonumber\\
&=&\mu(x_j)-\qndist(x_j,x_i)-\qndist(x_i,y_i)+\qndist(x_i,y_i)
\nonumber\\
&\geq&\mu(x_j)-\qndist(x_j,y_i)+\qndist(x_i,y_i)
\nonumber\\
&>&\mu(x_j)-\qn(x_j,y_j,\ldots,x_i,y_i)+\qndist(x_i,y_i)
\mylabel{eq:str ineq2}
\\
&=&\mu(y_i)+\qndist(x_i,y_i)
\nonumber\\
&=&\mu(x_i)
\nonumber\\
&=&\mu(x_i)-\qndist(x_i,x_i).
\nonumber
\end{eqnarray}
This contradicts to the definition of $\nuup$.
Thus we see that $j=i$ and $\nuup(x_i)=\mu(x_i)-\qndist(x_i,x_i)=\mu(x_i)$.
We also see that $\nudown(y_i)=\mu(y_i)$ for $0\leq i\leq t$ by the same way.
In particular, $\nudown(\infty)=\nuup(\infty)=\mu(\infty)=0$.
Therefore, we see that $\nudown$, $\nuup\in\TTTTTn(P)$
by inequalities \refeq{eq:enough diff}.
By Lemma \ref{lem:mo3.2}, we see that $\nudown$ and $\nuup$ are minimal elements of $\TTTTTn(P)$.
\end{proof}
Next we note the following.

\begin{lemma}
\mylabel{lem:mo3.11}
Let $\nu$ be a minimal element of $\TTTTTn(P)$ and $k$ a positive integer.
Set
$$
\nu'(z)=\max\{\nu(z)-k,\qndist(z,\infty)\}
$$
for $z\in P^+$.
Then $\nu'$ is also a minimal element of $\TTTTTn(P)$.
\end{lemma}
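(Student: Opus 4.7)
The strategy is to first verify $\nu'\in\TTTTTn(P)$, and then exhibit a witnessing sequence for the minimality of $\nu'$ through Lemma~\ref{lem:mo3.2} by truncating the witnessing sequence for $\nu$ supplied by Lemma~\ref{lem:mo3.3}.

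For membership, note that $\nu'(\infty)=\max\{-k,0\}=0$ since $k>0$. For any $z\covered z'$ in $P^+$ one has both $\nu(z)-\nu(z')\geq n$ (because $\nu\in\TTTTTn(P)$) and $\qndist(z,\infty)-\qndist(z',\infty)\geq n$ (because any saturated chain from $z'$ to $\infty$ of length $s$ extends to one from $z$ of length $s+1$). A case split on which term realizes the maximum in $\nu'(z')$ then yields $\nu'(z)-\nu'(z')\geq n$, so $\nu'\in\TTTTTn(P)$.

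For minimality, I would apply Lemma~\ref{lem:mo3.3} to $\nu$ to fix a \qnred\ \scn\ $y_0,x_1,\ldots,y_{t-1},x_t$ (with $y_t\define\infty$) such that $\nu(x_i)-\nu(y_i)=\qndist(x_i,y_i)$ for all $0\leq i\leq t$. The equation at $i=t$ gives $\nu(x_t)=\qndist(x_t,\infty)$, hence $\nu(x_t)-k<\qndist(x_t,\infty)$, which lets me set
$$
i^*\define\min\{i\mid 0\leq i\leq t,\ \nu(x_i)-k\leq\qndist(x_i,\infty)\}\leq t.
$$
For every $i<i^*$, the strict inequality $\nu(x_i)-k>\qndist(x_i,\infty)$ forces $\nu'(x_i)=\nu(x_i)-k$; combining it with $\nu(y_i)=\nu(x_i)-\qndist(x_i,y_i)$ and with the superadditivity $\qndist(x_i,\infty)\geq\qndist(x_i,y_i)+\qndist(y_i,\infty)$ gives $\nu(y_i)-k>\qndist(y_i,\infty)$, hence also $\nu'(y_i)=\nu(y_i)-k$. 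Consequently $\nu'(x_i)-\nu'(y_i)=\qndist(x_i,y_i)$ for all $i<i^*$, while by the definition of $i^*$ one has $\nu'(x_{i^*})=\qndist(x_{i^*},\infty)$.

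The truncated sequence $y_0,x_1,\ldots,y_{i^*-1},x_{i^*}$ (empty when $i^*=0$) thus satisfies the hypotheses of Lemma~\ref{lem:mo3.2} applied to $\nu'$, which concludes that $\nu'$ is a minimal element of $\TTTTTn(P)$. The delicate step is the transfer of equality to the $y_i$'s for $i<i^*$; this rests precisely on the superadditivity of $\qndist$ and on the strict inequality packaged into the definition of $i^*$, and it is what makes the prefix of the original sequence work for $\nu'$ even though $\nu'$ itself is not a simple shift of $\nu$.
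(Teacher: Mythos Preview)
Your proof is correct and follows essentially the same approach as the paper, which defers to \cite[Lemma~3.11]{mo}: verify $\nu'\in\TTTTTn(P)$ directly, then use the witnessing sequence from Lemma~\ref{lem:mo3.3} and truncate it at the first index where $\nu(x_i)-k$ drops to or below $\qndist(x_i,\infty)$, invoking Lemma~\ref{lem:mo3.2} on the prefix. Your handling of the transfer of equality to the $y_i$'s via the subadditivity $\qndist(x_i,y_i)+\qndist(y_i,\infty)\leq\qndist(x_i,\infty)$ is exactly the point that makes the truncation work, and your case split for membership is clean (one could alternatively note that $z\mapsto\qndist(z,\infty)$ itself lies in $\TTTTTn(P)$ and apply Lemma~\ref{lem:op nu} for $\max$, but your direct argument is fine).
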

\begin{proof}
It is easily verified that $\nu'\in\TTTTTn(P)$.
The rest is proved along the same line with 
\cite[Lemma 3.11]{mo}.
\end{proof}

Now we state the following.

\begin{thm}
\mylabel{thm:deg range}
There exists a generator of $\omega^{(n)}$ with degree $d$ if and
only if $\qn_0\leq d\leq\qnmax$.
\end{thm}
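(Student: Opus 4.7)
The necessity is already established in the discussion right after Definition \ref{def:q max}: if $T^\nu$ is a generator of $\omega^{(n)}$ then $\nu$ is a minimal element of $\TTTTTn(P)$, and it is observed there that $\qn_0 \leq \nu(x_0) \leq \qnmax$ (the left inequality comes from $\nu(x_0) \geq \qndist(x_0,\infty)$, the right inequality comes from applying Lemma \ref{lem:mo3.3} to produce the \qnred{} \scn{} that witnesses the upper bound). Since $\deg T^\nu = \nu(x_0)$, this gives the ``only if'' direction for free.

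For the ``if'' direction, the plan is to first produce a generator at the top degree $\qnmax$, and then slide it down one degree at a time using Lemma \ref{lem:mo3.11}. Concretely: pick a \qnred{} \scn{} $y_0,x_1,\ldots,y_{t-1},x_t$ realizing $\qn(x_0,y_0,\ldots,x_t,\infty)=\qnmax$. By Lemma \ref{lem:nu up down}, the map $\nudown=\nudown_{(y_0,x_1,\ldots,y_{t-1},x_t)}$ is a minimal element of $\TTTTTn(P)$, and its value at $x_0$ equals $\mu(x_0)=\qn(x_0,y_0,\ldots,x_t,\infty)=\qnmax$. Thus $T^{\nudown}$ is a generator of $\omega^{(n)}$ of degree $\qnmax$.

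Now fix $d$ with $\qn_0\leq d\leq\qnmax$, and set $k\define\qnmax-d\geq 0$. Define
\[
\nu'(z)\define\max\{\nudown(z)-k,\ \qndist(z,\infty)\}\qquad(z\in P^+).
\]
By Lemma \ref{lem:mo3.11}, $\nu'$ is again a minimal element of $\TTTTTn(P)$ (when $k=0$ this is just $\nudown$ itself). Evaluating at $x_0$,
\[
\nu'(x_0)=\max\{\qnmax-k,\ \qn_0\}=\max\{d,\ \qn_0\}=d,
\]
the last equality because $d\geq\qn_0$. Hence $T^{\nu'}$ is a generator of $\omega^{(n)}$ of degree exactly $d$, which completes the sufficiency.

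No step here is genuinely hard because the heavy lifting was already packaged into Lemmas \ref{lem:nu up down} and \ref{lem:mo3.11}: the former produces a minimal element realizing the top degree from any maximizing \qnred{} \scn, and the latter provides a truncation operation that decreases the value at $x_0$ while staying inside the set of minimal elements of $\TTTTTn(P)$ (provided the resulting value remains at least $\qn_0$). The only subtlety worth double-checking is the edge case $d=\qn_0$ where the truncation pins $\nu'(x_0)$ at the floor $\qndist(x_0,\infty)$; this is exactly the reason Lemma \ref{lem:mo3.11} is stated with the $\max$ against $\qndist(z,\infty)$ rather than an unconstrained subtraction.
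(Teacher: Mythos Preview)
Your proof is correct and follows the paper's own argument essentially verbatim: both pick a \qnred\ \scn\ realizing $\qnmax$, invoke Lemma~\ref{lem:nu up down} to produce the minimal element $\nudown$ at degree $\qnmax$, and then apply Lemma~\ref{lem:mo3.11} with $k=\qnmax-d$ to descend to degree $d$. You are slightly more explicit than the paper in writing $\nu'(x_0)=\max\{d,\qn_0\}=d$ and in noting the $k=0$ edge case, but otherwise the arguments coincide.
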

\begin{proof}
``Only if'' part is already proved after Definition \ref{def:q max}.

Let $d$ be an integer with $\qn_0\leq d\leq\qnmax$.
Set $k=\qnmax-d$ and take a \qnred\ sequence $y_0$, $x_1$, \ldots, $y_{t-1}$, $x_t$
with \condn\ such that
$\qn(x_0,y_0,\ldots, x_t,\infty)=\qnmax$.
Then $\nudown_{(y_0,x_1,\ldots, y_{t-1},x_t)}$
is a minimal element of $\TTTTTn(P)$ with
$\nudown_{(y_0,x_1,\ldots, y_{t-1},x_t)}(x_0)=
\mu_{(y_0,x_1,\ldots, y_{t-1},x_t)}(x_0)=
\qn(x_0,y_0,\ldots, x_t,\infty)=\qnmax$
by Lemma \ref{lem:nu up down}
and $\nu'\colon P^+\to\ZZZ$, $z\mapsto 
\max\{\nudown_{(y_0,x_1,\ldots, y_{t-1},x_t)}(z)-k,\qndist(z,\infty)\}$
is a minimal element of $\TTTTTn(P)$ by Lemma \ref{lem:mo3.11}.
Since $\nu'(x_0)=\qnmax-k=d$, we see that $T^{\nu'}$ is a generator
of $\omega^{(n)}$ with degree $d$.
\end{proof}

For any nonempty \qnred\ sequence $y_0$, $x_1$, \ldots, $y_{t-1}$, $x_t$
with \condn,
$\qn_0=\qndist(x_0,\infty)<\qn(x_0,y_0,\ldots, x_t,\infty)$.
Therefore, we obtain the following result from Theorem \ref{thm:deg range}.

\begin{thm}
\mylabel{thm:pm level cri}
$\rkh$ is level (resp.\ anticanonical level)
if and only if \qonered\ (resp. \qmonered)
\scn\ is the empty sequence only.
\end{thm}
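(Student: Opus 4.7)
The plan is to read off the statement directly from Theorem \ref{thm:deg range} combined with the defining inequality for a \qered\ \scn, where I write $\epsilon \in \{1,-1\}$ so that level corresponds to $\epsilon = 1$ and anticanonical level to $\epsilon = -1$.

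First I would translate the level condition into an equality of integers. By Definition \ref{def:anticanonical level}, $\rkh$ is level (resp.\ anticanonical level) exactly when all generators of $\omega = \omega^{(1)}$ (resp.\ $\omega^{(-1)}$) have the same degree. By Theorem \ref{thm:deg range}, the set of degrees realized by generators of $\omega^{(\epsilon)}$ is precisely the integer interval $[\qe_0,\qe_{\max}]$. Hence the level (resp.\ anticanonical level) condition is equivalent to $\qe_0 = \qe_{\max}$.

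Next I would show that $\qe_0 = \qe_{\max}$ is equivalent to the empty sequence being the only \qered\ \scn. The $(\Leftarrow)$ direction is immediate: if only the empty sequence is \qered, then the supremum defining $\qe_{\max}$ in Definition \ref{def:q max} is attained only by the empty sequence $x_0,\infty$, giving $\qe_{\max} = \qedist(x_0,\infty) = \qe_0$. For $(\Rightarrow)$, suppose there exists a nonempty \qered\ \scn\ $y_0, x_1,\ldots,y_{t-1},x_t$ with $t \ge 1$; set $y_t = \infty$. Applying Definition \ref{def:qnred} to the pair $i=0, j=t$ (note $x_0 \le \infty = y_t$ is automatic), the \qered\ condition gives
\[
\qedist(x_0,\infty) < \qe(x_0,y_0,\ldots,x_t,\infty) \le \qe_{\max},
\]
so $\qe_0 < \qe_{\max}$, contradicting the hypothesis.

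There is no real obstacle here: the apparent hard work has already been done in Theorem \ref{thm:deg range} (generators of $\omega^{(\epsilon)}$ occupy every degree in $[\qe_0,\qe_{\max}]$), and the \qered\ hypothesis, instantiated at the extreme indices $(0,t)$, supplies precisely the strict inequality $\qe_0 < \qe_{\max}$ needed. So the proof reduces to a short, essentially one-line invocation of these two facts for $\epsilon = 1$ and $\epsilon = -1$ separately, yielding both halves of the theorem simultaneously.
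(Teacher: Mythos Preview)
Your proof is correct and follows essentially the same route as the paper: the paper also derives the theorem directly from Theorem \ref{thm:deg range} by observing that for any nonempty \qered\ \scn\ the defining inequality at $(i,j)=(0,t)$ gives $\qe_0=\qedist(x_0,\infty)<\qe(x_0,y_0,\ldots,x_t,\infty)\le\qe_{\max}$, while the empty sequence alone forces $\qe_{\max}=\qe_0$.
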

Note that for level case, Theorem \ref{thm:pm level cri} is another expression of
\cite[Theorem 3.9]{mo} using the notion of \qonered\ \scn.

As a corollary, we see that the anticanonical counterpart of
\cite[Theorem 3.3]{ml}
(see also \cite[Corollary 3.10]{mo}) 
also holds.

\begin{cor}
\mylabel{cor:ml3.3}
If $\{z\in P\mid z\geq w\}$ is pure
for any $w\in P\setminus\{x_0\}$,
then $\rkh$ is level and anticanonical level.
\end{cor}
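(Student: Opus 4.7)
By Theorem~\ref{thm:pm level cri}, it suffices to show that under the purity hypothesis, the only \qonered\ \scn\ and the only \qmonered\ \scn\ are empty. The plan is to assume for contradiction that a nonempty \scn\ $y_0,x_1,\ldots,y_{t-1},x_t$ exists (with $y_t\define\infty$ as usual), and derive an inequality $\qe\dist(x_0,\infty)\geq\qe(x_0,y_0,\ldots,x_t,\infty)$ for $\epsilon=\pm1$, contradicting the defining strict inequality for $\qe$-reducedness applied with $i=0$, $j=t$.

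The main observation is that the purity hypothesis makes the upper sets behave like graded posets. Concretely, for any $w\in P\setminus\{x_0\}$, the poset $\{z\in P^+\mid z\geq w\}=\{z\in P\mid z\geq w\}\cup\{\infty\}$ is pure with unique minimum $w$, hence graded with a well-defined rank function. As a consequence I will record two facts for any such $w$ and any $y\geq w$ in $P^+$: first, $\dist(w,y)=\qone\dist(w,y)$ (every saturated chain has the same length), and second, $\dist(w,\infty)=\dist(w,y)+\dist(y,\infty)$ (additivity, which uses that $y\neq x_0$ so $\{u\geq y\}$ is also pure, and the rank functions are compatible).

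Now apply this to $w=x_i$ for $i=1,\ldots,t$ (valid since $x_i\in P\setminus\{x_0\}$ by condition~N), noting that both $y_{i-1}$ and $y_i$ lie in $\{u\geq x_i\}$. The additivity yields
$$\dist(x_i,y_i)-\dist(x_i,y_{i-1})=\dist(y_{i-1},\infty)-\dist(y_i,\infty).$$
Summing from $i=1$ to $t$ telescopes the right-hand side (using $y_t=\infty$, so $\dist(y_t,\infty)=0$) to give
$$\sum_{i=1}^{t}\bigl(\dist(x_i,y_i)-\dist(x_i,y_{i-1})\bigr)=\dist(y_0,\infty).$$
Combined with the $i=0$ term this yields $\sum_{i=0}^{t}\dist(x_i,y_i)-\sum_{i=1}^{t}\dist(x_i,y_{i-1})=\dist(x_0,y_0)+\dist(y_0,\infty)$.

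The two cases are now immediate. For the \qmonered\ case, this equality gives $\qmone(x_0,y_0,\ldots,x_t,\infty)=-\bigl(\dist(x_0,y_0)+\dist(y_0,\infty)\bigr)$, and the triangle inequality $\dist(x_0,\infty)\leq \dist(x_0,y_0)+\dist(y_0,\infty)$ yields $\qmone\dist(x_0,\infty)\geq\qmone(x_0,y_0,\ldots,x_t,\infty)$, contradicting \qmonered. For the \qonered\ case, the same telescoping (replacing each $\dist(x_i,\cdot)$ by $\qone\dist(x_i,\cdot)$ for $i\geq1$, which is legal by purity) gives $\qone(x_0,y_0,\ldots,x_t,\infty)=\qone\dist(x_0,y_0)+\qone\dist(y_0,\infty)$, and concatenating a maximal chain from $x_0$ to $y_0$ with one from $y_0$ to $\infty$ gives the reverse inequality $\qone\dist(x_0,y_0)+\qone\dist(y_0,\infty)\leq\qone\dist(x_0,\infty)$, again contradicting \qonered. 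The main obstacle is the verification that under purity the $\dist$-additivity across $y_i$ holds in $P^+$ and not merely in one fixed upper set; this is handled by the compatibility of the rank functions of $\{u\geq x_i\}$ and $\{u\geq y_j\}$ for $y_j\geq x_i$, which both follow from the hypothesis.
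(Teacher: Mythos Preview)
Your proof is correct and follows essentially the same route as the paper: both arguments use the purity of the upper sets $\{z\geq x_i\}$ (for $i\geq1$) to obtain the additivity $\qe\dist(x_i,y_j)=\qe\dist(x_i,\infty)-\qe\dist(y_j,\infty)$, telescope the resulting sum to $\qe\dist(x_0,y_0)+\qe\dist(y_0,\infty)$, and conclude via the triangle inequality $\qe\dist(x_0,y_0)+\qe\dist(y_0,\infty)\leq\qe\dist(x_0,\infty)$, contradicting the $i=0$, $j=t$ instance of the reducedness condition. The only cosmetic difference is that the paper carries the computation in terms of $\qmone\dist$ throughout and says the level case is proved ``by the same way,'' whereas you work with $\dist$ and then split into the two sign cases explicitly.
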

\begin{proof}
Suppose that there exists a \qmonered\ sequence $y_0$, $x_1$, \ldots, $y_{t-1}$, $x_t$ 
\condn\ with $t>0$.
Set $y_t\define\infty$.
Then
\begin{eqnarray*}
&&\qmone(x_0,y_0,\ldots, x_t,y_t)\\
&=&\sum_{i=0}^t\qmone\dist(x_i,y_i)-\sum_{i=1}^{t}\qmone\dist(x_{i},y_{i-1})\\
&=&\qmone\dist(x_0,y_0)+\sum_{i=1}^t(\qmone\dist(x_i,\infty)-\qmone\dist(y_i,\infty))\\
&&\qquad-\sum_{i=1}^t(\qmone\dist(x_i,\infty)-\qmone\dist(y_{i-1},\infty))\\
&=&\qmone\dist(x_0,y_0)+\qmone\dist(y_0,\infty)-\qmone\dist(\infty,\infty)\\
&\leq&\qmone\dist(x_0,\infty),
\end{eqnarray*}
contradicting the fact that $y_0$, $x_1$, \ldots, $y_{t-1}$, $x_t$ is \qmonered.
Thus, there is no \qmonered\ \scn\ except the empty sequence and we see by 
Theorem \ref{thm:pm level cri} that $\rkh$ is anticanonical 
level.

The 
level property is proved by the same way.
\end{proof}
%


\section{Convex polytope associated to a \qered\ \scn}

\mylabel{sec:conv polytope}

Let $\epsilon$ be $\pm1$.
In this section, we construct a convex polytope associated to a \qered\ \scn.
Fix a \qered\ sequence $y_0$, $x_1$, \ldots, $y_{t-1}$, $x_t$ with \condn.
We set $y_t\define\infty$.
We define a convex polytope from $y_0$, $x_1$, \ldots, $y_{t-1}$, $x_t$
and study the Ehrhart ring defined by this polytope.

Here we establish the notation of the Ehrhart ring.
Let $W$ be a finite set and $C$ an rational convex polytope in $\RRR^W$,
i.e., a convex polytope whose vertices are in $\QQQ^W$.
Also let $\KKK$ be a field, $\{X_w\}_{w\in W}$ a family of indeterminates 
indexed by $W$ and $Y$ an indeterminate.
Then the Ehrhart ring $\KKK[C]$ of $C$ over $\KKK$ in $\KKK[X_w^{\pm1}\mid w\in W][Y]$
is the subring of $\KKK[X_w^{\pm1}\mid w\in W][Y]$ generated by
$\{\prod_{w\in W} X_w^{\nu(w)}T^n\mid n\in \NNN$, $\nu\in nC\cap\ZZZ^W\}$ over $\KKK$.
It is known that $\dim\KKK[C]=\dim C+1$, see e.g.
\cite[(14.C) Theorem 23]{mat}.

Let $n$ be  a positive integer. 
Note that $y_0$, $x_1$, \ldots, $y_{t-1}$, $x_t$ is a $q^{(n\epsilon)}$-reduced
\scn.
We set 
$$
C^{(n\epsilon)}\define
\left\{
\nu\colon P^+\to\RRR\ \bigg|\  \vcenter{\hsize=.5\textwidth\relax\noindent
$\nu(\infty)=0$,
$\nu(z)-\nu(z')\geq n\epsilon$ for any $z$, $z'\in P^+$ with
$z\covered z'$ and $\nu(x_i)-\nu(y_i)=\qdist{n\epsilon}(x_i,y_i)$
for $0\leq i\leq t$.
}\right\}.
$$
For any $\nu\in C^{(n\epsilon)}$ and $z\in P$,
$\nu(z)\geq\qne(z,\infty)$ and $\nu(z)
=\nu(x_0)-(\nu(x_0)-\nu(z))
\leq \qne_{\max}-\qne(x_0,z)$ 
by the  argument after Definition \ref{def:q max}.
Thus, $C^{(n\epsilon)}$ is bounded, i.e., $C^{(n\epsilon)}$ is a convex polytope.
Since $\qdist{n\epsilon}(x_i,y_i)=n\qdist{\epsilon}(x_i,y_i)$ for $0\leq i\leq t$,
we see that $C^{(n\epsilon)}=nC^{(\epsilon)}$.
Further, if $n\geq 2$ and $\nu\in C^{(n\epsilon)}\cap\ZZZ^{P}$,
then it is easily
verified that $\lfloor\frac{\nu}{n}\rfloor\in C^{(\epsilon)}\cap\ZZZ^{P}$
and $\nu-\lfloor\frac{\nu}{n}\rfloor\in C^{((n-1)\epsilon)}\cap\ZZZ^{P}$.
Thus, we see by induction that any element $\nu\in C^{(n\epsilon)}\cap\ZZZ^{P}$ can 
be written as a sum of $n$ elements of $C^{(\epsilon)}\cap\ZZZ^{P}$.
Since $C^{(n\epsilon)}=nC^{(\epsilon)}$, we see that the Ehrhart ring defined
by $C^{(\epsilon)}$ is a standard graded ring, i.e., 
generated by 
the degree 1 part
over the base field.
In particular, $C^{(\epsilon)}$ is integral, i.e., all the vertices 
of $C^{(\epsilon)}$ are contained
in $\ZZZ^P$.

Next we consider the dimension of $C^{(\epsilon)}$.
Set $G_i=\{z\in[x_i,y_i]_{P^+}\mid\qedist(x_i,z)+\qedist(z,y_i)=\qedist(x_i,y_i)\}$
for $0\leq i\leq t$ and $G=G_0\cup\cdots\cup G_t$.
Note that $x_i$, $y_i\in G_i$ for $0\leq i\leq t$.
Further for any $\nu\in C^{(\epsilon)}$ and $z\in G_i$,
\begin{equation}
\mylabel{eq:nuz fixed}
\nu(z)=\nu(y_i)+\qedist(z,y_i).
\end{equation}
since $\nu(z)-\nu(y_i)\geq\qedist(z,y_i)$, $\nu(x_i)-\nu(z)\geq\qedist(x_i,z)$,
$\qedist(x_i,z)+\qedist(z,y_i)\leq \qedist(x_i,y_i)$
and $\nu(x_i)-\nu(y_i)=\qedist(x_i,y_i)$.
Therefore,
\begin{equation}
\dim C^{(\epsilon)}\leq
\#(P\setminus G)+t.
\mylabel{eq:dim c ub}
\end{equation}

We show the converse inequality by showing that there are affinely independent
elements of $C^{(\epsilon)}$ consisting of 
$\#(P\setminus G)+t+1$ elements.
First we make the following.

\begin{definition}\rm
\mylabel{def:mu s nu s}
Let $s$ be an integer with $0\leq s\leq t$.
We set $\mu=\mu_{(y_0,x_1,\ldots, y_{t-1}, x_t)}$,
$$
\mu_s(x_i)\define
\left\{
\begin{array}{ll}
\mu(x_i)\quad&\mbox{if $i\geq s$}\\
\mu(x_i)-1\quad&\mbox{if $i< s$}
\end{array}\right.
$$
and
$$
\mu_s(y_i)\define
\left\{\begin{array}{ll}
\mu(y_i)\quad&\mbox{if $i\geq s$}\\
\mu(y_i)-1\quad&\mbox{if $i< s$.}
\end{array}\right.
$$
Further, we define maps $\nudown_s$ and $\nuup_s$ from $P^+$ to $\ZZZ$ by
$$
\nudown_s(z)\define\max\{\qedist(z,y_i)+\mu_s(y_i)\mid y_i\geq z\}
$$
and
$$
\nuup_s(z)\define\min\{\mu_s(x_i)-\qedist(x_i,z)\mid x_i\leq z\}.
$$
\end{definition}
Note that it is easily verified that
$
\nudown_s(z)-\nudown_s(z')\geq\epsilon$
and
$\nuup_s(z)-\nuup_s(z')\geq\epsilon$
for any $z$, $z'\in P^+$ with $z\covered z'$.
Further, since the inequalities \refeq{eq:str ineq1} and \refeq{eq:str ineq2}
are strict, we can show that
\begin{equation}
\nudown_s(x_i)=\nuup_s(x_i)=\mu_s(x_i)
\mylabel{eq:value xi}
\end{equation}
and
\begin{equation}
\nudown_s(y_i)=\nuup_s(y_i)=\mu_s(y_i)
\mylabel{eq:value yi}
\end{equation}
for any $i$ with $0\leq i\leq t$ by the same way as the proof of 
Lemma \ref{lem:nu up down}.
In particular, $\nudown_s(\infty)=\nuup_s(\infty)=0$.
Therefore, we see that $\nudown_s$, $\nuup_s\in\TTTTT^{(\epsilon)}(P)$
for any $s$ with $0\leq s\leq t$.

Since $\nudown_s(x_i)=\nuup_s(x_i)=\mu_s(x_i)$ and 
$\nudown_s(y_i)=\nuup_s(y_i)=\mu_s(y_i)$,
we see that
$\nudown_s(x_i)-\nudown_s(y_i)=\qedist(x_i,y_i)$
and
$\nuup_s(x_i)-\nuup_s(y_i)=\qedist(x_i,y_i)$
for any $0\leq i\leq t$.
Therefore, $\nudown_s$, $\nuup_s\in C^{(\epsilon)}$ for any $0\leq s\leq t$.
Note that $\mu_0=\mu$,
$\nudown_0=\nudown_{(y_0,x_1,\ldots, y_{t-1},x_t)}$
and
$\nuup_0=\nuup_{(y_0,x_1,\ldots, y_{t-1},x_t)}$.

Next we state the following.

\begin{lemma}
\mylabel{lem:up geq down}
Let $s$ be an integer with $0\leq s\leq t$.
Then $\nudown_s(z)\leq\nuup_s(z)$ for any $z\in P^+$.
\end{lemma}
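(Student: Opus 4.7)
The plan is to unfold both definitions and reduce the claim to showing that, for every pair $(i,j)$ with $y_i\geq z\geq x_j$,
\[
\qedist(x_j,z)+\qedist(z,y_i)\leq \mu_s(x_j)-\mu_s(y_i).
\]
Taking the max over $i$ on the left and the min over $j$ on the right then yields $\nudown_s(z)\leq\nuup_s(z)$. Since $x_0$ is the minimum of $P$ and $\infty$ is the maximum of $P^+$, both extrema are taken over nonempty sets, so this reduction is legitimate.

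For any admissible pair, condition~N forces $y_i\geq x_j$ only when $j\leq i+1$, which restricts us to three cases. Using the identity
\[
\mu(x_j)-\mu(y_i)=\qe(x_j,y_j,x_{j+1},y_{j+1},\ldots,x_i,y_i)
\quad(j\leq i)
\]
obtained by a straightforward telescoping of the defining sum of $\mu$, and the analogous direct computation $\mu(x_{i+1})-\mu(y_i)=\qedist(x_{i+1},y_i)$ in the case $j=i+1$, I would invoke subadditivity $\qedist(x_j,z)+\qedist(z,y_i)\leq \qedist(x_j,y_i)$ together with the $\qe$-reducedness hypothesis, which gives $\qedist(x_j,y_i)<\qe(x_j,y_j,\ldots,x_i,y_i)$ whenever $j<i$. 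This establishes
\[
\qedist(x_j,z)+\qedist(z,y_i)\leq \mu(x_j)-\mu(y_i),
\]
with strict inequality (and hence a $-1$ gap, since everything is integral) whenever $j<i$.

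Finally I would account for the shift by rewriting $\mu_s(x_j)-\mu_s(y_i)=\mu(x_j)-\mu(y_i)+\mathbf{1}_{i<s}-\mathbf{1}_{j<s}$. When $j\leq i$, the correction is $0$ or $-1$; a correction of $-1$ can only occur when $j<s\leq i$, which forces $j<i$, and the integer strict inequality above absorbs the $-1$. When $j=i+1$, the correction is $0$ or $+1$, which only helps. In every case the desired pointwise bound holds, completing the proof.

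The main obstacle is the bookkeeping around the $\mu_s$ shift: the inequality is sharp in the case $j=i$ and in the case $j=i+1$, and one has to verify that the shift by $\pm1$ introduced by the index $s$ never spoils equality—this is precisely why the stronger strict form of $\qered$-ness (integer strict inequality when $j<i$) is needed rather than a mere $\leq$.
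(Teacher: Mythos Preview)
Your proof is correct and follows essentially the same approach as the paper's. The paper picks the specific indices achieving the extrema and then splits into the cases $j=i-1$, $j=i$, $j>i$ (in its labeling, which swaps your roles of $i$ and $j$), using the same three ingredients: subadditivity of $\qedist$, the telescoping identity $\mu(x_j)-\mu(y_i)=\qe(x_j,\ldots,y_i)$, and the integer-strict form of $\qe$-reducedness to absorb the $-1$ shift when the indices straddle $s$.
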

\begin{proof}
Take $i$ and $j$ such that $x_i\leq z$, 
$\nuup_s(z)=\mu_s(x_i)-\qedist(x_i,z)$ and
$y_j\geq z$, $\nudown_s(z)=\mu_s(y_j)+\qedist(z,y_j)$.
Then $x_i\leq y_j$.
Therefore $j\geq i-1$ since $y_0$, $x_1$, \ldots, $y_{t-1}$, $x_t$
satisfies \condn.
Further,
\begin{eqnarray*}
\nuup_s(z)-\nudown_s(z)
&=&\mu_s(x_i)-\qedist(x_i,z)-\qedist(z,y_j)-\mu_s(y_j)\\
&\geq&\mu_s(x_i)-\mu_s(y_j)-\qedist(x_i,y_j).
\end{eqnarray*}
If $j=i-1$ or $j=i$, then
$$
\mu_s(x_i)-\mu_s(y_j)-\qedist(x_i,y_j)\geq\mu(x_i)-\mu(y_j)-\qedist(x_i,y_j)=0.
$$
If $j>i$, then since 
$y_0$, $x_1$, \ldots, $y_{t-1}$, $x_t$
is \qered,
we see that
\begin{eqnarray*}
&&\mu_s(x_i)-\mu_s(y_j)-\qedist(x_i,y_j)\\
&\geq&\mu(x_i)-\mu(y_j)-1-\qedist(x_i,y_j)\\
&=&\qe(x_i,y_i,\ldots,x_j,y_j)-1-\qedist(x_i,y_j)\\
&\geq&0.
\end{eqnarray*}
Thus, $\nuup_s(z)-\nudown_s(z)\geq 0$.
\end{proof}

We set $\nu_{00}\define\nudown_t$ and
$F_0\define\{z\in P\mid \nu_{00}(z)<\nuup_t(z)\}$.
Further, we set $F_0=\{z_{01}$, $z_{02}$, \ldots, $z_{0k(0)}\}$ so that
$z_{01}$, $z_{02}$, \ldots, $z_{0k(0)}$ is a linear extension of 
$F_0$, i.e., if $z_{0i}<z_{0j}$ then $i<j$.

For $j$ with $1\leq j\leq k(0)$, set $F_{0j}\define\{z_{01}$, \ldots, $z_{0j}\}$ and
for $z\in P^+$
$$
\nu_{0j}(z)=
\left\{\begin{array}{ll}
\nu_{00}(z)\quad&\mbox{if $z\not\in F_{0j}$,}\\
\nu_{00}(z)+1\quad&\mbox{if $z\in F_{0j}$.}
\end{array}
\right.
$$
Then for any $1\leq j\leq k(0)$, the following fact holds.

\begin{lemma}
\mylabel{lem:nu 0j in c}
Let $j$ be an integer with $1\leq j\leq k(0)$.
Then $\nu_{0j}$ is an element of $C^{(\epsilon)}$ such that
$\nu_{0j}(x_i)=\mu_t(x_i)$ and 
$\nu_{0j}(y_i)=\mu_t(y_i)$ for any $0\leq i\leq t$.
\end{lemma}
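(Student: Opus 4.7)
First I would observe that by \refeq{eq:value xi} and \refeq{eq:value yi}, $\nu_{00}(x_i)=\nudown_t(x_i)=\mu_t(x_i)=\nuup_t(x_i)$ and $\nu_{00}(y_i)=\mu_t(y_i)=\nuup_t(y_i)$, so none of the elements $x_i$, $y_i$ (for $0\leq i\leq t$) lies in $F_0$, and clearly $\infty\notin F_0$ since $F_0\subseteq P$. Because $F_{0j}\subseteq F_0$, this immediately gives $\nu_{0j}(x_i)=\mu_t(x_i)$, $\nu_{0j}(y_i)=\mu_t(y_i)$ for all $0\leq i\leq t$, together with $\nu_{0j}(\infty)=0$. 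In particular $\nu_{0j}(x_i)-\nu_{0j}(y_i)=\mu_t(x_i)-\mu_t(y_i)=\qedist(x_i,y_i)$, which accounts for the defining equalities of $C^{(\epsilon)}$ at the fixed pairs.

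The substantive task is to verify the covering inequality $\nu_{0j}(z)-\nu_{0j}(z')\geq\epsilon$ for every $z\covered z'$ in $P^+$. The plan is a case split on the $F_{0j}$-memberships of $z$ and $z'$. When the memberships agree, the inequality is inherited verbatim from $\nu_{00}=\nudown_t\in\TTTTT^{(\epsilon)}(P)$. When $z\in F_{0j}$ but $z'\notin F_{0j}$, the left-hand side equals $\nu_{00}(z)-\nu_{00}(z')+1\geq\epsilon+1\geq\epsilon$ (using $|\epsilon|=1$).

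The one delicate case is $z\notin F_{0j}$ and $z'\in F_{0j}$, and this is where the linear-extension ordering of $F_0$ is essential. Writing $z'=z_{0k}$ with $k\leq j$, the covering forces $z<z'$ in $P^+$ and, since $z'\in P$, also $z\in P$; if $z$ lay in $F_0$, the linear extension property would yield $z=z_{0k'}$ with $k'<k\leq j$, placing $z$ in $F_{0j}$ and contradicting the case hypothesis. Hence $z\notin F_0$, so by Lemma \ref{lem:up geq down} $\nu_{00}(z)=\nudown_t(z)=\nuup_t(z)$, while $z'\in F_0$ gives $\nu_{00}(z')\leq\nuup_t(z')-1$. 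Combined with $\nuup_t(z)-\nuup_t(z')\geq\epsilon$ (since $\nuup_t\in\TTTTT^{(\epsilon)}(P)$), this produces $\nu_{00}(z)-\nu_{00}(z')\geq\epsilon+1$, so $\nu_{0j}(z)-\nu_{0j}(z')=\nu_{00}(z)-\nu_{00}(z')-1\geq\epsilon$. This last case is the main obstacle: the linear extension is arranged precisely so that a covering relation from an element of $F_{0j}$ cannot point downward to an element of $F_0\setminus F_{0j}$, and once that is pinned down the strict gap between $\nudown_t$ and $\nuup_t$ on $F_0$ together with the monotonicity of $\nuup_t$ close out the inequality.
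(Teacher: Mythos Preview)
Your proof is correct and follows essentially the same approach as the paper's own argument: the same case split on $F_{0j}$-membership for the covering inequality, the same use of the linear-extension ordering to force $z\notin F_0$ in the delicate case, and the same comparison with $\nuup_t$ to close that case. The only differences are cosmetic—your case analysis is slightly more granular (four cases versus the paper's two) and you handle the values at $x_i$, $y_i$ first rather than last—but the substance is identical.
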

\begin{proof}
Suppose $z$, $z'\in P^+$ and $z\covered z'$.
If $z'\not\in F_{0j}$ or $z\in F_{0j}$, then 
$\nu_{0j}(z)-\nu_{0j}(z')\geq\nu_{00}(z)-\nu_{00}(z')\geq\epsilon$.
Assume that $z\not\in F_{0j}$ and $z'\in F_{0j}$.
Since $z_{01}$, \ldots, $z_{0k(0)}$ is a linear extension of $F_0$,
we see that $z\not\in F_0$.
Therefore, $\nuup_t(z)=\nu_{00}(z)$.
On the other hand, since $z'\in F_0$, we see that
$\nuup_t(z')\geq\nu_{00}(z')+1$.
Thus,
$$
\nu_{0j}(z)-\nu_{0j}(z')=\nu_{00}(z)-(\nu_{00}(z')+1)
\geq\nuup_t(z)-\nuup_t(z')\geq\epsilon.
$$
Since $\nuup_t(x_i)=\nudown_t(x_i)=\mu_t(x_i)$
and $\nuup_t(y_i)=\nudown_t(y_i)=\mu_t(y_i)$,
we see that $x_i$, $y_i\not\in F_0$ for any $0\leq i\leq t$.
Thus, $\nu_{0j}(x_i)=\mu_t(x_i)$ and $\nu_{0j}(y_i)=\mu_t(y_i)$
for any $0\leq i\leq t$.
In particular,
$\nu_{0j}(\infty)=\nu_{0j}(y_t)=0$
and 
$\nu_{0j}(x_i)-\nu_{0j}(y_i)=\mu_t(x_i)-\mu_t(y_i)=\qedist(x_i,y_i)$.
Thus, we see that $\nu_{0j}\in C^{(\epsilon)}$.
\end{proof}

By the above lemma, we see that $\nu_{0k(0)}\in \TTTTTe(P)$.
We set  $\nu_{10}\define\max\{\nudown_{t-1}, \nu_{0k(0)}\}$.
Then we see the following.

\begin{lemma}
\mylabel{lem:nu 10 leq up}
$\nu_{10}$ is an element of $C^{(\epsilon)}$ such that
$\nu_{10}(x_i)=\mu_{t-1}(x_i)$,
$\nu_{10}(y_i)=\mu_{t-1}(y_i)$
for $0\leq i\leq t$ and $\nu_{10}(z)\leq\nuup_{t-1}(z)$
for any $z\in P^+$.
\end{lemma}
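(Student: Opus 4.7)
The plan is to verify the four claimed properties of $\nu_{10} = \max\{\nudown_{t-1}, \nu_{0k(0)}\}$ in order. First I would show $\nu_{10} \in \TTTTTe(P)$: by equation \refeq{eq:value xi} and Lemma \ref{lem:nu 0j in c} both constituents lie in $\TTTTTe(P)$, so Lemma \ref{lem:op nu} (applied with $m=\epsilon$) gives the same for their pointwise maximum. In particular $\nu_{10}(\infty)=\max\{0,0\}=0$.

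Second, for the evaluations at $x_i$ and $y_i$, I would directly compare the defining formulas in Definition \ref{def:mu s nu s}: for every index $i$ we have $\mu_{t-1}(x_i)\ge\mu_t(x_i)$ (equal on $i\ne t-1$ and strictly greater at $i=t-1$), and likewise $\mu_{t-1}(y_i)\ge\mu_t(y_i)$. Combining this with $\nudown_{t-1}(x_i)=\mu_{t-1}(x_i)$, $\nudown_{t-1}(y_i)=\mu_{t-1}(y_i)$ from \refeq{eq:value xi}--\refeq{eq:value yi} and with $\nu_{0k(0)}(x_i)=\mu_t(x_i)$, $\nu_{0k(0)}(y_i)=\mu_t(y_i)$ from Lemma \ref{lem:nu 0j in c}, the maximum is attained by $\nudown_{t-1}$, giving $\nu_{10}(x_i)=\mu_{t-1}(x_i)$ and $\nu_{10}(y_i)=\mu_{t-1}(y_i)$. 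Since $\mu_{t-1}(x_i)-\mu_{t-1}(y_i)=\mu(x_i)-\mu(y_i)=\qedist(x_i,y_i)$, this shows $\nu_{10}\in C^{(\epsilon)}$.

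Third, for $\nu_{10}(z)\le\nuup_{t-1}(z)$, I would handle the two constituents separately. For $\nudown_{t-1}$ the bound is immediate from Lemma \ref{lem:up geq down}. For $\nu_{0k(0)}$, observe that $\nuup_s(z)=\min_{x_i\le z}\{\mu_s(x_i)-\qedist(x_i,z)\}$ is monotone in the $\mu_s(x_i)$; the inequality $\mu_{t-1}\ge\mu_t$ at each $x_i$ therefore yields $\nuup_{t-1}(z)\ge\nuup_t(z)$ for all $z\in P^+$. On the other hand, by construction $\nu_{0k(0)}(z)$ equals $\nu_{00}(z)=\nudown_t(z)$ off $F_0$ and $\nudown_t(z)+1$ on $F_0$; in the latter case $z\in F_0$ means $\nudown_t(z)<\nuup_t(z)$, so $\nu_{0k(0)}(z)\le\nuup_t(z)$ holds everywhere. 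Chaining these gives $\nu_{0k(0)}(z)\le\nuup_t(z)\le\nuup_{t-1}(z)$, and taking the maximum with the previous bound proves $\nu_{10}(z)\le\nuup_{t-1}(z)$.

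The only mildly delicate point is the last step: one must be careful that the ``gain'' of $+1$ picked up on the set $F_0$ does not push $\nu_{0k(0)}$ past $\nuup_{t-1}$. The monotonicity $\nuup_{t-1}\ge\nuup_t$ together with the defining inequality $\nudown_t<\nuup_t$ on $F_0$ is exactly what makes this go through, so once this observation is in hand the rest is bookkeeping against Definition \ref{def:mu s nu s}.
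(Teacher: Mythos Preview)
Your proof is correct and follows essentially the same route as the paper's. The only cosmetic difference is that you bound each constituent $\nudown_{t-1}$ and $\nu_{0k(0)}$ separately by $\nuup_{t-1}$ and then take the max, whereas the paper splits according to which constituent realizes $\nu_{10}(z)$; the underlying observations (Lemma~\ref{lem:up geq down}, the monotonicity $\nuup_{t-1}\ge\nuup_t$ coming from $\mu_{t-1}\ge\mu_t$, and the $F_0$ case split for $\nu_{0k(0)}$) are identical.
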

\begin{proof}
The first part of the assertion follows from
Lemmas \ref{lem:op nu} and \ref{lem:nu 0j in c},
equalities \refeq{eq:value xi}, \refeq{eq:value yi}
and the definitions of $\mu_{t-1}$ and $\mu_t$.

Let $z$ be an arbitrary element of $P^+$.
If $\nu_{10}(z)=\nudown_{t-1}(z)$, then by Lemma \ref{lem:up geq down},
we see that $\nu_{10}(z)\leq \nuup_{t-1}(z)$.
Suppose that $\nu_{10}(z)=\nu_{0k(0)}(z)$.
If $z\not\in F_0$, then 
$$
\nu_{0k(0)}(z)=\nudown_t(z)=\nuup_t(z)\leq\nuup_{t-1}(z)
$$
by the definition of $\nuup_t$ and $\nuup_{t-1}$.
If $z\in F_0$, then $\nu_{00}(z)<\nuup_t(z)$
by the definition of $F_0$.
Therefore, 
$$
\nu_{0k(0)}(z)=\nu_{00}(z)+1\leq\nuup_t(z)\leq\nuup_{t-1}(z).
$$
\end{proof}

We set $F_1\define\{z\in P\mid \nu_{10}(z)<\nuup_{t-1}(z)\}
\setminus F_0$ and set
$F_1=\{z_{11}, \ldots, z_{1k(1)}\}$
so that $z_{11}$, \ldots, $z_{1k(1)}$ is a linear extension of $F_1$.
We also set $F_{1j}\define\{z_{11}$, \ldots, $z_{1j}\}$ and
for $z\in P^+$,
$$
\nu_{1j}(z)=
\left\{\begin{array}{ll}
\nu_{10}(z)\quad&\mbox{if $z\not\in F_{1j}$}\\
\nu_{10}(z)+1\quad&\mbox{if $z\in F_{1j}$}
\end{array}
\right.
$$
for $1\leq j\leq k(1)$.
Then by the same argument as the proof of Lemma \ref{lem:nu 0j in c},
we see that for any $1\leq j\leq k(1)$,
$\nu_{1j}$ is an element of $C^{(\epsilon)}$ with
$\nu_{1j}(x_i)=\mu_{t-1}(x_i)$ and
$\nu_{1j}(y_i)=\mu_{t-1}(y_i)$ for any $0\leq i\leq t$.

Set $\nu_{20}\define\max\{\nu_{1k(1)}, \nudown_{t-2}\}$.
Then by the same argument as the proof of Lemma \ref{lem:nu 10 leq up},
we see that $\nu_{20}$ is an element of $C^{(\epsilon)}$
such that $\nu_{20}(x_i)=\mu_{t-2}(x_i)$ and
$\nu_{20}(y_i)=\mu_{t-2}(y_i)$ for $0\leq i\leq t$
and $\nu_{20}(z)\leq\nuup_{t-2}(z)$ for any $z\in P^+$.
Thus, we can repeat this argument by setting
$F_2\define\{z\in P\mid \nu_{20}(z)<\nuup_{t-2}(z)\}
\setminus(F_0\cup F_1)$
and taking a linear extension 
$F_2=\{z_{21},\ldots, z_{2k(2)}\}$, setting
$F_{2j}\define\{z_{21},\ldots, z_{2j}\}$
and for $z\in P^+$,
$$
\nu_{2j}(z)=
\left\{\begin{array}{ll}
\nu_{20}(z)\quad&\mbox{if $z\not\in F_{2j}$}\\
\nu_{20}(z)+1\quad&\mbox{if $z\in F_{2j}$}
\end{array}
\right.
$$
for $1\leq j\leq k(2)$ and so on.

Finally, we define $k(0)+k(1)+\cdots+k(t)+t+1$ elements 
$$
\nu_{00}, \nu_{01}, \ldots, \nu_{0k(0)},
\nu_{10}, \nu_{11}, \ldots, \nu_{1k(1)},
\ldots,
\nu_{t0}, \nu_{t1}, \ldots, \nu_{tk(t)}
$$
of $C^{(\epsilon)}$.
Since 
$$
\nu_{ij}(z)-\nu_{i,j-1}(z)=
\left\{\begin{array}{ll}
1&\quad\mbox{if $z=z_{ij}$}\\
0&\quad\mbox{otherwise}
\end{array}\right.
$$
for any $0\leq i\leq t$ and $1\leq j\leq k(i)$
and
$$
\nu_{i0}(y_j)-\nu_{i-1,k(i-1)}(y_j)=
\left\{\begin{array}{ll}
1\quad\mbox{if $j=t-i$}\\
0\quad\mbox{if $j\neq t-i$}\\
\end{array}\right.
$$
for any $1\leq i\leq t$,
we see that 
$$
\nu_{00}, \nu_{01}, \ldots, \nu_{0k(0)},
\nu_{10}, \nu_{11}, \ldots, \nu_{1k(1)},
\ldots,
\nu_{t0}, \nu_{t1}, \ldots, \nu_{tk(t)}
$$
are affinely independent,
since $F_i\cap F_j=\emptyset$ for $i\neq j$ and 
$y_j\not\in F_i$ for any $i$ and $j$.
Since $\nu_{ij}\in C^{(\epsilon)}$ for any $0\leq i\leq t$
and $0\leq j\leq k(i)$, we see that
\begin{equation}
\dim C^{(\epsilon)}\geq
k(0)+k(1)+\cdots+k(t)+t.
\mylabel{eq:dim c lb}
\end{equation}

Next we set $F=F_0\cup F_1\cup\cdots\cup F_t$ and  state the following.

\begin{lemma}
\mylabel{lem:in g char}
For $w\in P$ the following conditions are equivalent.
\begin{enumerate}
\item
\mylabel{item:nin f}
$w\not\in F$.
\item
\mylabel{item:up down eq}
$\nudown_s(w)=\nuup_s(w)$ for any $s$ with $0\leq s\leq t$.
\item
\mylabel{item:in g}
$w\in G$.
\end{enumerate}
\end{lemma}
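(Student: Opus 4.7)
The plan is to establish the cycle of implications $(3)\Rightarrow(2)\Rightarrow(1)$ and $(1)\Rightarrow(2)\Rightarrow(3)$; effectively proving $(1)\Leftrightarrow(2)$ by induction on the construction, and $(2)\Leftrightarrow(3)$ by a geometric case analysis.

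For $(3)\Rightarrow(2)$: if $w\in G_j$, equation \refeq{eq:nuz fixed} applied to any $\nu\in C^{(\epsilon)}$ forces $\nu(w)=\nu(y_j)+\qedist(w,y_j)$. Both $\nudown_s$ and $\nuup_s$ lie in $C^{(\epsilon)}$ (they are in $\TTTTT^{(\epsilon)}(P)$ and satisfy $\nu(x_i)-\nu(y_i)=\qedist(x_i,y_i)$ since the $\mu_s$-shift cancels), and both equal $\mu_s(y_j)$ at $y_j$ by \refeq{eq:value yi}. Hence $\nudown_s(w)=\nuup_s(w)=\mu_s(y_j)+\qedist(w,y_j)$ for all $s$.

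For $(1)\Leftrightarrow(2)$: I induct on $i$. The key observation is that $\nudown_s$ is pointwise nonincreasing in $s$ (because $\mu_s(y_i)=\mu(y_i)-\mathbb{1}[i<s]$ is nonincreasing in $s$). If $w\notin F_0\cup\cdots\cup F_{i-1}$, then inductively $\nu_{j,0}(w)=\nudown_{t-j}(w)$ for $j=0,\ldots,i-1$, and $\nu_{i-1,k(i-1)}(w)=\nu_{i-1,0}(w)=\nudown_{t-i+1}(w)$. Therefore $\nu_{i0}(w)=\max\{\nudown_{t-i+1}(w),\nudown_{t-i}(w)\}=\nudown_{t-i}(w)$. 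Combined with $\nu_{i0}(w)\leq\nuup_{t-i}(w)$ (Lemma \ref{lem:nu 10 leq up}) and Lemma \ref{lem:up geq down}, we get $w\notin F_i\iff\nudown_{t-i}(w)=\nuup_{t-i}(w)$. Ranging over $i=0,\ldots,t$ yields the equivalence.

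For $(2)\Rightarrow(3)$, the main obstacle: Apply the hypothesis at $s=0$, and let $i^*$ realize the minimum in $\nuup_0(w)$ and $j^*$ the maximum in $\nudown_0(w)$. Setting $v:=\nudown_0(w)=\nuup_0(w)$, we obtain
\begin{equation*}
\mu(x_{i^*})-\mu(y_{j^*})=\qedist(x_{i^*},w)+\qedist(w,y_{j^*})\leq\qedist(x_{i^*},y_{j^*}).
\end{equation*}
\condn\ gives $j^*\geq i^*-1$. If $j^*\geq i^*$, then $\mu(x_{i^*})-\mu(y_{j^*})=\qe(x_{i^*},y_{i^*},\ldots,x_{j^*},y_{j^*})$, and the \qered\ condition forces strict inequality against $\qedist(x_{i^*},y_{j^*})$ unless $j^*=i^*$; combined with subadditivity this yields $j^*=i^*$ and $w\in G_{i^*}$. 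If $j^*=i^*-1$, I apply the hypothesis at $s=i^*$: the $\mu_s$-shift lowers the candidate values at indices $<i^*$ by $1$ and leaves those at indices $\geq i^*$ unchanged. The equality $\nudown_{i^*}(w)=\nuup_{i^*}(w)\in\{v-1,v\}$ forces either (a) some $j\geq i^*$ with $y_j\geq w$ achieves $\mu(y_j)+\qedist(w,y_j)=v$, giving the pair $(i^*,j)$ with $j\geq i^*$, reducing to the first case, or (b) some $i^{**}<i^*$ with $x_{i^{**}}\leq w$ achieves $\mu(x_{i^{**}})-\qedist(x_{i^{**}},w)=v$, giving the pair $(i^{**},j^*)$ with $j^*=i^*-1\geq i^{**}$, again reducing to the first case. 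The hard part is setting up this case split cleanly and verifying that no other possibility is compatible with the \qered\ condition and condition N.
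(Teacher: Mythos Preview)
Your proof is correct and follows essentially the same route as the paper's. The paper proves the cycle $(1)\Rightarrow(2)\Rightarrow(3)\Rightarrow(1)$, while you prove $(1)\Leftrightarrow(2)$ and $(2)\Leftrightarrow(3)$; the $(1)\Leftrightarrow(2)$ induction and the $(3)\Rightarrow(2)$ via equation~\refeq{eq:nuz fixed} match the paper's arguments almost verbatim. The one organizational difference is in $(2)\Rightarrow(3)$: the paper chooses $j$ \emph{maximal} and $i$ \emph{minimal} among the realizers at $s=0$, which immediately rules out cases (a) and (b) of your dichotomy and lets the $s=i$ step produce the strict inequality $\nudown_i(w)<\nudown_0(w)=\nuup_0(w)\leq\nuup_i(w)$ directly; your version instead allows an arbitrary realizing pair and, when $j^*=i^*-1$, uses the $s=i^*$ hypothesis to locate a better pair and reduce to the $j\geq i$ case. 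Both arguments hinge on the same two facts---the strict inequality from the \qered\ condition and the $\mu_s$-shift---so the difference is purely cosmetic, though the paper's extremal choice of $(i,j)$ makes the $j=i-1$ case a one-line contradiction rather than a further case split.
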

\begin{proof}
\ref{item:nin f}$\Rightarrow$\ref{item:up down eq}:
Since $\nu_{00}=\nudown_t$ and $w\not\in F_0
=\{z\in P\mid \nu_{00}(z)<\nuup_t(z)\}$,
we see that $\nudown_t(w)=\nuup_t(w)$.
Further, $\nu_{0k(0)}(w)=\nu_{00}(w)=\nudown_t(w)$,
since $w\not\in F_0$.
Therefore, $\nu_{10}(w)=\max\{\nudown_{t-1}(w),\nudown_t(w)\}
=\nudown_{t-1}(w)$.

Since $w\not\in F_1=\{z\in P\mid \nu_{10}(z)<\nuup_{t-1}(z)\}$,
we see that $\nudown_{t-1}(w)=\nu_{10}(w)=\nuup_{t-1}(w)$.
Further, $\nu_{1k(1)}(w)=\nu_{10}(w)=\nudown_{t-1}(w)$,
since $w\not\in F_1$.
By repeating this argument, we see \ref{item:up down eq}.

\ref{item:up down eq}$\Rightarrow$\ref{item:in g}:
By assumption, we see that $\nudown_0(w)=\nuup_0(w)$.
By the definition of $\nudown_0$ and $\nuup_0$, we see that 
there are $i$ and $j$ such that
$x_i\leq w\leq y_j$, $\nudown_0(w)=\mu_0(y_j)+\qedist(w,y_j)$
and $\nuup_0(w)=\mu_0(x_i)-\qedist(x_i,w)$.
Take $j$ maximal and $i$ minimal.
We shall show that $i=j$.
Since $x_i\leq y_j$ and
$y_0$, $x_1$, \ldots, $y_{t-1}$, $x_t$
satisfies \condn, we see that $j\geq i-1$.

First suppose that $j\geq i+1$.
Then
\begin{eqnarray*}
\nuup_0(w)-\nudown_0(w)&=&
\mu_0(x_i)-\qedist(x_i,w)-\qedist(w,y_j)-\mu_0(y_j)\\
&\geq&\mu(x_i)-\mu(y_j)-\qedist(x_i,y_j)\\
&=&\qe(x_i,y_i\ldots,x_j,y_j)-\qedist(x_i,y_j)\\
&>&0,
\end{eqnarray*}
since $y_0$, $x_1$, \ldots, $y_{t-1}$, $x_t$ is \qered.
This contradicts to the assumption.

Next suppose that $j=i-1$.
Take $x_\ell$ and $y_{\ell'}$
such that $x_{\ell}\leq w\leq y_{\ell'}$,
$\nudown_i(w)=\mu_i(y_{\ell'})+\qedist(w,y_{\ell'})$
and
$\nuup_i(w)=\mu_i(x_{\ell})-\qedist(x_\ell,w)$.
If $\ell'\geq i=j+1$, then
\begin{eqnarray*}
\nudown_i(w)&=&
\mu_i(y_{\ell'})+\qedist(w,y_{\ell'})\\
&=&\mu_0(y_{\ell'})+\qedist(w,y_{\ell'})\\
&<&\nudown_0(w),
\end{eqnarray*}
since we took $j$ maximal.
If $\ell'<i$, then 
\begin{eqnarray*}
\nudown_i(w)&=&
\mu_i(y_{\ell'})+\qedist(w,y_{\ell'})\\
&=&\mu_0(y_{\ell'})-1+\qedist(w,y_{\ell'})\\
&<&\nudown_0(w).
\end{eqnarray*}

On the other hand,
if $\ell<i$, 
\begin{eqnarray*}
\nuup_i(w)&=&
\mu_i(x_\ell)-\qedist(x_\ell,w)\\
&=&\mu_0(x_\ell)-1-\qedist(x_\ell,w)\\
&\geq&\nuup_0(w),
\end{eqnarray*}
since we took $i$ minimal.
If $\ell\geq i$, then
\begin{eqnarray*}
\nuup_i(w)&=&
\mu_i(x_\ell)-\qedist(x_\ell,w)\\
&=&\mu_0(x_\ell)-\qedist(x_\ell,w)\\
&\geq&\nuup_0(w).
\end{eqnarray*}
Thus, we see that
$$
\nudown_i(w)<\nudown_0(w)=\nuup_0(w)\leq\nuup_i(w).
$$
This contradicts to the assumption.

Therefore, $j=i$ and we see that
\begin{eqnarray*}
0&=&\nuup_0(w)-\nudown_0(w)\\
&=&\mu(x_i)-\qedist(x_i,w)-\qedist(w,y_i)-\mu(y_i)\\
&=&\qedist(x_i,y_i)-\qedist(x_i,w)-\qedist(w,y_i).
\end{eqnarray*}
This means $w\in G_i$.

\ref{item:in g}$\Rightarrow$\ref{item:nin f}:
Suppose that $w\in G_i$.
Let $s$ be an arbitrary integer with $0\leq s\leq t$.
Since $\nuup_s(w)\leq\mu_s(x_i)-\qedist(x_i,w)$ and
$\nudown_s(w)\geq\mu_s(y_i)+\qedist(w,y_i)$
by the definition of $\nudown_s$ and $\nuup_s$, we see that
\begin{eqnarray*}
\nuup_s(w)-\nudown_s(w)&\leq&
\mu_s(x_i)-\qedist(x_i,w)-\qedist(w,y_i)-\mu_s(y_i)\\
&=&\mu_s(x_i)-\mu_s(y_i)-\qedist(x_i,y_i)\\
&=&0
\end{eqnarray*}
since $w\in G_i$.
Since
$\nudown_s(w)\leq\nu_{t-s,0}(w)\leq\nuup_s(w)$,
we see that
$\nu_{t-s,0}(w)=\nuup_s(w)$.
Therefore, $w\not\in F_{t-s}$.
Since $s$ is an arbitrary integer with $0\leq s\leq t$,
we see that 
$w\not\in F$.
\end{proof}

By the above lemma, we see that $P\setminus G=F$ and therefore
$$
\#(P\setminus G)=\#F=k(0)+k(1)+\cdots+k(t),
$$
since 
$F_i\cap F_j=\emptyset$ if $i\neq j$.
Therefore, by inequalities \refeq{eq:dim c ub}  and \refeq{eq:dim c lb}, we see the following.
\begin{thm}
\mylabel{thm:dim ce}
$\dim C^{(\epsilon)}=\#(P\setminus G)+t$.
\end{thm}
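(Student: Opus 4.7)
The plan is to simply combine the two inequalities that have already been established in the text with the set identification provided by Lemma \ref{lem:in g char}. All the essential combinatorial work has been done in the preceding discussion, so the task at this stage is purely to assemble these pieces.

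First I would recall the upper bound \refeq{eq:dim c ub}, namely $\dim C^{(\epsilon)}\le\#(P\setminus G)+t$. This bound came from the observation that, by \refeq{eq:nuz fixed}, any $\nu\in C^{(\epsilon)}$ is completely determined on $G$ once the values $\nu(y_0),\ldots,\nu(y_{t-1})$ are known (and $\nu(y_t)=\nu(\infty)=0$), so the affine hull of $C^{(\epsilon)}$ lives in a space of dimension at most $\#(P\setminus G)+t$.

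Next I would recall the lower bound \refeq{eq:dim c lb}, namely
$$\dim C^{(\epsilon)}\ge k(0)+k(1)+\cdots+k(t)+t,$$
which was established by exhibiting the $k(0)+k(1)+\cdots+k(t)+t+1$ affinely independent elements
$$\nu_{00},\nu_{01},\ldots,\nu_{0k(0)},\nu_{10},\ldots,\nu_{1k(1)},\ldots,\nu_{t0},\ldots,\nu_{tk(t)}$$
of $C^{(\epsilon)}$; their affine independence followed from the disjointness of $F_0,\ldots,F_t$ together with the property that $y_j\notin F_i$ for all $i,j$, so that the successive increments are along distinct coordinate directions.

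Finally I would invoke Lemma \ref{lem:in g char}, which identifies $P\setminus G$ with $F=F_0\cup\cdots\cup F_t$. Since the $F_i$ are pairwise disjoint by their very construction (each $F_i$ was defined with $F_0\cup\cdots\cup F_{i-1}$ removed), we obtain
$$\#(P\setminus G)=\#F=k(0)+k(1)+\cdots+k(t).$$
Substituting this into the lower bound \refeq{eq:dim c lb} yields $\dim C^{(\epsilon)}\ge\#(P\setminus G)+t$, and combining with the upper bound \refeq{eq:dim c ub} gives the desired equality. There is no genuine obstacle here; the only subtle point is making sure the reader sees that Lemma \ref{lem:in g char} is exactly what is needed to translate the combinatorial count $k(0)+\cdots+k(t)$ into the intrinsic quantity $\#(P\setminus G)$.
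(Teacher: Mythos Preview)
Your proposal is correct and follows exactly the paper's approach: the paper itself simply notes that Lemma \ref{lem:in g char} gives $\#(P\setminus G)=\#F=k(0)+\cdots+k(t)$ (using disjointness of the $F_i$), and then deduces the theorem directly from inequalities \refeq{eq:dim c ub} and \refeq{eq:dim c lb}. Your write-up is a faithful, slightly more explicit rendering of precisely this assembly.
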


\begin{remark}\rm
\mylabel{rem:ce full}
By equation \refeq{eq:nuz fixed},
$\nu(z)=\nu(y_i)+\qedist(z,y_i)$ for any $\nu\in C^{(\epsilon)}$ and $z\in G_i$.
Therefore, by Theorem \ref{thm:dim ce}, we see that $C^{(\epsilon)}$ is essentially
a full dimensional convex polytope in $\RRR^{(P\setminus G)\cup\{y_0,\ldots,y_{t-1}\}}$.
\end{remark}

By considering the case where the sequence under consideration in this section is the 
empty sequence, we see by Theorem \ref{thm:dim ce}, the following.

\begin{cor}
\mylabel{cor:non min max}
If $t=0$, then
$\dim C^{(1)}=\#\pnonmax$, where $\pnonmax\define\{z\in P\mid z$ is not in any 
chain of $P$ of maximal length$\}$ (resp.\ 
$\dim C^{(-1)}=\#\pnonmin$, 
where $\pnonmin=\{z\in P\mid z$ is not in any maximal chain of $P$ of minimal length$\}$).
\end{cor}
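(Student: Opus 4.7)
The plan is to derive this from Theorem \ref{thm:dim ce} by computing $G$ in chain-theoretic terms. Since $t=0$, after adjoining $x_0$ and $y_0=\infty$ we are left with the single block $x_0,\infty$, so $G=G_0=\{z\in P^+\mid \qedist(x_0,z)+\qedist(z,\infty)=\qedist(x_0,\infty)\}$; Theorem \ref{thm:dim ce} then yields $\dim C^{(\epsilon)}=\#(P\setminus G_0)$, and the corollary reduces to identifying $P\setminus G_0$ with $\pnonmax$ when $\epsilon=1$ and with $\pnonmin$ when $\epsilon=-1$.

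The main step I would carry out is a characterization of $G_0$: I would show that $z\in G_0$ if and only if $z$ lies on some saturated chain from $x_0$ to $\infty$ in $P^+$ whose $\qedist$-length is $\qedist(x_0,\infty)$. For the forward direction, a realizing chain split at $z$ yields two saturated subchains, and the general inequality $\qedist(x_0,z)+\qedist(z,\infty)\leq\qedist(x_0,\infty)$ noted after Definition \ref{def:dist} forces each to attain its own $\qedist$. For the reverse direction, one concatenates saturated chains that realize $\qedist(x_0,z)$ and $\qedist(z,\infty)$ respectively into a single saturated chain from $x_0$ to $\infty$ passing through $z$.

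To finish, I would invoke the bijection between saturated chains from $x_0$ to $\infty$ in $P^+$ and maximal chains of $P$ — valid because $x_0$ is the minimum of $P$ and $\infty$ covers in $P^+$ exactly the maximal elements of $P$ — and then specialize $\epsilon$. Since $\qdist{1}$ measures maximum saturated chain length, $G_0\cap P$ becomes the set of elements of $P$ lying on some longest maximal chain, so $P\setminus G_0=\pnonmax$; since $\qdist{-1}=-\dist$, the analogous characterization gives shortest maximal chains and $P\setminus G_0=\pnonmin$. No substantial obstacle is anticipated: the content lies in the chain-splitting step, and given Theorem \ref{thm:dim ce} the remainder is bookkeeping.
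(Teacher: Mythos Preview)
Your approach is correct and is precisely the one the paper intends: the paper simply says the corollary follows from Theorem \ref{thm:dim ce} by specializing to the empty sequence, and you have unpacked exactly the identification $P\setminus G_0=\pnonmax$ (resp.\ $\pnonmin$) that this requires. One small slip: you have the labels ``forward'' and ``reverse'' interchanged in the biconditional argument---the splitting argument proves that an element on a realizing chain lies in $G_0$, while the concatenation argument proves the converse---but the mathematical content is unaffected.
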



\section{Canonical and anticanonical analytic spreads}

\mylabel{sec:anal spread}

In this section, we describe 
the fiber cones $\bigoplus_{n\geq0}\omega^n/\mmmm\omega^n$ and
$\bigoplus_{n\geq0}(\omega^{(-1)})^n/\mmmm(\omega^{(-1)})^n$
and analytic spreads 
$\dim\bigoplus_{n\geq0}\omega^n/\mmmm\omega^n$ and
$\dim\bigoplus_{n\geq0}(\omega^{(-1)})^n/\mmmm(\omega^{(-1)})^n$
of the canonical and anticanonical ideals of the Hibi ring $\rkh$
in terms of the notation introduced in the previous section,
where $\mmmm$ is the irrelevant maximal ideal of $\rkh$.
By Theorem \ref{thm:omega n basis}, we see that
$\omega^n=\omega^{(n)}$ and $(\omega^{(-1)})^n=\omega^{(-n)}$
for a positive integer $n$.
Therefore, we consider the ring
$$
\bigoplus_{n\geq 0}\omega^{(n\epsilon)}/\mmmm\omega^{(n\epsilon)},
$$
where $\epsilon=\pm1$.

Set $N^{(\epsilon)}\define\{(y_0,x_1,\ldots, y_{t-1},x_t)\mid
y_0$, $x_1$, \ldots, $y_{t-1}$, $x_t$ is a \qered\ \scn$\}$.
For $(y_0, x_1, \ldots, y_{t-1}, x_t)\in N^{(\epsilon)}$, we 
denote by $C^{(\epsilon)}_{(y_0,x_1,\ldots, y_{t-1},x_t)}
\subset\RRR^P$
 the 
convex polytope defined by $y_0$, $x_1$, \ldots, $y_{t-1}$ , $x_t$
in the previous section and
$R^{(\epsilon)}_{(y_0,x_1,\ldots, y_{t-1},x_t)}
\subset\KKK[T^{\pm1}_x\mid x\in P\setminus\{x_0\}][T_{x_0}]$
 the Ehrhart ring defined
by $C^{(\epsilon)}_{(y_0,x_1,\ldots, y_{t-1},x_t)}$ 
over $\KKK$.
Further, we denote by $G_{i,(y_0,x_1,\ldots,y_{t-1},x_t)}^{(\epsilon)}$
and $G_{(y_0,x_1,\ldots,y_{t-1},x_t)}^{(\epsilon)}$ the sets denoted by
$G_i$ or $G$ in the previous section.
Then by Theorem \ref{thm:dim ce}, we see that
$\dim C^{(\epsilon)}_{(y_0,x_1,\ldots,y_{t-1},x_t)}=
\#(P\setminus G_{(y_0,x_1,\ldots, y_{t-1},x_t)})+t$
for any $(y_0,x_1,\ldots, y_{t-1},x_t)\in N^{(\epsilon)}$.
Moreover, for any positive integer $n$,
$nC^{(\epsilon)}_{(y_0,x_1,\ldots, y_{t-1},x_t)}
=C^{(n\epsilon)}_{(y_0,x_1,\ldots, y_{t-1},x_t)}$.
Further by Lemma \ref{lem:mo3.2},
we see that any $\nu\in C^{(n\epsilon)}_{(y_0,x_1,\ldots, y_{t-1},x_t)}\cap\ZZZ^{P}$
is a minimal element of $\TTTTT^{(n\epsilon)}(P)$ and therefore
$T^\nu$ is a generator of $\omega^{(n\epsilon)}$, i.e.,
the residue class of $T^\nu$ is
a basis element of $\omega^{(n\epsilon)}/\mmmm\omega^{(n\epsilon)}$.
Therefore, $R^{(\epsilon)}_{(y_0,x_1,\ldots,y_{t-1},x_t)}$ is
embedded in 
$\bigoplus_{n\geq 0}\omega^{(n\epsilon)}/\mmmm\omega^{(n\epsilon)}$.

Conversely, assume that 
$T^\nu$ is a generator of $\omega^{(n\epsilon)}$,
where $n$ is a positive integer.
Then by Lemma \ref{lem:mo3.3}, we see that there is a \qnered\ sequence
$y_0$, $x_1$, \ldots, $y_{t-1}$, $x_t$ with \condn\ such that
$\nu(x_i)-\nu(y_i)=\qnedist(x_i,y_i)$ for $0\leq i\leq t$, where we set
$y_t=\infty$.
Since a \scn\ is \qered\ if and only if \qnered,
we see that
$(y_0,x_1, \ldots, y_{t-1},x_t)\in N^{(\epsilon)}$.
Further, since $\nu\in\TTTTT^{(n\epsilon)}(P)$, we see that
$\nu\in C^{(n\epsilon)}_{(y_0,x_1,\ldots,y_{t-1},x_t)}\cap\ZZZ^{P}
=nC^{(\epsilon)}_{(y_0,x_1,\ldots,y_{t-1},x_t)}\cap\ZZZ^{P}$ and
we can consider 
that $T^\nu$ is an element of 
$R^{(\epsilon)}_{(y_0,x_1,\ldots,y_{t-1},x_t)}$
with degree $n$.
Thus, we see that
\begin{equation}
\mylabel{eq:fc ne}
\bigoplus_{n\geq 0}\omega^{(n\epsilon)}/\mmmm\omega^{(n\epsilon)}=
\sum_{(y_0,x_1,\ldots,y_{t-1},x_t)\in N^{(\epsilon)}}
R^{(\epsilon)}_{(y_0,x_1,\ldots,y_{t-1},x_t)}.
\end{equation}
Since there are only finitely many sequences with \condn,
we see,
 by considering the Hilbert function of
$\bigoplus_{n\geq0}\omega^{(n\epsilon)}/\mmmm\omega^{(n\epsilon)}$,
that
\begin{eqnarray*}
\dim(\bigoplus_{n\geq0}\omega^{(n\epsilon)}/\mmmm\omega^{(n\epsilon)})
&=&\max_{(y_0,x_1,\ldots, y_{t-1},x_t)\in N^{(\epsilon)}}
\dim 
R^{(\epsilon)}_{(y_0,x_1,\ldots,y_{t-1},x_t)}
\\
&=&\max_{(y_0,x_1,\ldots, y_{t-1},x_t)\in N^{(\epsilon)}}
\dim 
C^{(\epsilon)}_{(y_0,x_1,\ldots,y_{t-1},x_t)}+1\\
&=&\max_{(y_0,x_1,\ldots, y_{t-1},x_t)\in N^{(\epsilon)}}
\#(P\setminus 
G^{(\epsilon)}_{(y_0,x_1,\ldots,y_{t-1},x_t)})+t+1
\end{eqnarray*}
Therefore, we see the following.

\begin{thm}
\mylabel{thm:can anal spr}
The fiber cone of the canonical (resp.\ anticanonical) ideal of the Hibi ring
$\rkh$ is 
$$
\sum_{(y_0,x_1,\ldots,y_{t-1},x_t)\in N^{(1)}}
R^{(1)}_{(y_0,x_1,\ldots,y_{t-1},x_t)}
$$
(resp.\ 
$\sum_{(y_0,x_1,\ldots,y_{t-1},x_t)\in N^{(-1)}}
R^{(-1)}_{(y_0,x_1,\ldots,y_{t-1},x_t)}$)
and 
the canonical (resp.\ anticanonical) analytic spread 
is
$$
\max_{(y_0,x_1,\ldots, y_{t-1},x_t)\in N^{(1)}}
\dim 
C^{(1)}_{(y_0,x_1,\ldots,y_{t-1},x_t)}+1
$$
(resp.\
$
\max_{(y_0,x_1,\ldots, y_{t-1},x_t)\in N^{(-1)}}
\dim 
C^{(-1)}_{(y_0,x_1,\ldots,y_{t-1},x_t)}+1
$).
\end{thm}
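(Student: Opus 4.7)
The theorem is essentially a repackaging of what has just been established in the preceding paragraphs, so my proof plan is to assemble those pieces carefully, making sure the reduction to the uniform case $\bigoplus_{n\geq 0}\omega^{(n\epsilon)}/\mmmm\omega^{(n\epsilon)}$ is explicit and that the dimension computation uses a clean Hilbert-function argument.

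First I would note that by Theorem \ref{thm:omega n basis}, $\omega^n=\omega^{(n)}$ and $(\omega^{(-1)})^n=\omega^{(-n)}$ for every positive integer $n$, so both fiber cones are instances of $\bigoplus_{n\geq 0}\omega^{(n\epsilon)}/\mmmm\omega^{(n\epsilon)}$ with $\epsilon=1$ or $\epsilon=-1$ respectively. For the identification of the fiber cone as a finite sum of Ehrhart rings, the two inclusions have already been verified in the discussion above: on one hand, for any $(y_0,x_1,\ldots,y_{t-1},x_t)\in N^{(\epsilon)}$, every lattice point $\nu\in nC^{(\epsilon)}_{(y_0,\ldots,x_t)}\cap\ZZZ^P=C^{(n\epsilon)}_{(y_0,\ldots,x_t)}\cap\ZZZ^P$ is a minimal element of $\TTTTT^{(n\epsilon)}(P)$ by Lemma \ref{lem:mo3.2}, so $T^\nu$ is a generator of $\omega^{(n\epsilon)}$ and contributes to a $\KKK$-basis of $\omega^{(n\epsilon)}/\mmmm\omega^{(n\epsilon)}$; on the other hand, Lemma \ref{lem:mo3.3} shows that every generator $T^\nu$ of $\omega^{(n\epsilon)}$ arises from some \qered\ \scn, placing it inside the corresponding polytope and therefore inside $R^{(\epsilon)}_{(y_0,\ldots,x_t)}$. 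Writing these two inclusions as equation \refeq{eq:fc ne} gives the first part of the theorem.

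For the dimension statement I would invoke the Hilbert-function principle: if $A=\bigoplus_{n\geq 0}A_n$ is a finitely generated $\NNN$-graded $\KKK$-algebra written as a finite sum of graded subalgebras $A=\sum_{j=1}^r B_j$, then $\dim_\KKK A_n\leq \sum_j \dim_\KKK (B_j)_n$ and $\dim_\KKK A_n\geq \dim_\KKK (B_j)_n$ for each $j$, so the Hilbert polynomials satisfy $\max_j\deg H_{B_j}=\deg H_A$ and hence $\dim A=\max_j\dim B_j$. Applying this to \refeq{eq:fc ne}, which involves only finitely many summands because there are only finitely many \sscn\ in the finite poset $P$, we obtain
\[
\dim\Bigl(\bigoplus_{n\geq 0}\omega^{(n\epsilon)}/\mmmm\omega^{(n\epsilon)}\Bigr)=\max_{(y_0,\ldots,x_t)\in N^{(\epsilon)}}\dim R^{(\epsilon)}_{(y_0,\ldots,x_t)}.
\]
Since $R^{(\epsilon)}_{(y_0,\ldots,x_t)}$ is the Ehrhart ring of the integral polytope $C^{(\epsilon)}_{(y_0,\ldots,x_t)}$, its Krull dimension equals $\dim C^{(\epsilon)}_{(y_0,\ldots,x_t)}+1$, yielding the stated formula for the analytic spread.

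There is no real obstacle here; the only subtlety is keeping the two inclusions in \refeq{eq:fc ne} straight (both directions rely crucially on the interplay between Lemma \ref{lem:mo3.2} and Lemma \ref{lem:mo3.3}, which translate between generators of $\omega^{(n\epsilon)}$ and lattice points of the polytopes $C^{(n\epsilon)}_{(y_0,\ldots,x_t)}=nC^{(\epsilon)}_{(y_0,\ldots,x_t)}$), and noting the harmless but necessary fact that the sum in \refeq{eq:fc ne} is finite so that the standard Hilbert-function/Krull-dimension comparison applies.
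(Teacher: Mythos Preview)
Your proposal is correct and follows essentially the same approach as the paper: both establish equation \refeq{eq:fc ne} via the two inclusions (Lemma \ref{lem:mo3.2} for one direction, Lemma \ref{lem:mo3.3} for the other), then deduce the dimension formula from a Hilbert-function comparison over the finitely many summands, together with $\dim R^{(\epsilon)}_{(y_0,\ldots,x_t)}=\dim C^{(\epsilon)}_{(y_0,\ldots,x_t)}+1$. The only difference is that you spell out the Hilbert-function inequalities $\max_j\dim_\KKK(B_j)_n\leq\dim_\KKK A_n\leq\sum_j\dim_\KKK(B_j)_n$ explicitly, whereas the paper simply says ``by considering the Hilbert function.''
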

As a special case, we see
by Theorems \ref{thm:pm level cri},
\ref{thm:can anal spr}
and Corollary \ref{cor:non min max},
the following fact whose anticanonical level part is 
\cite[Theorem 4.6]{pag}.

\begin{cor}
If $\rkh$ is level (resp.\ anticanonical level), then the 
canonical (resp.\ anticanonical) analytic spread of $\rkh$
is
$\#\pnonmax+1$
(resp.\
$\#\pnonmin+1$).
\end{cor}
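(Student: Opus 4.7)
The proof is essentially an assembly of three results already established in the excerpt, so the plan is to chase the definitions through each of them in turn. Let $\epsilon = 1$ for the level case and $\epsilon = -1$ for the anticanonical level case, and write the argument uniformly in $\epsilon$.

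First, I would invoke Theorem \ref{thm:pm level cri}: under the hypothesis that $\rkh$ is level (respectively anticanonical level), the only \qered\ sequence with condition N is the empty sequence. Consequently, the indexing set $N^{(\epsilon)}$ appearing in Theorem \ref{thm:can anal spr} reduces to the singleton consisting of the empty sequence.

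Next I would feed this into Theorem \ref{thm:can anal spr}. The analytic spread of the canonical (resp.\ anticanonical) ideal of $\rkh$ is
\[
\dim\Bigl(\bigoplus_{n\geq 0}\omega^{(n\epsilon)}/\mmmm\omega^{(n\epsilon)}\Bigr)
=\max_{(y_0,x_1,\ldots,y_{t-1},x_t)\in N^{(\epsilon)}}\dim C^{(\epsilon)}_{(y_0,x_1,\ldots,y_{t-1},x_t)}+1,
\]
and since $N^{(\epsilon)}$ now has a unique element, namely the empty sequence (corresponding to $t=0$), the maximum collapses to $\dim C^{(\epsilon)}+1$ for that single polytope.

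Finally I would apply Corollary \ref{cor:non min max}, which handles precisely the case $t=0$: it gives $\dim C^{(1)}=\#\pnonmax$ in the level case and $\dim C^{(-1)}=\#\pnonmin$ in the anticanonical level case. Adding $1$ yields $\#\pnonmax+1$ and $\#\pnonmin+1$ respectively, which is the claim. There is no genuine obstacle here — all the nontrivial work (describing $\omega^{(n)}$, building the polytopes $C^{(\epsilon)}$ and computing their dimensions, identifying when only the empty sequence is \qered) has already been carried out in the preceding sections; the corollary is simply the outcome of specializing Theorem \ref{thm:can anal spr} to the situation where Theorem \ref{thm:pm level cri} forces $N^{(\epsilon)}$ to be trivial, and then invoking Corollary \ref{cor:non min max} to evaluate the single surviving dimension.
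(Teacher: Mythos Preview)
Your proof is correct and follows exactly the paper's approach: the paper does not give a separate proof but simply states that the corollary follows from Theorems \ref{thm:pm level cri}, \ref{thm:can anal spr} and Corollary \ref{cor:non min max}, which is precisely the chain of implications you spell out.
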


\begin{example}\rm
\mylabel{ex:anal spread}
Let $P_1\setminus\{x_0\}$, $P_2\setminus\{x_0\}$ and $P_3\setminus\{x_0\}$ be
the poset with the following Hasse diagram respectively.
\unitlength=.005\textwidth\relax
$$
\vcenter{%
\begin{picture}(40,33)
\put(20,3){\makebox(0,0)[t]{$P_1\setminus\{x_0\}$}}

\put(10,10){\circle*{3}}

\put(10,30){\circle*{3}}
\put(20,20){\circle*{3}}
\put(30,10){\circle*{3}}
\put(30,30){\circle*{3}}

\put(10,10){\line(0,1){20}}
\put(10,30){\line(1,-1){20}}
\put(30,10){\line(0,1){20}}

\put(9,31){\makebox(0,0)[br]{$y$}}
\put(21,21){\makebox(0,0)[bl]{$z$}}
\put(31,9){\makebox(0,0)[tl]{$x$}}

\end{picture}
\hfil
\begin{picture}(60,33)
\put(30,3){\makebox(0,0)[t]{$P_2\setminus\{x_0\}$}}

\put(10,10){\circle*{3}}
\put(10,30){\circle*{3}}
\put(20,20){\circle*{3}}
\put(30,10){\circle*{3}}
\put(30,30){\circle*{3}}
\put(40,20){\circle*{3}}
\put(50,10){\circle*{3}}
\put(50,30){\circle*{3}}

\put(10,10){\line(0,1){20}}
\put(10,30){\line(1,-1){20}}
\put(30,10){\line(0,1){20}}
\put(30,30){\line(1,-1){20}}
\put(50,10){\line(0,1){20}}

\put(9,31){\makebox(0,0)[br]{$y_0$}}
\put(21,21){\makebox(0,0)[bl]{$z_1$}}
\put(31,9){\makebox(0,0)[tl]{$x_1$}}
\put(29,31){\makebox(0,0)[br]{$y_1$}}
\put(41,21){\makebox(0,0)[bl]{$z_2$}}
\put(51,9){\makebox(0,0)[tl]{$x_2$}}
\put(9,9){\makebox(0,0)[tr]{$w_1$}}
\put(51,31){\makebox(0,0)[bl]{$w_2$}}

\end{picture}
\hfil
\begin{picture}(60,33)
\put(30,3){\makebox(0,0)[t]{$P_3\setminus\{x_0\}$}}

\put(10,10){\circle*{3}}
\put(10,30){\circle*{3}}
\put(20,20){\circle*{3}}
\put(30,10){\circle*{3}}
\put(30,20){\circle*{3}}
\put(30,30){\circle*{3}}
\put(40,20){\circle*{3}}
\put(50,10){\circle*{3}}
\put(50,30){\circle*{3}}

\put(10,10){\line(0,1){20}}
\put(10,30){\line(1,-1){20}}
\put(30,10){\line(0,1){20}}
\put(30,30){\line(1,-1){20}}
\put(50,10){\line(0,1){20}}

\put(9,31){\makebox(0,0)[br]{$y_0$}}
\put(21,21){\makebox(0,0)[bl]{$z_1$}}
\put(31,9){\makebox(0,0)[tl]{$x_1$}}
\put(29,31){\makebox(0,0)[br]{$y_1$}}
\put(31,21){\makebox(0,0)[bl]{$z_2$}}
\put(41,21){\makebox(0,0)[bl]{$z_3$}}
\put(51,9){\makebox(0,0)[tl]{$x_2$}}

\end{picture}
}
$$

As for $P_1$, there are two \qmonered\ \sscn:
$y$, $x$ and the empty sequence.
Since
$
P_1\setminus G_{(y,x)}^{(-1)}=P_1\setminus G_{()}^{(-1)}=\{z\}
$,
$\dim C^{(-1)}_{(y,x)}=2$ and 
$\dim C^{(-1)}_{()}=1$
and the anticanonical analytic spread is $3$ and it comes from 
the \qmonered\ \scn\ $y$, $x$.
By the definition of $C^{(-1)}_{(y,x)}$ and $C^{(-1)}_{()}$,
we see that $C^{(-1)}_{()}\subset C^{(-1)}_{(y,x)}$.

As for $P_2$, there are four \qmonered\ \sscn:
$(y_0, x_1, y_1, x_2)$;
$(y_0, x_1)$; $(y_1, x_2)$ 
and the empty sequence.
$P_2\setminus G_{(y_0,x_1,y_1,x_2)}^{(-1)}=\{z_1,z_2\}$,
$P_2\setminus G_{(y_0,x_1)}^{(-1)}=\{z_1,z_2, x_2,w_2\}$,
$P_2\setminus G_{(y_1,x_2)}^{(-1)}=\{w_1, y_0, z_1,z_2\}$ 
and
$P_2\setminus G_{()}^{(-1)}=\{z_1,z_2\}$.
Therefore, we see that $\dim C^{(-1)}_{(y_0,x_1,y_1,x_2)}=4$,
$\dim C^{(-1)}_{(y_0,x_1)}=
\dim C^{(-1)}_{(y_1,x_2)}=5$ and
$\dim C^{(-1)}_{()}=2$, the anticanonical analytic spread is 6 and it comes
from the \qmonered\ \sscn\ $y_0$, $x_1$  and $y_1$, $x_2$.
Further, we see that $C^{(-1)}_{(y_0,x_1)}\cap C^{(-1)}_{(y_1,x_2)}=C^{(-1)}_{()}$.
Moreover,
$C^{(-1)}_{(y_0,x_1)}\cap
C^{(-1)}_{(y_0,x_1,y_1,x_2)}
=\{\nu\colon P_2^+\to\RRR\mid\nu\in C^{(-1)}_{(y_0,x_1,y_1,x_2)}$, $\nu(y_1)=-1\}
=\{\nu\colon P_2^+\to\RRR\mid\nu\in C^{(-1)}_{(y_0,x_1)}$, $\nu(w_2)=-1$, $\nu_(x_2)=-2\}$.
This is a 3-dimensional face of both $C^{(-1)}_{(y_0,x_1,y_1,x_2)}$ and
$C^{(-1)}_{(y_0,x_1)}$.
A similar fact holds for 
$C^{(-1)}_{(y_1,x_2)}\cap
C^{(-1)}_{(y_0,x_1,y_1,x_2)}$.

As for $P_3$, there are two \qmonered\ \sscn:
$y_0$, $x_1$, $y_1$, $x_2$
and the empty sequence.
$P_3\setminus G_{(y_0,x_1,y_1,x_2)}^{(-1)}=\{z_1,z_3\}$
and
$P_3\setminus G_{()}^{(-1)}=\{z_1,z_2, z_3,x_1,y_1\}$.
Therefore, 
$\dim C^{(-1)}_{(y_0,x_1,y_1,x_2)}=4$, $\dim C^{(-1)}_{()}=5$,
the anticanonical analytic spread is 6 and it comes
from the empty sequence.
Further,
$C^{(-1)}_{(y_0,x_1,y_1,x_2)}\cap  C^{(-1)}_{()}=
\{\nu\colon P_3^+\to\RRR\mid\nu\in C^{(-1)}_{()}$, $\nu(y_1)=\nu(z_2)+1=\nu(x_1)+2\}
=\{\nu\colon P_3^+\to\RRR\mid\nu\in C^{(-1)}_{(y_0,x_1,y_1,x_2)}$, $\nu(y_0)=-1\}$,
which is a 3-dimensional face of both 
$C^{(-1)}_{(y_0,x_1,y_1,x_2)}$ and $C^{(-1)}_{()}$.
\end{example}

\begin{prob}\rm
Determine the polytopal complex structure of
$
\{C\mid$ there is $(y_0,\ldots, x_t)\in N^{(\epsilon)}$ such that
$C$ is a face of $C^{(\epsilon)}_{(y_0,\ldots,x_t)}\}$.
\end{prob}

Suppose that $T^{\nu_1}T^{\nu_2}\neq0$ in 
$\bigoplus_{n\geq0}\omega^{(n\epsilon)}/\mmmm\omega^{(n\epsilon)}$.
Set $T^{\nu_j}\in\omega^{(n_j\epsilon)}$ for $j=1$, $2$, 
$n=n_1+n_2$ and $\nu=\nu_1+\nu_2$.
Then by Lemma \ref{lem:mo3.3}, we see that there is a \qnered\ sequence 
$y_0,x_1,\ldots, y_{t-1},x_t$ with \condn\
such that $\nu(x_i)-\nu(y_i)=\qne\dist(x_i,y_i)
=n\qe\dist(x_i,y_i)$ for $0\leq i\leq t$, where $y_t\define\infty$.
Since $\nu_j(x_i)-\nu_j(y_i)\geq n_j\qe\dist(x_i,y_i)$ for any $i$ and $j$,
we see that 
$$
\nu_j(x_i)-\nu_j(y_i)=n_j\qe\dist(x_i,y_i)
$$
for any $i$ and $j$.
Thus, we see that $\nu_1$ and ${\nu_2}$ are elements of 
$C^{(\epsilon)}_{(y_0,x_1,\ldots,y_{t-1},x_t)}$ and the product 
$T^{\nu_1}T^{\nu_2}=T^\nu$ is the one in
$R^{(\epsilon)}_{(y_0,x_1,\ldots,y_{t-1},x_t)}$.

In other words, if we set $\Gamma\define
\{C\mid$ there is $(y_0,\ldots, x_t)\in N^{(\epsilon)}$ such that
$C$ is a face of $C^{(\epsilon)}_{(y_0,\ldots,x_t)}\}$,
then the product of the elements $T^\nu$ and $T^{\nu'}$ in the right hand side of the equation
\refeq{eq:fc ne} is zero if there is no 
facet of $\Gamma$ containing both $\nu$ and $\nu'$ and 
the one in the Ehrhart ring of $C$ if there is a facet $C$ containing both $\nu$ and $\nu'$.

A \sr\ ring is a ring of this kind over a simplicial complex whose facets have
normalized volume 1.
On account of this fact, we propose the following.

\begin{prob}\rm
Establish a theory of polytopal complex version of \sr\ rings.
\end{prob}
Ishida \cite{ish} studied \cm\ and \gor\ properties of this kind of rings defined
by subcomplexes of boundary complex of convex polytopes.


\section{Complexity of a graded ring and Frobenius complexity}

\mylabel{sec:comp}

From now on, we use the term ring to express a not necessarily 
commutative ring with identity.

First we define the complexity of a graded ring.

\begin{definition}\rm
\mylabel{def:comp}
Let $A=\bigoplus_{n\geq 0} A_n$ be an $\NNN$-graded ring.
For $e\geq 0$, we denote by $G_e(A)$ the subring of $A$ generated by homogeneous elements
with degree at most $e$ over $A_0$.
For $e\geq 1$,
we 
denote by $c_e(A)$ the minimal number of elements which generate
$A_e/G_{e-1}(A)_e$ as a two sided $A_0$-module.
If $c_e(A)$ is finite for any $e$, we say that $A$ is  \dwfg.
Suppose that $A$ is \dwfg.
We define the complexity $\cx(A)$ of $A$ by
$$
\cx(A)\define\inf\{n\in\RRR_{>0}\mid c_e(A)=O(n^e)\ (e\to\infty)\}
$$
if $\{n\in\RRR_{>0}\mid c_e(A)=O(n^e)\ (e\to\infty)\}\neq\emptyset$.
We define $\cx(A)\define\infty$ if 
$\{n\in\RRR_{>0}\mid c_e(A)=O(n^e)\ (e\to\infty)\}=\emptyset$.
\end{definition}
$O$ in the above definition is the Landau symbol,
i.e.,
$g(x)=O(f(x))$ $(x\to\infty)$ means that there is a positive real number $K$
such that $|g(x)|<K|f(x)|$ for $x\gg0$.
We denote $g(x)\neq O(f(x))$ $(x\to \infty)$ if
$g(x)= O(f(x))$ $(x\to \infty)$ does not hold.
We state over which variable the limit is taken when
using Landau symbol, except the case that there is completely no
fear of confusion,
in order to avoid confusion.

Enescu-Yao \cite[Definition 2.9]{ey1} defined the notion of left $R$-skew
algebra.
We refine their definition and define the notion of strong left $R$-skew algebra.

\begin{definition}\rm
Let $R$ be a commutative ring and $A=\bigoplus_{n\geq0}A_n$  a graded ring.
Suppose that a ring homomorphism $R\to A_0$ is fixed.
We say that $A$ is a strong left $R$-skew algebra if $aI\subset Ia$ for any
homogeneous element $a\in A$ and any ideal $I\subset R$.
\end{definition}

\begin{remark}\rm
Let $R$ be a commutative ring,
$I$ an ideal of $R$ and $A=\bigoplus_{n\geq0}A_n$ a strong left $R$-skew algebra.
Then $IA=\bigoplus_{n\geq 0}IA_n$ is a two sided ideal of $A$ and
$A/IA=\bigoplus_{n\geq0}A_n/IA_n$ has naturally a graded ring structure.
\end{remark}

\begin{remark}\rm
\mylabel{rem:mod cx}
Suppose that $A_0$ is commutative and $A=\bigoplus_{n\geq0}A_n$ is
a \dwfg\ strong left $A_0$-skew algebra.
Then $c_e(A)$ is equal to the minimal number of generators of $A_e/G_{e-1}(A)_e$
as a left $A_0$-module.
Moreover, if $A_0$ is a local ring with maximal ideal $\mmmm$, then
$A/\mmmm A=\bigoplus_{n\geq0}A_n/\mmmm A_n$ and $c_e(A)=c_e(A/\mmmm A)$.
In particular, $c_e(A)$ is equal to the dimension of
$A_e/(G_{e-1}(A)_e+\mmmm A_e)=(A/\mmmm A)_e/G_{e-1}(A/\mmmm A)_e$
as a vector space over $A_0/\mmmm$.
\end{remark}

Next we recall the definition of Frobenius complexity.
Let $R$ be a commutative ring with prime characteristic $p$ and $M$ 
an $R$-module.
We denote by $\prehat{e}M$ the $R$-module whose additive group structure
is that of $M$ and the action of $R$ is defined by $e$ times iterated
Frobenius map, i.e. $r\cdot m=r^{p^e}m$ for $r\in R$ and $m\in M$,
where the $R$-action of right hand side is the original $R$-module
structure of $M$.
Note that for $\varphi\in\hom_R(M,\prehat{e}M)$ and $\psi\in\hom(M,\prehat{e'}M)$,
$\psi\circ\varphi\in\hom_R(M,\prehat{e+e'}M)$.
On account of this fact, we state the following.

\begin{definition}\rm
\mylabel{def:frob op}
Let $R$ and $M$ as above.
We set $\FFFFF^e(M)\define\hom(M,\prehat{e}M)$
and
$$
\FFFFF(M)\define\bigoplus_{e\geq0}\FFFFF^e(M).
$$
We call $\FFFFF(M)$ the ring of Frobenius operators on $M$.
The multiplication on $\FFFFF(M)$ is defined by composition of maps.
\end{definition}

\begin{definition}[{\cite[Definition 2.13]{ey1}}]
\rm
\mylabel{def:cxf}
Let $(R,\mmmm,k)$ be a commutative Noetherian local ring and $E$ the injective hull of $k$.
Then the Frobenius complexity $\cxf(R)$ of $R$ is defined by
$$
\cxf(R)\define\log_p(\cx(\FFFFF(E)))
$$
if $\FFFFF(E)$ is not finitely generated over $\FFFFF^0(E)$.
If $\FFFFF(E)$ is finitely generated over $\FFFFF^0(E)$,
we define $\cxf(R)\define-\infty$.
For an $\NNN$-graded commutative Noetherian ring $R=\bigoplus_{n\geq0}R_n$ with $R_0$
a field, we define the Frobenius complexity of $R$ to be that of 
$\mmmm$-adic completion of $R$, where $\mmmm=\bigoplus_{n>0}R_n$.
\end{definition}

\begin{remark}[{cf.\ \cite[3.3. Proposition.]{ls}}]
\rm
Let $(R,\mmmm,k)$ be as above and $\hat R$ the $\mmmm$-adic completion of $R$.
Then $(\hat R, \mmmm\hat R,k)$ is a local ring,
$E_R(k)=E_{\hat R}(k)$ and
$\hom_R(E_R(k),\prehat{e}E_R(k))
=\hom_{\hat R}(E_{\hat R}(k),\prehat{e}E_{\hat R}(k))$.
Thus, we see that Frobenius complexity does not vary by taking completion.
\end{remark}


\section{T-construction and T-complexity}

\mylabel{sec:t const}

Katzman et al. introduced an important graded ring construction method
from a commutative graded ring with prime characteristic.
We first recall their definition.

\begin{definition}[{\cite[Definition 2.1]{kssz}}]
\rm
\mylabel{def:tc}
Let $R=\bigoplus_{n\geq0}R_n$ be an $\NNN$-graded commutative ring with 
characteristic $p$.
We set $T(R)_e\define R_{p^e-1}$ for $e\geq 0$ and
$$
T(R)\define\bigoplus_{e\geq0}T(R)_e.
$$
The multiplication in $T(R)$ is defined by
$a\ast b\define ab^{p^e}$ for $a\in T(R)_e$ and $b\in T(R)_{e'}$
(the right hand side is the original product in $R$).
\end{definition}
%
%
Note that since $R$ is a commutative ring with characteristic $p$,
the product $\ast$ satisfies distributive law.
Thus, $T(R)$ is an $\NNN$-graded ring.

Next we make the following.

\begin{definition}
\rm
\mylabel{def:tcx}
In the situation of Definition \ref{def:tc}, we set
$$
\tcx(R)\define\log_p\cx(T(R))
$$
if $T(R)$ is not finitely generated over $T(R)_0$ and 
$\tcx(R)\define-\infty$ if $T(R)$ is finitely generated over $T(R)_0$
and call $\tcx(R)$ the T-complexity of $R$.
\end{definition}
Next we recall the following.

\begin{definition}[{\cite[Definition 3.2]{kssz}}]
\rm
Let $R$ be a commutative Noetherian normal ring that is either complete local or 
$\NNN$-graded and finitely generated over a field $R_0$.
Let $\omega$ denote the canonical ideal of $R$ and for $m\in\ZZZ$, let $\omega^{(m)}$
be the $m$-th power of $\omega$ in $\Div(R)$.
Then 
$$
\RRRRR\define\bigoplus_{n\geq0}\omega^{(-n)}
$$
is called the anticanonical cover of $R$.
\end{definition}
Note that $\RRRRR$ is a commutative graded ring with $\RRRRR_0=R$.
In particular, $\RRRRR$ and $R$ have the same characteristic.

Now we recall a crucially important result of Katzman et al.

\begin{fact}[{\cite[Theorem 3.3]{kssz}}]
\mylabel{fact:kssz}
Let $(R,\mmmm)$ be a commutative \cm\ normal complete local ring of characteristic $p$,
$E$ the injective hull of $R/\mmmm$, $\RRRRR$ the anticanonical cover of $R$.
Then there is an isomorphism of graded rings
$$
\FFFFF(E)\cong T(\RRRRR).
$$
\end{fact}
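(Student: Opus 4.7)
The plan is to build the isomorphism $\FFFFF(E)\cong T(\RRRRR)$ degree by degree by establishing, for each $e\geq 0$, an isomorphism
$$
\FFFFF^e(E) \;=\; \hom_R(E,\prehat{e}E) \;\cong\; \omega^{(1-p^e)} \;=\; T(\RRRRR)_e,
$$
and then verifying that these identifications are compatible with composition on the left and the twisted product $a\ast b = ab^{p^e}$ on the right. Because $R$ is complete local, Matlis duality $(-)^\vee\define\hom_R(-,E)$ is the natural tool to move between the $E$-side and the $R$-side.

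The first step is to translate from $E$ to $R$. The key observation is that the Frobenius push-forward commutes with Matlis duality, i.e.\ $\prehat{e}E\cong (\prehat{e}R)^\vee$; this is a direct check using $R^\vee=E$ and the fact that $\prehat{e}(-)$ is an exact additive functor which matches up on the compact generator $R$. Granting this, tensor-hom adjunction together with $\hom_R(E,E)=R$ gives
$$
\FFFFF^e(E) \;=\; \hom_R(E,(\prehat{e}R)^\vee) \;\cong\; \hom_R(\prehat{e}R,\hom_R(E,E)) \;=\; \hom_R(\prehat{e}R,R).
$$
The second step is to identify $\hom_R(\prehat{e}R,R)$ with $\omega^{(1-p^e)}$. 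Since $R$ is normal, $\prehat{e}R$ is a reflexive $R$-module, and so its dual is determined by its restriction to the regular locus $U\subset\mathrm{Spec}(R)$. On $U$ the relative dualizing module of the $e$-fold Frobenius is the line bundle $\omega_U^{\,1-p^e}$, so Grothendieck duality for the finite flat map $F^e\colon\OOOOO_U\to \prehat{e}\OOOOO_U$ identifies $\hom_{\OOOOO_U}(\prehat{e}\OOOOO_U,\OOOOO_U)$ with $\prehat{e}\omega_U^{\,1-p^e}$. Extending back to $\mathrm{Spec}(R)$ by reflexivity, and recalling that divisorial ideals are exactly the $R$-modules of sections on $U$ of divisorial sheaves, produces the desired isomorphism $\hom_R(\prehat{e}R,R)\cong\omega^{(1-p^e)}$.

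The third step, and the main obstacle, is to match the two ring structures. Given $\varphi\in\FFFFF^e(E)$ and $\psi\in\FFFFF^{e'}(E)$ corresponding under the identifications of the previous two steps to $a\in\omega^{(1-p^e)}$ and $b\in\omega^{(1-p^{e'})}$, forming the composition $\varphi\circ\psi\in\FFFFF^{e+e'}(E)$ requires first reinterpreting $\varphi\colon E\to\prehat{e}E$ as a map $\prehat{e'}E\to\prehat{e+e'}E$, and this reinterpretation, traced through Grothendieck duality, has the effect of raising the coefficient of $\varphi$ to its $p^{e'}$-th power. After this bookkeeping one sees that $\varphi\circ\psi$ corresponds to $b\cdot a^{p^{e'}}\in\omega^{(1-p^{e+e'})}$, which (up to the convention fixed in Definition \ref{def:tc}) is exactly the product $\ast$ on $T(\RRRRR)$. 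The hard part is tracking these Frobenius twists consistently through the chain Matlis $\leadsto$ tensor-hom $\leadsto$ Grothendieck duality so that the exponent $1-p^e$ appears with the correct sign and the precomposition-by-Frobenius really manifests as the $p^e$-th power on the anticanonical side; once this is done, additivity of the constructions on both sides gives the isomorphism of graded rings.
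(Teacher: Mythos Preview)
The paper does not give a proof of this statement: it is recorded as a \emph{Fact}, with the proof outsourced entirely to \cite[Theorem~3.3]{kssz}. So there is no ``paper's own proof'' to compare your proposal against.

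That said, your outline is essentially the strategy of \cite{kssz}: pass from $E$ to $R$ via Matlis duality to get $\FFFFF^e(E)\cong\hom_R(\prehat{e}R,R)$, then identify the latter with (the Frobenius pushforward of) $\omega^{(1-p^e)}$ by Grothendieck duality on the regular locus plus reflexive extension, and finally check that composition matches the twisted product. Two points deserve care if you want to turn this into a complete argument. First, your step ``$\prehat{e}R$ is reflexive because $R$ is normal'' implicitly uses that $\prehat{e}R$ is a finitely generated $R$-module, i.e.\ that $R$ is $F$-finite; the hypotheses of the Fact do not include this, and \cite{kssz} handles the general complete local case with some additional work. Second, in your composition check you obtain $b\cdot a^{p^{e'}}$ for $\varphi\circ\psi$, whereas the product in $T(\RRRRR)$ is $a\ast b=ab^{p^e}$; depending on conventions for which side of the bimodule $\prehat{e}R$ one uses in each step, one may land on an isomorphism or an anti-isomorphism, and this bookkeeping (which you flag as ``the hard part'') has to be done consistently to get the direction right.
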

Note in the setting of Fact \ref{fact:kssz}, $T(\RRRRR)_0=R$ and
$T(\RRRRR)$ is a strong left $R$-skew algebra.

\begin{remark}\rm
\mylabel{rem:completion}
If $R$ is a normal excellent ring (e.g. a finitely generated commutative ring
over a field) and $\mmmm$ a maximal ideal of $R$, then the $\mmmm$-adic completion
$\hat R$ of $R$ is a complete normal local ring with maximal ideal $\mmmm \hat R$.
See e.g. \cite[(33.I) Theorem 79]{mat}.
In particular, if $R=\bigoplus_{n\geq0}R_n$ is a commutative normal graded ring
which is finitely generated over a field $R_0$, then the $\mmmm$-adic completion of $R$
is a normal complete local ring, where $\mmmm=\bigoplus_{n>0}R_n$.

Further, if $(R,\mmmm)$ is an excellent \cm\ normal local ring with canonical module or
finitely generated $\NNN$-graded \cm\ normal 
ring over a field $R_0$ and $\mmmm=\bigoplus_{n>0}R_n$,
then
$$
\omega_R^{(-n)}/\mmmm\omega_R^{(-n)}
=
\omega_{\hat R}^{(-n)}/\mmmm\omega_{\hat R}^{(-n)}
$$
for any $n\geq 0$, where $\hat R$ is the $\mmmm$-adic completion of $R$.
\end{remark}

In view of Remark \ref{rem:completion},
Fact \ref{fact:kssz}, Remark \ref{rem:mod cx} and 
equation \refeq{eq:fc ne},
we consider the T-complexity of Ehrhart rings.
First we note the following fact (cf. \cite[Proposition 2.6]{pag}).

\begin{lemma}
\mylabel{lem:u bound}
Let $R=\bigoplus_{n\geq0}R_n$ be a commutative Noetherian $\NNN$-graded ring
with $R_0$ a field of characteristic $p$.
Then
$$
\tcx(R)\leq\dim R-1.
$$
\end{lemma}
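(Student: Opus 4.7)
The plan is to reduce the question to a Hilbert function estimate. Since $T(R)_0 = R_{p^0 - 1} = R_0$ is a field, the complexity $c_e(T(R))$, which counts generators of $T(R)_e / G_{e-1}(T(R))_e$ as a two-sided $T(R)_0$-module, is at most the $R_0$-vector space dimension of $T(R)_e = R_{p^e-1}$. So the central inequality I would establish first is
\begin{equation*}
c_e(T(R)) \;\leq\; \dim_{R_0} R_{p^e - 1}.
\end{equation*}

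Next, I would invoke the Hilbert--Serre theorem for the Noetherian $\NNN$-graded ring $R$ with $R_0$ a field: the Hilbert series $\sum_n (\dim_{R_0} R_n)\,t^n$ is a rational function whose order of pole at $t=1$ is the Krull dimension $\dim R$. Consequently, there is a constant $K>0$ such that $\dim_{R_0} R_n \leq K n^{\dim R - 1}$ for all sufficiently large $n$. Substituting $n = p^e - 1$ gives
\begin{equation*}
c_e(T(R)) \;\leq\; K\,(p^e - 1)^{\dim R - 1} \;=\; O\!\bigl((p^{\dim R - 1})^e\bigr) \quad (e\to\infty).
\end{equation*}

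By the definition of $\cx$, this yields $\cx(T(R)) \leq p^{\dim R - 1}$. If $T(R)$ is finitely generated over $T(R)_0$, then $\tcx(R) = -\infty$ and the desired inequality is trivial; otherwise, taking $\log_p$ gives $\tcx(R) = \log_p \cx(T(R)) \leq \dim R - 1$, which is what is claimed.

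There is no real obstacle here. The only point that requires a sentence of care is that $R$ need not be standard graded; if $R$ is generated in degrees $d_1,\ldots,d_s$, the Hilbert function is eventually quasi-polynomial, but its asymptotic growth order is still $n^{\dim R - 1}$, which is all that is needed for the Landau-symbol bound. The passage from counting generators to a vector-space dimension bound is trivial because the degree-$0$ coefficient ring is a field, so in particular there is no obstruction coming from the noncommutativity of $T(R)$.
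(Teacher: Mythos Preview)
Your proof is correct and follows essentially the same route as the paper: bound $c_e(T(R))$ by $\dim_{R_0}R_{p^e-1}$, then use the Hilbert function estimate $\dim_{R_0}R_n = O(n^{\dim R-1})$ to conclude $c_e(T(R))=O(p^{(\dim R-1)e})$ as $e\to\infty$. Your treatment is in fact slightly more careful than the paper's, which does not separately mention the finitely generated case or the quasi-polynomial subtlety in the non-standard graded situation.
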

\begin{proof}
Set $d=\dim R$.
Since $R$ is Noetherian, there is a polynomial $f(n)$ of $n$ with degree $d-1$
such that
$$
\dim_{R_0}R_n\leq f(n)\quad\mbox{for $n\gg0$.}
$$
Therefore, $c_e(T(R))\leq\dim_{R_0}R_{p^e-1}\leq f(p^e-1)=O(p^{(d-1)e})$
$(e\to \infty)$.
Thus, $\tcx(R)\leq d-1$.
\end{proof}

Now we state the following.

\begin{lemma}
\mylabel{lem:msj5}
Let $d$ be an integer with $d\geq 2$ and
let $\Delta$ be an integral convex polytope in $\RRR^d$ such that
$\dim\Delta=d$ and 
$\Delta\subset\{(x_1,\ldots, x_d)\in \RRR^d\mid
x_i\geq0$ $(1\leq i\leq d)$, $\sum_{i=1}^d x_i\leq d-1\}$, and
$R$ the Ehrhart ring defined by $\Delta$ with base field $\KKK$
of characteristic $p$.
Then $\lim_{p\to\infty}\tcx(R)=d$
\end{lemma}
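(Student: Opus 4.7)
The upper bound $\tcx(R)\le d$ is immediate from Lemma~\ref{lem:u bound}, since $\dim R=\dim\Delta+1=d+1$. For the lower bound, I will exhibit a constant $c_\Delta>0$ (depending only on $\Delta$) such that $c_e(T(R))\ge c_\Delta\,p^{de}/(d!)^{e-1}$ for every $e\ge 1$ once $p$ is sufficiently large. This forces $c_e(T(R))/p^{(d-\epsilon)e}\to\infty$ as $e\to\infty$ whenever $p^\epsilon>d!$, so $\tcx(R)\ge d-\epsilon$ for $p\gg 0$, and letting $\epsilon\to 0$ yields $\lim_{p\to\infty}\tcx(R)=d$.

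First I reduce $c_e(T(R))$ to a lattice-point count. Writing $L_n:=n\Delta\cap\ZZZ^d$, the component $T(R)_e$ has $\KKK$-basis indexed by $L_{p^e-1}$, and by associativity of the $*$-product the degree-$e$ part of $G_{e-1}(T(R))$ is spanned by monomials $X^{\mu_1+p^f\mu_2}$ with $1\le f\le e-1$, $\mu_1\in L_{p^f-1}$, $\mu_2\in L_{p^{e-f}-1}$. Hence $c_e(T(R))$ counts those $\nu\in L_{p^e-1}$ admitting no such decomposition. The key observation is that the hypothesis $\Delta\subset\{\sum_i x_i\le d-1\}$ forces every $\mu_1\in L_{p^f-1}$ to have coordinates in $[0,(p^f-1)(d-1)]$; combined with $\mu_1\equiv\nu\pmod{p^f}$, this parametrizes the possible $\mu_1$'s as $(\nu\bmod p^f)+p^f\vec k$ for some $\vec k\in\{0,\ldots,d-2\}^d$. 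Setting $\alpha^{(f)}(\nu):=(\nu\bmod p^f)/p^f\in[0,1)^d$ and $B_{\mathrm{bad}}:=[0,1)^d\setminus\bigcup_{\vec k\in\{0,\ldots,d-2\}^d}((\Delta-\vec k)\cap[0,1)^d)$, the containment $\Delta-\vec k\subset\{\sum x_i\le d-1-\sum k_i\}$ shows that $B_{\mathrm{bad}}$ contains the corner simplex $\{x\in[0,1)^d:\sum x_i>d-1\}$, which has Lebesgue measure $1/d!$.

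Now fix an interior point $y_0\in\Delta$ together with an $\ell^\infty$-box $B(y_0,\rho)\subset\Delta$ (possible since $\dim\Delta=d$). For $\nu$ a lattice point in $(p^e-1)B(y_0,\rho)$ and $p$ large, the auxiliary constraint $\mu_2\in L_{p^{e-f}-1}$ is automatic: indeed $\mu_2/(p^{e-f}-1)\approx y_0\in\mathrm{int}(\Delta)$ with error $O(d/p^{e-f})$, so $\mu_2\in(p^{e-f}-1)\Delta$. Consequently $\nu$ is non-decomposable iff $\alpha^{(f)}(\nu)\in B_{\mathrm{bad}}$ for every $f=1,\ldots,e-1$. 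Writing $\nu=\sum_{j\ge 0}a_j\,p^j$ in base $p$ with digits $a_j\in\{0,\ldots,p-1\}^d$, the recursion $\alpha^{(f+1)}=\alpha^{(f)}/p+a_f/p$ together with equidistribution of $a_f$ (a Fubini/sieve argument over the digit sequence) shows that at least a fraction $(|B_{\mathrm{bad}}|-o(1))^{e-1}\ge(1/d!-o(1))^{e-1}$ of the $\Theta((\rho p^e)^d)$ lattice points in $(p^e-1)B(y_0,\rho)$ satisfy the digit conditions at every level. This yields $c_e(T(R))\ge c_\Delta\,p^{de}/(d!)^{e-1}$ as claimed. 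The main obstacles are (i) making the approximation $\mu_1/(p^f-1)\approx\alpha^{(f)}+\vec k$ sharp enough to convert the geometric condition $\mu_1\in(p^f-1)\Delta$ into the membership condition $\alpha^{(f)}+\vec k\in\Delta$ uniformly in $f$, and (ii) establishing the near-independence of the events $\{\alpha^{(f)}\in B_{\mathrm{bad}}\}$ as $f$ varies; both are controlled by the error terms in Ehrhart-type lattice-point enumeration and contribute only $o(p^{de})$ corrections that do not affect the asymptotic bound.
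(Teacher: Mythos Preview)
Your outline has the right architecture---reduce $c_e(T(R))$ to a lattice-point count, use base-$p$ expansions, exploit the constraint $\sum x_i\le d-1$---but the two ``main obstacles'' you name in the last paragraph are the entire substance of the lower bound, and you have not actually resolved either one. For (i): what you must rule out is $\mu_1/(p^f-1)\in\Delta$, whereas $B_{\mathrm{bad}}$ encodes $\mu_1/p^f=\alpha^{(f)}+\vec k\notin\Delta$; the scaling factor $p^f/(p^f-1)$ (worst at $f=1$) means $\alpha^{(f)}\in B_{\mathrm{bad}}$ does not by itself preclude a valid $\mu_1$. For (ii): the events $\{\alpha^{(f)}\in B_{\mathrm{bad}}\}$ are coupled through the recursion $\alpha^{(f+1)}=\alpha^{(f)}/p+a_f/p$, and ``equidistribution'' plus ``Fubini/sieve'' is a heuristic, not an argument; you would need a uniform lower bound on the number of admissible digits $a_f$ independent of $\alpha^{(f)}$, and you have not supplied one. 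The closing sentence that these issues ``contribute only $o(p^{de})$ corrections'' is an assertion, not a proof. (Incidentally, the ``$\mu_2$ automatic'' claim is unnecessary for a lower bound: if no $\mu_1$ works at any level then $\nu$ is certainly non-decomposable, regardless of $\mu_2$.)

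The paper sidesteps both obstacles by not attempting to characterise non-decomposability. It gives instead an explicit sufficient condition on the digits: if every base-$p$ digit of every coordinate of $\nu$ at positions $0,\dots,e-2$ lies in $\{p-\lfloor p/d\rfloor,\dots,p-1\}$, then for each $1\le e'<e$ the residue $y'=\nu\bmod p^{e'}$ has each coordinate $\ge p^{e'-1}(p-\lfloor p/d\rfloor)\ge p^{e'}-\lfloor p^{e'}/d\rfloor$, hence $\sum_i y'_i\ge(d-1)p^{e'}>(d-1)(p^{e'}-1)$, and the same inequality persists after adding any $p^{e'}\vec k$ with $\vec k\ge 0$; thus no $\mu_1\equiv\nu\pmod{p^{e'}}$ lies in $(p^{e'}-1)\Delta$. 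The digit condition decouples the levels completely (killing obstacle (ii)) and is stated directly for $(p^{e'}-1)\Delta$ rather than for $\Delta$ (killing obstacle (i)). Placing the top digit in a small integer box inside $\Delta$ then gives exactly $(\lfloor p/d\rfloor)^{d(e-1)}$ non-decomposable points, whence $\cx(T(R))\ge(\lfloor p/d\rfloor)^d$ and $\liminf_{p\to\infty}\tcx(R)\ge d$. Your framework in fact contains this argument: restrict from $B_{\mathrm{bad}}$ to the corner simplex $\{\sum x_i>d-1\}$, where the scaling by $p^f/(p^f-1)$ only moves points further from $\Delta$, and note that $a_f\in\{p-\lfloor p/d\rfloor,\dots,p-1\}^d$ forces $\alpha^{(f+1)}$ into that simplex regardless of $\alpha^{(f)}$.
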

\begin{proof}
By Lemma \ref{lem:u bound}, we see that $\tcx(R)\leq d$ for any $p$.

Now we prove that $\liminf_{p\to\infty}\tcx(R)\geq d$.
Let $e'$ be a positive integer and $x=(x_1, \ldots, x_d)\in\NNN^d$.
We set $y'_i$ the remainder when $x_i$ is divided by $p^{e'}$ for $1\leq i\leq d$
and $y'=(y'_1,\ldots, y'_d)$.
If $y'_i\geq p^{e'}-\lfloor p^{e'}/d\rfloor$ for any $i$, then
$y'\not\in (p^{e'}-1)\Delta$ since 
$\sum_{i=1}^d y'_i\geq (d-1)p^{e'}$
and $(p^{e'}-1)\Delta\subset\{(w_1,\ldots,w_d)\in\RRR^d\mid\sum_{i=1}^d w_i\leq(d-1)(p^{e'}-1)\}$.
Thus, there are no $y=(y_1,\ldots, y_d)\in(p^{e'}-1)\Delta\cap\ZZZ^d$ and $z\in\ZZZ^d$ such that
$$
x=y+p^{e'}z,
$$
since $y_i\equiv y'_i\pmod{p^{e'}}$ for $1\leq i\leq d$.
Therefore for $e\geq 2$ and $x=(x_1, \ldots, x_d)\in (p^e-1)\Delta\cap\ZZZ^d$,
if each digit of the position $1$, $p$, $p^2$, \ldots, $p^{e-2}$ in
base $p$ expansion
of $x_i$ is greater than or equals to $p-\lfloor p/d\rfloor$, then
there are no $e'$, $x'$ and $x''$ such that 
$0<e'<e$,
$x=x'+p^{e'}x''$,
$x'\in(p^{e'}-1)\Delta\cap\ZZZ^d$ and
$x''\in(p^{e-e'}-1)\Delta\cap\ZZZ^d$.
In fact, 
since the digit of $x_i$ of the position $p^{e'-1}$ is greater than or equals to
$p-\lfloor p/d\rfloor$, we see that 
the remainder when $x_i$ is divided by $p^{e'}$ is
greater than or equals to
$p^{e'-1}(p-\lfloor p/d\rfloor)
\geq p^{e'}-\lfloor p^{e'}/d\rfloor$ for $1\leq i\leq d$.
This contradicts the fact proved above.

Since $\Delta$ has an interior point, there is $N>d$ 
such that if $p>N$, then there are
positive integers
$a_1$, \ldots, $a_d$ such that
$[a_1/p,(a_1+2)/p]\times\cdots\times[a_d/p,(a_d+2)/p]\subset\Delta$.

Let $e$ be an integer with $e\geq 2$.
Since $a_ip^{e-1}>(p^e-1)a_i/p$ and
$(a_i+1)p^{e-1}-1<(p^e-1)(a_i+2)/p$,
we see that
$[a_1p^{e-1},(a_1+1)p^{e-1}-1]\times\cdots\times[a_dp^{e-1},(a_d+1)p^{e-1}-1]\subset(p^e-1)\Delta$.
If $x=(x_1,\ldots,x_d)\in
[a_1p^{e-1},(a_1+1)p^{e-1}-1]\times\cdots\times[a_dp^{e-1},(a_d+1)p^{e-1}-1]$
and each digit of the position $1$, $p$, $p^2$, \ldots, $p^{e-2}$ of base $p$ expansion
of
$x_i$ is greater than or equals to $p-\lfloor p/d\rfloor$ for any $i$,
then $x\in(p^e-1)\Delta$ and there are no $e'$, $x'$ and $x''$ such that
$0<e'<e$,
$x=x'+p^{e'}x''$,
$x'\in(p^{e'}-1)\Delta\cap\ZZZ^d$ and
$x''\in(p^{e-e'}-1)\Delta\cap\ZZZ^d$.

Since there are $(\lfloor p/d\rfloor)^{d(e-1)}$ choices of $x$,
we see that $c_e(T(R))\geq(\lfloor p/d\rfloor)^{d(e-1)}$ if $p>N$.
Thus $\cx(T(R))\geq(\lfloor p/d\rfloor)^d$ and
$
\liminf_{p\to\infty}\tcx(R)=\liminf_{p\to\infty}\log_p \cx(T(R))\geq d
$.
\end{proof}

Next we state a lemma which is crucial to apply Lemma \ref{lem:msj5} to
more general polytope.
We first state the definition of symbols.

\begin{definition}\rm
Let $\Delta$ be a convex polytope in $\RRR^d$ with $\dim\Delta=d$ and 
$\delta$ a positive real number.
We denote by $\partial\Delta$ the boundary of $\Delta$.
We set
$\partial'_{\delta}(\Delta)\define\{P\in\Delta\mid$
the distance between $P$ and $\partial\Delta$ is less than $\delta\}$,
$\partial''_{\delta}(\Delta)\define\{P\in\Delta\mid$
the distance between $P$ and $\partial\Delta$ is equal to $\delta\}$
and
$\interior'_{\delta}(\Delta)\define\{P\in\Delta\mid$
the distance between $P$ and $\partial\Delta$ is greater than $\delta\}$.
\end{definition}
Now we state the following.

\begin{lemma}
\mylabel{lem:msj6}
Let $\Delta'$ be a $d+1$ dimensional integral convex polytope in $\RRR^{d+1}$.
Set $\Delta=\{(x_1,\ldots,x_d)\in\RRR^d\mid\exists y\in\RRR; (x_1,\ldots, x_d, y)\in\Delta'\}$
and let $R$ (resp.\ $R'$) be the Ehrhart ring of $\Delta$ (resp.\ $\Delta'$)
in a Laurent polynomial ring $\KKK[X_1^{\pm1},\ldots, X_d^{\pm1},T]$
(resp.\ $\KKK[X_1^{\pm1},\ldots, X_{d+1}^{\pm1},T]$),
where $\KKK$ is a field of characteristic $p$.
Then 
$\lim_{p\to\infty}\tcx(R')=d+1$
if $\lim_{p\to\infty}\tcx(R)=d$.
\end{lemma}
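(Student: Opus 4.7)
Plan. The upper bound $\tcx(R')\le d+1$ is immediate from Lemma \ref{lem:u bound}, since $\dim R'=\dim\Delta'+1=d+2$. The whole content lies in the lower bound $\liminf_{p\to\infty}\tcx(R')\ge d+1$, and my strategy is to bootstrap non-decomposable generators of $T(R)_e$ up to non-decomposable generators of $T(R')_e$ through the projection $\pi\colon\RRR^{d+1}\to\RRR^d$, paying careful attention to the size of fibers.

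The key projection principle I would use is the following. Suppose $x\in(p^e-1)\Delta\cap\ZZZ^d$ is non-decomposable as a generator of $T(R)_e$, i.e., cannot be written $x=x_1+p^{e'}x_2$ with $0<e'<e$, $x_1\in(p^{e'}-1)\Delta\cap\ZZZ^d$ and $x_2\in(p^{e-e'}-1)\Delta\cap\ZZZ^d$. Then for any $y\in\ZZZ$ with $(x,y)\in(p^e-1)\Delta'\cap\ZZZ^{d+1}$, the point $(x,y)$ is non-decomposable in $T(R')_e$, because projecting any hypothetical decomposition $(x,y)=(x_1,y_1)+p^{e'}(x_2,y_2)$ to the first $d$ coordinates would produce a forbidden decomposition of $x$. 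Consequently
\[
c_e(T(R'))\ge\sum_{x}\bigl|\{y\in\ZZZ:(x,y)\in(p^e-1)\Delta'\}\bigr|,
\]
where the sum is over non-decomposable $x$ in $T(R)_e$.

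To convert this into the needed factor of $p^e$, I would fix a compact set $K\subset\interior(\Delta)$. Since $\dim\Delta'=d+1$, the fiber length function $h(t)=\max\{y:(t,y)\in\Delta'\}-\min\{y:(t,y)\in\Delta'\}$ is PL, concave, and strictly positive on $\interior(\Delta)$, so $h\ge c(K)>0$ on $K$. Thus for every $x$ with $x/(p^e-1)\in K$ the fiber contains at least $c(K)(p^e-1)-1$ integers, and each such non-decomposable $x$ contributes $\Omega(p^e)$ non-decomposable elements to $T(R')_e$. If $N_e^K$ denotes the number of non-decomposable $x\in T(R)_e$ with $x/(p^e-1)\in K$, then $c_e(T(R'))\ge c(K)N_e^K(p^e-1)/2$ and the conclusion $\liminf_{p\to\infty}\tcx(R')\ge d+1$ follows once one shows, for any $\epsilon>0$ and $p$ large, that $\limsup_e(N_e^K)^{1/e}\ge p^{d-\epsilon}$.

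The main obstacle, and where the proof needs the most care, is this last localisation step: translating the global hypothesis $\lim_{p\to\infty}\tcx(R)=d$ into a growth statement restricted to a chosen interior compact set. The plan is to choose a finite polytopal partition $\Delta=K\sqcup A$ with $\mathrm{vol}(A)$ arbitrarily small, decompose the count as $c_e(T(R))=N_e^K+N_e^A$, and bound $N_e^A\le|(p^e-1)A\cap\ZZZ^d|$ by the Ehrhart estimate $\sim\mathrm{vol}(A)p^{de}$. Because $\tcx(R)$ attains its maximal value $d=\dim R-1$ in the limit, passing to a subsequence of $e$'s realising $c_e(T(R))^{1/e}\ge p^{d-\epsilon/2}$ and choosing $\mathrm{vol}(A)$ small relative to $\epsilon$ and $p$, a pigeonhole argument forces the $K$-piece to carry the dominant growth, yielding $(N_e^K)^{1/e}\ge p^{d-\epsilon}$ on that subsequence. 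Substituting into the fiber bound gives $\limsup_ec_e(T(R'))^{1/e}\ge p^{d-\epsilon}\cdot p=p^{d+1-\epsilon}$, i.e.\ $\tcx(R')\ge d+1-\epsilon$, and letting $\epsilon\to0$ finishes the proof.
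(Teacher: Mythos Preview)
Your overall strategy---the projection principle and the fiber-counting bound $c_e(T(R'))\ge\sum_x\#\{y:(x,y)\in(p^e-1)\Delta'\}$ over non-decomposable $x$---is exactly the paper's approach, and the upper bound via Lemma~\ref{lem:u bound} is correct.

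The gap is in your localisation step. With $K\subset\interior(\Delta)$ \emph{fixed}, the annulus $A=\Delta\setminus K$ has positive $d$-dimensional volume, so the Ehrhart estimate gives only $N_e^A\le C(A)\,p^{de}+O(p^{(d-1)e})$. But $C(A)\,p^{de}$ dominates $p^{(d-\epsilon/2)e}$ for all large $e$, no matter how small $C(A)>0$ is chosen. Thus from $c_e(T(R))=N_e^K+N_e^A$ and $c_e(T(R))\ge p^{(d-\epsilon/2)e}$ on a subsequence you cannot conclude that $N_e^K$ is large: nothing rules out $N_e^K=0$ with all non-decomposable points clustering in $A$. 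Making $\mathrm{vol}(A)$ ``small relative to $\epsilon$ and $p$'' does not help, because the comparison is in the exponent of $e$, not in the constant.

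The paper repairs this by letting the boundary strip shrink with $e$: in $(p^e-1)\Delta$ it takes the strip of width $p^{re}$ about the boundary, where $r=1-\epsilon$. Since $\partial\Delta$ is $(d-1)$-dimensional, that strip contains $O(p^{(d-1)e}\cdot p^{re})=O(p^{(d-\epsilon)e})$ lattice points, and now the pigeonhole works: $\#H_e(R)\neq O(p^{(d-\epsilon)e})$ forces $\#(H_e(R)\setminus\text{strip})\neq O(p^{(d-\epsilon)e})$. The price is that over this $e$-dependent interior the fiber length in $(p^e-1)\Delta'$ is only $\gtrsim a\,p^{re}$ rather than $\gtrsim c\,p^{e}$, yielding $c_e(T(R'))\neq O(p^{(d-\epsilon+r)e})=O(p^{(d+1-2\epsilon)e})$ and hence $\tcx(R')\ge d+1-2\epsilon$. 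This $e$-dependent trade-off between strip width and fiber length is the missing idea in your argument.
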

\begin{proof}
First we note that by Lemma \ref{lem:u bound}, $\tcx(R')\leq d+1$ for any $p$.

Next we prove that $\liminf_{p\to\infty}\tcx(R')\geq d+1$.
Let $\epsilon$ be an arbitrary real number with $0<\epsilon<1$.
Since $\lim_{p\to\infty}\tcx(R)=d$, we see that there exists $N$ such that
if $p>N$, then $\tcx(R)>d-\epsilon$.
Let $p$ be such a prime number.
For a positive integer $e$, we set
$H_e(R)=\{P\in(p^e-1)\Delta\cap\ZZZ^d\mid
X^PT^{p^e-1}\not\in G_{e-1}(T(R))\}$,
where $X^P\define X_1^{p_1}\cdots X_d^{p_d}$ for $P=(p_1,\ldots, p_d)$.
We define $H_e(R')$ similarly.
Then
$H_e(R)=\{P\in(p^e-1)\Delta\cap\ZZZ^d\mid$
there are no $e'$, $P_1$ and $P_2$ such that $0<e'<e$, $P_1\in(p^{e'}-1)\Delta\cap\ZZZ^d$
and $P_2\in(p^{e-e'}-1)\Delta\cap\ZZZ^d$ with $P=P_1+p^{e'}P_2\}$.
In particular,
$(H_e(R)\times\ZZZ)\cap(p^{e}-1)\Delta'\subset H_e(R')$.

Set $r=1-\epsilon$.
Then $\#(\partial'_{p^{re}}((p^e-1)\Delta)\cap\ZZZ^d)=O(p^{(r+d-1)e})=O(p^{(d-\epsilon)e})$
$(e\to\infty)$, since $\dim\partial\Delta=d-1$.
On the other hand, since $\tcx(R)>d-\epsilon$, we see that
$\#H_e(R)\neq O(p^{(d-\epsilon)e})$ $(e\to\infty)$.
Therefore,
\begin{equation}
\mylabel{eq:setminus}
\#(H_e(R)\setminus(\partial'_{p^{re}}(p^e-1)\Delta))\neq O(p^{(d-\epsilon)e})\quad
(e\to\infty).
\end{equation}

For $x=(x_1,\ldots, x_d)\in\Delta$, set
$h(x)\define\max\{y-z\mid(x_1,\ldots,x_d,y), (x_1,\ldots,x_d,z)\in\Delta'\}$.
Then, since $\Delta'$ is the intersection of finite number of halfspaces, we see that
there exist  positive real numbers $a$ and $\delta$ such that
if $0<\delta'\leq\delta$ and $x\in\partial''_{\delta'}(\Delta)$, then
$h(x)\geq a\delta'$.
For these $a$ and $\delta$, we see that 
$h(P)\geq a\delta'$ for any $0<\delta'\leq \delta$ and $P\in\interior'_{\delta'}(\Delta)$,
since $\Delta'$ is convex.
Thus, we see by \refeq{eq:setminus} that
\begin{eqnarray*}
c_e(T(R'))&=&\#H_e(R')\\
&\geq&\#((H_e(R)\setminus\partial'_{p^{re}}(p^e-1)\Delta)\times\ZZZ\cap(p^e-1)\Delta')\\
&\geq&\#((H_e(R)\setminus\partial'_{p^{re}}(p^e-1)\Delta))(ap^{re}-1)\\
&\neq&O(p^{(d-\epsilon+r)e})\ (e\to\infty),
\end{eqnarray*}
since $p^{re}/(p^e-1)\leq\delta$ for $e\gg0$.
Therefore, $\tcx(R')\geq d-\epsilon+r=d+1-2\epsilon$.

Since $\epsilon$ is an arbitrary real number with $0<\epsilon<1$, we see that
$\liminf_{p\to\infty}\tcx(R')\geq d+1$.
\end{proof}


\section{T-complexities of fiber cones and limit Frobenius complexity
of Hibi rings}

\mylabel{sec:frob}

In this section, we consider the limit of Frobenius complexities of 
Hibi rings as $p\to \infty$, where $p$ is the characteristic of the base field.
Recall that $H$ is a finite distributive lattice with minimal element $x_0$,
$P$ the set of \joinirred\ elements of $H$, $\rkh$ the Hibi ring over
a field $\KKK$ on $H$  and $\omega$ the canonical module of $\rkh$.
In this section, we assume that $\KKK$ is a field of characteristic $p$.

In view of Fact \ref{fact:kssz}, Remarks \ref{rem:mod cx} and \ref{rem:completion},
we consider the T-complexity of fiber cones of the anticanonical ideal of $\rkh$.
We use the notation of \S\ref{sec:anal spread}.
First we state the following.

\begin{lemma}
\mylabel{lem:ce max}
$$
c_e(T(\bigoplus_{n\geq0}\omega^{(-n)}/\mmmm\omega^{(-n)}))
\geq
c_e(T(R^{(-1)}_{(y_0,x_1,\ldots,y_{t-1},x_t)}))
$$
for any $e>0$ and any $(y_0, x_1, \ldots, y_{t-1}, x_t)\in N^{(-1)}$.
\end{lemma}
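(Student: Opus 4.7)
The plan is to exploit the natural inclusion of graded $\KKK$-algebras $A \define R^{(-1)}_{(y_0,x_1,\ldots,y_{t-1},x_t)} \hookrightarrow B \define \bigoplus_{n\geq 0}\omega^{(-n)}/\mmmm\omega^{(-n)}$ (one of the summands on the right-hand side of \refeq{eq:fc ne}, whose multiplication agrees with that of the fiber cone by the discussion following Theorem \ref{thm:can anal spr}) and to lift it to an injection $T(A)\hookrightarrow T(B)$. The key observation is that both $A$ and $B$ carry a refined $\ZZZ^P$-multigrading inherited from the Laurent monomial grading of $\KKK[T_x^{\pm 1}\mid x\in P]$, preserved by the star product $T^\alpha\ast T^\beta = T^{\alpha+p^e\beta}$ of the T-construction, and each $\ZZZ^P$-multidegree component of $B$ is at most one-dimensional over $\KKK$. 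The plan reduces to verifying
$$
G_{e-1}(T(A))_e = T(A)_e \cap G_{e-1}(T(B))_e;
$$
then the induced $\KKK$-linear map $T(A)_e/G_{e-1}(T(A))_e \to T(B)_e/G_{e-1}(T(B))_e$ is injective, and dimension comparison together with Remark \ref{rem:mod cx} (applied with $A_0 = \KKK$) yields the desired inequality.

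Only the nontrivial inclusion requires work. Writing $y_t\define\infty$, I would take a Laurent monomial $T^\nu$ with $\nu\in(p^e-1)C^{(-1)}_{(y_0,x_1,\ldots,y_{t-1},x_t)}\cap\ZZZ^P$ satisfying $T^\nu\in G_{e-1}(T(B))_e$. Using the $\ZZZ^P$-multigrading I would extract from any expression for $T^\nu$ at least one nonzero star-product decomposition
$$
T^\nu = T^{\alpha_1}\ast T^{\alpha_2}\ast\cdots\ast T^{\alpha_k}, \qquad \nu = \alpha_1 + p^{e_1}\alpha_2 + \cdots + p^{e_1+\cdots+e_{k-1}}\alpha_k,
$$
with each $T^{\alpha_j}$ a basis element of $T(B)_{e_j}$ (so $\alpha_j$ a minimal element of $\TTTTT^{(-(p^{e_j}-1))}(P)$), each $e_j < e$, and every partial sum minimal in the appropriate $\TTTTT^{(-\cdot)}(P)$ (otherwise an intermediate product would vanish in $B$). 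The goal is then to show that in fact each $T^{\alpha_j}\in T(A)_{e_j}$, which forces $T^\nu\in G_{e-1}(T(A))_e$.

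For the latter step I would use the defining equalities of $C^{(-(p^e-1))}_{(y_0,x_1,\ldots,y_{t-1},x_t)}$: for each $0\le i\le t$, the equation $\nu(x_i)-\nu(y_i) = (p^e-1)\qmodist(x_i,y_i)$ expands along the decomposition of $\nu$ into a sum of terms, each of which satisfies the inequality $\alpha_j(x_i)-\alpha_j(y_i)\ge(p^{e_j}-1)\qmodist(x_i,y_i)$ since $\alpha_j\in\TTTTT^{(-(p^{e_j}-1))}(P)$. The identity $\sum_{j=1}^k p^{e_1+\cdots+e_{j-1}}(p^{e_j}-1) = p^e-1$, which is pure telescoping, then forces each of these inequalities to be an equality, and this is exactly the membership condition $\alpha_j\in(p^{e_j}-1)C^{(-1)}_{(y_0,x_1,\ldots,y_{t-1},x_t)}\cap\ZZZ^P$.

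The main obstacle I anticipate is the extraction step, i.e.\ justifying the existence of a single nonzero star-product decomposition of $T^\nu$ from its membership in $G_{e-1}(T(B))_e$. This rests on the compatibility of the star product with the $\ZZZ^P$-multigrading and on each multidegree of $B$ being one-dimensional, so that projecting any expression of $T^\nu$ onto the multidegree-$\nu$ component isolates a single nonzero product. Once that structural point is clarified, the telescoping/equality argument is essentially automatic.
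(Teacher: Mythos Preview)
Your proposal is correct and follows essentially the same route as the paper: the inclusion $T(A)\hookrightarrow T(B)$, the multigrading extraction of a monomial star-product decomposition, and the telescoping identity forcing each factor to satisfy the defining equalities $\alpha_j(x_i)-\alpha_j(y_i)=(p^{e_j}-1)\qmodist(x_i,y_i)$. The only simplification the paper makes is to group the product as $T^{\nu'}\ast T^{\nu''}$ with $T^{\nu'}$ the first factor and $T^{\nu''}$ the (nonzero) star-product of the rest, so the telescoping collapses to the single identity $(p^{e'}-1)+p^{e'}(p^{e-e'}-1)=p^e-1$ and your worry about intermediate partial sums being minimal disappears.
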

\begin{proof}
Set $R=R^{(-1)}_{(y_0,x_1,\ldots,y_{t-1},x_t)}$.
Suppose that $T^\nu\in T(R)_e$ and $T^\nu\not\in G_{e-1}(T(R))_e$.
Since $T^\nu\in T(\bigoplus_{n\geq0}\omega^{(-n)}/\mmmm\omega^{(-n)})$
by equation \refeq{eq:fc ne},
it is enough to show that
$T^\nu\not\in G_{e-1}(T(\bigoplus_{n\geq0}\omega^{(-n)}/\mmmm\omega^{(-n)}))_e$.
Assume the contrary.
Then there exist $e'\in\NNN$, $\nu'$ and $\nu''$ such that
$0<e'<e$, 
$T^{\nu'}\in\omega^{(1-p^{e'})}/\mmmm\omega^{(1-p^{e'})}$,
$T^{\nu''}\in\omega^{(1-p^{e-e'})}/\mmmm\omega^{(1-p^{e-e'})}$
and
$T^\nu=T^{\nu'}\ast T^{\nu''}=T^{\nu'}(T^{\nu''})^{p^{e'}}$.
Since
$\nu'(y_i)-\nu'(x_i)\geq \qdist{1-p^{e'}}(x_i,y_i)$,
$\nu''(y_i)-\nu''(x_i)\geq \qdist{1-p^{e-e'}}(x_i,y_i)$,
$\nu(y_i)-\nu(x_i)= \qdist{1-p^{e}}(x_i,y_i)$
and $\nu=\nu'+p^{e'}\nu''$,
we see that
$\nu'(y_i)-\nu'(x_i)= \qdist{1-p^{e'}}(x_i,y_i)$
and
$\nu''(y_i)-\nu''(x_i)= \qdist{1-p^{e-e'}}(x_i,y_i)$
for any $i$.
Therefore,
$T^{\nu'}\in T(R)_{e'}$,
$T^{\nu''}\in T(R)_{e-e'}$
and
$T^{\nu}=T^{\nu'}\ast T^{\nu''}$.
This contradicts to the assumption that $T^\nu\not\in G_{e-1}(T(R))_e$.
\end{proof}

Before going further, we note that, by Remark \ref{rem:ce full}, 
$C^{(-1)}_{(y_0,x_1,\ldots,y_{t-1},x_t)}$ is essentially a
full dimensional convex polytope in
$\RRR^{(P\setminus 
G^{(-1)}_{(y_0,x_1,\ldots, y_{t-1},x_t)})\cup\{y_0,\ldots, y_{t-1}\}}$.

Next we state the following.

\begin{lemma}
\mylabel{lem:max dim tcx}
Assume that $P$ is not pure
and let $y_0$, $x_1$, \ldots, $y_{t-1}$, $x_t$ a
\qmonered\  \scn\ such that
$\dim C^{(-1)}_{(y_0,x_1,\ldots, y_{t-1},x_t)}=
\max_{(y'_0,x'_1,\ldots,y'_{t'-1},x'_{t'})\in
 N^{(-1)}} \dim C^{(-1)}_{(y'_0,x'_1,\ldots, y'_{t'-1},x'_{t'})}$.
Then
$\lim_{p\to\infty}\tcx (R^{(-1)}_{(y_0,x_1,\ldots,y_{t-1},x_t)})=
\dim C^{(-1)}_{(y_0,x_1,\ldots, y_{t-1},x_t)}$.
\end{lemma}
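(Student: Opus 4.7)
The plan is to combine Lemmas \ref{lem:u bound}, \ref{lem:msj5}, and \ref{lem:msj6}. Set $d\define \dim C^{(-1)}_{(y_0,x_1,\ldots,y_{t-1},x_t)}$. The upper bound $\tcx(R^{(-1)}_{(y_0,x_1,\ldots,y_{t-1},x_t)}) \leq d$ is immediate from Lemma \ref{lem:u bound}, since the Krull dimension of $R^{(-1)}_{(y_0,x_1,\ldots,y_{t-1},x_t)}$ equals $d+1$. The substance therefore lies in proving $\liminf_{p\to\infty}\tcx(R^{(-1)}_{(y_0,\ldots,x_t)}) \geq d$.

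I would first use Remark \ref{rem:ce full} to strip away redundant coordinates. Using equation \refeq{eq:nuz fixed} to write $\nu(z)$ for $z \in G^{(-1)}_{(y_0,\ldots,x_t)}$ as affine functions of the remaining coordinates, the polytope $C^{(-1)}_{(y_0,\ldots,x_t)}$ becomes a full-dimensional integral convex polytope $\Delta$ in $\RRR^{(P\setminus G)\cup\{y_0,\ldots,y_{t-1}\}}\cong\RRR^d$, and the eliminated $T_z$'s contribute only a fixed Laurent monomial factor to $R^{(-1)}_{(y_0,\ldots,x_t)}$. Consequently, $R^{(-1)}_{(y_0,\ldots,x_t)}$ is isomorphic as a graded $\KKK$-algebra to the Ehrhart ring $R_\Delta$ and shares its T-complexity, so the lower bound reduces to $\lim_{p\to\infty}\tcx(R_\Delta)\geq d$. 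I would deduce this from the general auxiliary claim: $\lim_{p\to\infty}\tcx(R_\Delta) = d$ for every full-dimensional integral convex polytope $\Delta\subset\RRR^d$ with $d\geq 2$. The claim is established by induction on $d$; the inductive step is immediate from Lemma \ref{lem:msj6} applied to a coordinate projection $\pi:\RRR^d\to\RRR^{d-1}$, since $\pi(\Delta)$ is again a full-dimensional integral polytope and the inductive hypothesis supplies $\lim\tcx(R_{\pi(\Delta)}) = d-1$.

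The main obstacle is the base case of this induction, because Lemma \ref{lem:msj5} covers only polytopes contained in the specific simplex $\{x_i\geq 0,\ \sum x_i\leq d-1\}$, and an arbitrary $\Delta$ need not fit into this simplex after any integer translation. To handle this I plan to revisit the proof of Lemma \ref{lem:msj5}: its essential ingredients are only that $\Delta$ has a lattice interior point and is bounded by some supporting hyperplane $\{f(w)\leq c\}$. Specializing the proof's base-$p$ digit counting to this general hyperplane, the same argument produces for $p\gg 0$ at least $(\lfloor p/d\rfloor)^{d(e-1)}$ lattice points $y\in(p^e-1)\Delta\cap\ZZZ^d$ that cannot be written as $y'+p^{e'}y''$ with $y'\in(p^{e'}-1)\Delta\cap\ZZZ^d$ and $y''\in(p^{e-e'}-1)\Delta\cap\ZZZ^d$ for any $0<e'<e$, which yields $\liminf_{p\to\infty}\tcx(R_\Delta)\geq d$ and closes the proof. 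The maximality hypothesis on $(y_0,\ldots,x_t)$, together with $P$ not being pure, is what ensures $d\geq 1$ and that the essential polytope $\Delta$ is genuinely full-dimensional, so that Remark \ref{rem:ce full} applies and the inductive skeleton above is non-vacuous.
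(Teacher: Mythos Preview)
Your auxiliary claim --- that $\lim_{p\to\infty}\tcx(R_\Delta)=d$ for every full-dimensional integral polytope $\Delta\subset\RRR^d$ with $d\geq2$ --- is false, so the induction cannot get off the ground. Take $\Delta=[0,1]^d$. Then $(p^e-1)\Delta=[0,p^e-1]^d$, and for any $a\in[0,p^e-1]^d$ and any $0<e'<e$ the base-$p$ splitting $a=a'+p^{e'}a''$ with $a'_i\equiv a_i\pmod{p^{e'}}$ always lands $a'\in[0,p^{e'}-1]^d=(p^{e'}-1)\Delta$ and $a''\in(p^{e-e'}-1)\Delta$. Hence $c_e(T(R_\Delta))=0$ for all $e\geq2$ and $\tcx(R_\Delta)=-\infty$, not $d$. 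Your proposed generalization of Lemma~\ref{lem:msj5} to ``any supporting hyperplane $\{f(w)\leq c\}$'' breaks on exactly this example: the residue vector mod $p^{e'}$ always lies in $[0,p^{e'}-1]^d$, so no facet inequality of the cube is ever violated. The digit argument in Lemma~\ref{lem:msj5} works precisely because the bound $\sum x_i\leq d-1$ is strictly smaller than the maximal value $d$ attainable on the cube of residues; this gap is not available for an arbitrary facet.

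The paper's proof uses the maximality hypothesis for much more than ensuring $d\geq1$. It constructs, case by case, a saturated chain $z_0\covered z_1\covered\cdots\covered z_u$ in $P^+$ with $z_0,z_u\in G^{(-1)}$, $z_1,\ldots,z_{u-1}\notin G^{(-1)}$, and --- crucially --- $\dist(z_0,\infty)-\dist(z_u,\infty)\geq2$. This last inequality is exactly what, after the unimodular change $\xi(z_i)=\nu(z_{i-1})-\nu(z_i)+1$, forces the projected image into the simplex $\{\xi_i\geq0,\ \sum\xi_i\leq u-2\}$ required by Lemma~\ref{lem:msj5}. The maximality is used to rule out the possibility $\dist(z_0,\infty)-\dist(z_u,\infty)=1$ (in the case $t=0$ by exhibiting a longer \qmonered\ sequence $z_u,z_0$ that would beat the empty one; in the case $t>0$ by exhibiting a refined sequence ending in some $z_i$). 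The remark following Lemma~\ref{lem:max dim tcx} makes this explicit: without maximality the inequality can fail. So you will need to engage with the combinatorics of $P$ and the maximality assumption to produce a projection that genuinely fits the hypothesis of Lemma~\ref{lem:msj5}; the purely polytope-theoretic shortcut does not exist.
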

\begin{proof}
By Lemmas \ref{lem:msj5} and \ref{lem:msj6}, it is enough to show that there
is a projection 
of Euclidean space
to a coordinate subspace 
whose image $\Delta$ 
of 
$C^{(-1)}_{(y_0,x_1,\ldots, y_{t-1},x_t)}$
satisfies the condition of Lemma \ref{lem:msj5}
under unimodular transformation.
We set $y_t=\infty$.

First consider the case where $t=0$.
Since $P$ is not pure, 
$\dim C^{(-1)}_{()}\geq 1$ by 
Corollary \ref{cor:non min max}.
Thus, 
there is a sequence of elements
$z_0$, $z_1$, \ldots, $z_u$ in $P^+$ such that
$u\geq 2$,
$z_0\covered z_1\covered \cdots\covered z_u$,
$z_0$, $z_u\in G^{(-1)}_{()}$ and
$z_i\not\in G_{()}^{(-1)}$ for $1\leq i\leq u-1$.
Here, we claim that
$$
\dist(z_0,\infty)-\dist(z_u,\infty)\geq2.
$$
Assume the contrary.
Then, since $\dist(z_0,z_u)\geq 2$ and
$\dist(x_0,z_0)+\dist(z_0,\infty)=\dist(x_0,\infty)$,
we see that
\begin{eqnarray*}
&&\dist(x_0,z_0)+\dist(z_0,z_u)+\dist(z_u,\infty)\\
&=&\dist(x_0,\infty)-\dist(z_0,\infty)+\dist(z_0,z_u)+\dist(z_u,\infty)\\
&=&\dist(x_0,\infty)-(\dist(z_0,\infty)-\dist(z_u,\infty))+\dist(z_0,z_u)\\
&>&\dist(x_0,\infty).
\end{eqnarray*}
Since $\dist(x_0,z_0)+\dist(z_0,\infty)=\dist(x_0,z_u)+\dist(z_u,\infty)=\dist(x_0,\infty)$,
we see
by the above inequality
 that $z_0$, $z_u\not\in\{x_0,\infty\}$, i.e.
$z_u$, $z_0$ is a \scn.
Further, since
\begin{eqnarray*}
&&\qmone(x_0,z_u,z_0,\infty)\\
&=&-\dist(x_0,z_u)+\dist(z_0,z_u)-\dist(z_0,\infty)\\
&=&\dist(z_u,\infty)-\dist(x_0,\infty)+\dist(z_0,z_u)+\dist(x_0,z_0)-\dist(x_0,\infty)\\
&>&-\dist(x_0,\infty)\\
&=&\qmone\dist(x_0,\infty),
\end{eqnarray*}
$z_u$, $z_0$ is a \qmonered\ \scn.

Since $z_0$, $z_u\in G_{()}^{(-1)}$, we see that
$G^{(-1)}_{(z_u,z_0)}\subset G^{(-1)}_{()}$.
Thus, we see that
$$
\dim C^{(-1)}_{(z_u,z_0)}=
\#(P\setminus G^{(-1)}_{(z_u,z_0)})+1>
\#(P\setminus G^{(-1)}_{()})=\dim C_{()}^{(-1)}.
$$
This contradicts to the assumption.
Therefore,
$$
\dist(z_0,\infty)-\dist(z_u,\infty)\geq 2.
$$

Consider 
the image of $C^{(-1)}_{()}$ of composition of
the projection (restriction) $\RRR^P\to\RRR^{\{z_1,\ldots, z_{u-1}\}}$
and the transformation
$\xi(z_i)=\nu(z_{i-1})-\nu(z_i)+1$ for $1\leq i\leq u-1$.
This transformation $\nu\mapsto\xi$ is unimodular since it is a composition of a 
parallel translation
and a linear transformation whose representation matrix is an upper triangular matrix
with diagonal entries $-1$.
Note that $\nu(z_0)$ is independent of $\nu$
and therefore $\xi(z_1)$, \ldots, $\xi(z_{u-1})$ are defined by $\nu(z_1)$, \ldots, $\nu(z_{u-1})$.
Further, $\xi(z_i)\geq 0$ for $1\leq i\leq u-1$,
since
 by the definition of $C^{(-1)}_{()}$, $\nu(z_{i-1})-\nu(z_i)\geq-1$.
Moreover,
\begin{eqnarray*}
&&\sum_{i=1}^{u-1}\xi(z_i)\\
&=&\nu(z_0)-\nu(z_{u-1})+u-1\\
&\leq&\nu(z_0)-\nu(z_{u-1})+u-1+\nu(z_{u-1})-\nu(z_u)+1\\
&=&\qmone\dist(z_0,\infty)-\qmone\dist(z_u,\infty)+u\\
&\leq&u-2,
\end{eqnarray*}
since 
$\nu(z_0)=\qmone\dist(z_0,\infty)$,
$\nu(z_u)=\qmone\dist(z_u,\infty)$ and $\dist(z_0,\infty)-\dist(z_u,\infty)\geq 2$.
Therefore, the image 
of $C^{(-1)}_{()}$ 
by the composition of the projection and the above unimodular transformation 
satisfies the assumption of Lemma \ref{lem:msj5}.

Next consider the case where $t>0$.
First note that
$$
\dist(x_t,y_{t-1})+\dist(y_{t-1},\infty)>\dist(x_t,\infty).
$$
In fact, since $y_0$, $x_1$, \ldots, $y_{t-1}$, $x_t$ is a
\qmonered\ \scn, we see that
$$
\qmone(x_{t-1},y_{t-1},x_t,\infty)>\qmone\dist(x_{t-1},\infty),
$$
i.e.,
$$
\dist(x_{t-1},y_{t-1})-\dist(x_t,y_{t-1})+\dist(x_t,\infty)<\dist(x_{t-1},\infty).
$$
Since $\dist(x_{t-1},\infty)\leq\dist(x_{t-1},y_{t-1})+\dist(y_{t-1},\infty)$,
we see the inequality above.
In particular, $y_{t-1}\not\in G^{(-1)}_{t, (y_0,x_1,\ldots,y_{t-1},x_t)}$.

Now set $\ell=\dist(x_t,y_{t-1})$ and take elements $z_0$, $z_1$, \ldots, $z_\ell$
such that
$$
x_t=z_0\covered z_1\covered\cdots\covered z_\ell=y_{t-1}.
$$
We claim that $z_i\not\in G_{t,(y_0,x_1,\ldots, y_{t-1},x_t)}^{(-1)}$ for $1\leq i\leq \ell$.
Assume the contrary and take $i$ with $z_i\in G_{t,(y_0,x_1,\ldots,y_{t-1},x_t)}^{(-1)}$.
Then $i<\ell$ since $y_{t-1}\not\in G_{t,(y_0,x_1,\ldots, y_{t-1},x_t)}^{(-1)}$.
Therefore, it is easily verified that
$y_0$, $x_1$, \ldots, $x_{t-1}$, $y_{t-1}$, $z_i$ is a \qmonered\ \scn.
It is also verified that
$$
G_{(y_0,x_1,\ldots, y_{t-1},z_i)}^{(-1)}\subsetneq
G_{(y_0,x_1,\ldots, y_{t-1},x_t)}^{(-1)}.
$$
This contradicts to the maximality of
$\dim C_{(y_0,x_1,\ldots, y_{t-1},x_t)}^{(-1)}$.
Thus $z_i\not\in G_{t,(y_0,x_1,\ldots, y_{t-1},x_t)}^{(-1)}$ for $1\leq i\leq \ell$.
We see that $z_i\not\in G_{t-1,(y_0,x_1,\ldots, y_{t-1},x_t)}^{(-1)}$ for $0\leq i\leq \ell-1$
by the same way.
Therefore,
$$
z_i\not\in G_{(y_0,x_1,\ldots, y_{t-1},x_t)}^{(-1)}\quad\mbox{for $1\leq i\leq\ell-1$},
$$
since $y_0$, $x_1$, \ldots, $y_{t-1}$, $x_t$ satisfies \condn.

Now take elements $z_{\ell+1}$, \ldots, $z_{u-1}$, $z_u$ such that
$$
y_{t-1}=z_\ell\covered z_{\ell+1}\covered\cdots\covered z_{u-1}\covered z_u,
$$
$z_i\not\in G_{t,(y_0,x_1,\ldots, y_{t-1},x_t)}^{(-1)}$ for $\ell+1\leq i\leq u-1$
and
$z_u\in G_{t,(y_0,x_1,\ldots, y_{t-1},x_t)}^{(-1)}$
($u$ may equal to $\ell+1$).
Then $z_i\not\in G_{(y_0,x_1,\ldots, y_{t-1}, x_t)}^{(-1)}$ for $\ell+1\leq i\leq u-1$,
since $y_0$, $x_1$, \ldots, $y_{t-1}$, $x_t$ satisfies \condn.

Since $z_0=x_t$, $\dist(x_t,z_u)+\dist(z_u,\infty)=\dist(x_t,\infty)$ and $z_0$ is not 
covered by $z_u$, we see that
$$
\dist(z_0,\infty)-\dist(z_u,\infty)\geq 2.
$$
Therefore, we see by the same argument as in the case where $t=0$, that the image
of $C^{(-1)}_{(y_0,x_1,\ldots, y_{t-1},x_t)}$ 
of the composition of projection
$\RRR^P\to\RRR^{\{z_1, \ldots, z_{u-1}\}}$
and the same unimodular transformation as in the case where $t=0$ 
satisfies the condition of Lemma \ref{lem:msj5}.
\end{proof}

\begin{remark}\rm
Consider $P_1$ of Example \ref{ex:anal spread}.
$x$, $y\in G_{()}^{(-1)}$,
$\dist(x,\infty)-\dist(y,\infty)=1$ and $\dist(x,y)=2$.
Therefore,
$\dist(x,\infty)-\dist(y,\infty)
=\dist(x,y)$
does not hold in general for $x$, $y\in G^{(-1)}_{()}$ with $x<y$.
Thus, we need to prove 
$\dist(z_0,\infty)-\dist(z_u,\infty)\geq 2$
in the case of $t=0$ of the proof of Lemma \ref{lem:max dim tcx}.
In fact, $\dim C_{()}^{(-1)}=
\max_{(y'_0,x'_1,\ldots,y'_{t'-1},x'_{t'})\in
 N^{(-1)}} \dim C^{(-1)}_{(y'_0,x'_1,\ldots, y'_{t'-1},x'_{t'})}$
is essential.
\end{remark}

\begin{example}
\rm
Consider $P_1$ of Example \ref{ex:anal spread}.
There are following 3 minimal elements
$$
\vcenter{%
\unitlength=.005\textwidth\relax
\begin{picture}(40,33)

\put(20,5){\circle*{3}}
\put(10,10){\line(2,-1){10}}
\put(20,5){\line(2,1){10}}

\put(10,10){\circle*{3}}

\put(10,30){\circle*{3}}
\put(20,20){\circle*{3}}
\put(30,10){\circle*{3}}
\put(30,30){\circle*{3}}

\put(10,10){\line(0,1){20}}
\put(10,30){\line(1,-1){20}}
\put(30,10){\line(0,1){20}}

\put(9,31){\makebox(0,0)[br]{$0$}}
\put(9,9){\makebox(0,0)[tr]{$-1$}}
\put(21,21){\makebox(0,0)[bl]{$-1$}}
\put(31,9){\makebox(0,0)[tl]{$-2$}}
\put(31,31){\makebox(0,0)[bl]{$-1$}}
\put(20,4){\makebox(0,0)[t]{$-2$}}

\end{picture}
\hfil
\begin{picture}(40,33)

\put(20,5){\circle*{3}}
\put(10,10){\line(2,-1){10}}
\put(20,5){\line(2,1){10}}

\put(10,10){\circle*{3}}

\put(10,30){\circle*{3}}
\put(20,20){\circle*{3}}
\put(30,10){\circle*{3}}
\put(30,30){\circle*{3}}

\put(10,10){\line(0,1){20}}
\put(10,30){\line(1,-1){20}}
\put(30,10){\line(0,1){20}}

\put(31,31){\makebox(0,0)[bl]{$-1$}}
\put(31,9){\makebox(0,0)[tl]{$-2$}}
\put(21,21){\makebox(0,0)[bl]{$-1$}}
\put(9,31){\makebox(0,0)[br]{$-1$}}
\put(9,9){\makebox(0,0)[tr]{$-2$}}
\put(20,4){\makebox(0,0)[t]{$-3$}}

\end{picture}
\hfil
\begin{picture}(40,33)

\put(20,5){\circle*{3}}
\put(10,10){\line(2,-1){10}}
\put(20,5){\line(2,1){10}}

\put(10,10){\circle*{3}}

\put(10,30){\circle*{3}}
\put(20,20){\circle*{3}}
\put(30,10){\circle*{3}}
\put(30,30){\circle*{3}}

\put(10,10){\line(0,1){20}}
\put(10,30){\line(1,-1){20}}
\put(30,10){\line(0,1){20}}

\put(31,31){\makebox(0,0)[bl]{$-1$}}
\put(31,9){\makebox(0,0)[tl]{$-2$}}
\put(21,21){\makebox(0,0)[bl]{$-2$}}
\put(9,31){\makebox(0,0)[br]{$-1$}}
\put(9,9){\makebox(0,0)[tr]{$-2$}}
\put(20,4){\makebox(0,0)[t]{$-3$}}

\end{picture}
}
$$
of $\TTTTT^{(-1)}(P)$.
These are the vertices of $C^{(-1)}_{(y,x)}$.
Thus, projection of $C^{(-1)}_{(y,x)}$
to $\RRR^{\{y,z\}}$
is a rectangular equilateral triangle with normalized volume $1$.
\end{example}

Now we state the following.

\begin{thm}
\mylabel{thm:f comp}
If $\rkh$ is not \gor, then
$$
\lim_{p\to\infty}\cxf(\rkh)=\dim(\bigoplus_{n\geq0}\omega^{(-n)}/\mmmm\omega^{(-n)})-1.
$$
\end{thm}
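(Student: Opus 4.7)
The plan is to identify $\cxf(\rkh)$ with the T-complexity of the anticanonical fiber cone $A\define\bigoplus_{n\geq 0}\omega^{(-n)}/\mmmm\omega^{(-n)}$, then sandwich $\tcx(A)$ between the dimension bound of Lemma~\ref{lem:u bound} and the Ehrhart-ring lower bound provided by Lemmas~\ref{lem:ce max} and \ref{lem:max dim tcx}. First, by Definition~\ref{def:cxf} and Remark~\ref{rem:completion}, I may pass to the $\mmmm$-adic completion of $\rkh$, which is a \cm, normal, complete local ring of characteristic $p$ with the same quotients $\omega^{(-n)}/\mmmm\omega^{(-n)}$ and the same Frobenius complexity. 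Fact~\ref{fact:kssz} then gives $\FFFFF(E)\cong T(\RRRRR)$, where $\RRRRR=\bigoplus_{n\geq 0}\omega^{(-n)}$ is the anticanonical cover, so $\cxf(\rkh)=\tcx(\RRRRR)$. Since $T(\RRRRR)$ is a \dwfg\ strong left $\rkh$-skew algebra, Remark~\ref{rem:mod cx} yields $c_e(T(\RRRRR))=c_e(T(\RRRRR)/\mmmm T(\RRRRR))$ for every $e$; and a direct inspection of the Frobenius-twisted product $a\ast b=ab^{p^e}$ (using commutativity of $\RRRRR$ and characteristic $p$ to handle $b^{p^e}$ modulo $\mmmm$) identifies $T(\RRRRR)/\mmmm T(\RRRRR)$ with $T(A)$ as graded rings, so $\cxf(\rkh)=\tcx(A)$.

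Next I would establish the upper bound $\tcx(A)\leq\dim A-1$. By Theorem~\ref{thm:omega n basis} symbolic powers of $\omega^{(-1)}$ coincide with ordinary powers, so $\RRRRR$ is the ordinary Rees algebra of $\omega^{(-1)}$ over $\rkh$ and is therefore Noetherian; consequently $A$ is a Noetherian $\NNN$-graded $\KKK$-algebra with $A_0=\KKK$, and Lemma~\ref{lem:u bound} delivers the desired bound uniformly in $p$.

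For the matching lower bound, I pick a \qmonered\ \scn\ $(y_0,x_1,\ldots,y_{t-1},x_t)\in N^{(-1)}$ realizing $\max_{(y'_0,\ldots,x'_{t'})\in N^{(-1)}}\dim C^{(-1)}_{(y'_0,\ldots,x'_{t'})}$; by Theorem~\ref{thm:can anal spr} this maximum equals $\dim A-1$. Lemma~\ref{lem:ce max} gives $c_e(T(A))\geq c_e(T(R^{(-1)}_{(y_0,\ldots,x_t)}))$ for every $e\geq 1$, hence $\tcx(A)\geq\tcx(R^{(-1)}_{(y_0,\ldots,x_t)})$. Since $\rkh$ is not \gor, Hibi's purity criterion forces $P$ to be non-pure, so Lemma~\ref{lem:max dim tcx} applies and yields $\lim_{p\to\infty}\tcx(R^{(-1)}_{(y_0,\ldots,x_t)})=\dim C^{(-1)}_{(y_0,\ldots,x_t)}=\dim A-1$. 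Combined with the upper bound this gives $\lim_{p\to\infty}\tcx(A)=\dim A-1$, and therefore $\lim_{p\to\infty}\cxf(\rkh)=\dim A-1$.

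The step I expect to be most delicate is the identification $T(\RRRRR)/\mmmm T(\RRRRR)\cong T(A)$ in the first paragraph: one must confirm that $\mmmm T(\RRRRR)$ is a genuine two-sided ideal (which follows from the strong skew property applied to $\mmmm$) and verify that reducing modulo $\mmmm$ commutes with the $p^e$-th-power operation inside the twisted product. Once this identification is in place, the remainder of the proof is tight bookkeeping that combines Lemmas~\ref{lem:u bound}, \ref{lem:ce max}, and \ref{lem:max dim tcx} with the polytopal description of the anticanonical analytic spread from Theorem~\ref{thm:can anal spr}.
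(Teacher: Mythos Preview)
Your proposal is correct and follows essentially the same route as the paper's proof: reduce $\cxf(\rkh)$ to the T-complexity of the fiber cone via Fact~\ref{fact:kssz} and Remarks~\ref{rem:mod cx}, \ref{rem:completion}, bound above by Lemma~\ref{lem:u bound}, and bound below via Theorem~\ref{thm:can anal spr} together with Lemmas~\ref{lem:ce max} and \ref{lem:max dim tcx}, using the non-\gor\ hypothesis exactly to invoke non-purity of $P$. The step you flag as delicate---identifying $T(\RRRRR)/\mmmm T(\RRRRR)$ with $T(A)$---is handled in the paper simply by the citation of Remark~\ref{rem:mod cx}, and your unpacking of it (that $A=\RRRRR/\mmmm\RRRRR$ is a ring quotient so the Frobenius-twisted product descends) is exactly the intended justification.
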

\begin{proof}
By Fact \ref{fact:kssz}, Remarks \ref{rem:mod cx} and \ref{rem:completion},
we see that
$$
\cxf(\rkh)=\tcx(T(\bigoplus_{n\geq0}\omega^{(-n)}/\mmmm\omega^{(-n)})).
$$
On the other hand, 
By Lemma \ref{lem:u bound}, we see that
$$
\tcx(T(\bigoplus_{n\geq0}\omega^{(-n)}/\mmmm\omega^{(-n)}))
\leq\dim(\bigoplus_{n\geq0}\omega^{(-n)}/\mmmm\omega^{(-n)})-1
$$
for any $p$.
Further,
by Theorem \ref{thm:can anal spr}, Lemmas \ref{lem:ce max}
and \ref{lem:max dim tcx}, we see that
$$
\lim_{p\to\infty}
\tcx(T(\bigoplus_{n\geq0}\omega^{(-n)}/\mmmm\omega^{(-n)}))
\geq\dim(\bigoplus_{n\geq0}\omega^{(-n)}/\mmmm\omega^{(-n)})-1.
$$
\end{proof}

\end{document}